\title{A Nullstellensatz for sequences over $\mathbb{F}_p$}
\author{\'{E}ric Balandraud}
\author{Benjamin Girard}
\thanks{\textit{2010 Mathematics Subject Classification:} 11D04, 11T06, 11D45, 11P70.}
\thanks{IMJ, \'{E}quipe Combinatoire et Optimisation, Universit\'{e} Pierre et Marie Curie~(Paris $6$), $4$ place Jussieu, 75005 Paris, France, email: \texttt{eric.balandraud@imj-prg.fr|benjamin.girard@imj-prg.fr}. 
Research supported by the French ANR Project "CAESAR" No. ANR-12-BS01-0011.}
\newtheorem{theo}{Theorem}[section]
\newtheorem{lem}[theo]{Lemma}
\newtheorem{prop}[theo]{Proposition}
\newtheorem{cor}[theo]{Corollary}
\def\le{\leqslant}
\def\ge{\geqslant}
\begin{document}

\begin{abstract} 
Let $p$ be a prime and let $A=(a_1,\dots,a_\ell)$ be a sequence of nonzero elements in $\mathbb{F}_p$. 
In this paper, we study the set of all $0$-$1$ solutions to the equation
$$a_1x_1 + \dots + a_\ell x_\ell = 0.$$
We prove that whenever $\ell \ge p$, this set actually characterizes $A$ up to a nonzero multiplicative constant, which is no longer true for $\ell < p$. 
The critical case $\ell=p$ is of particular interest. 
In this context, we prove that whenever $\ell=p$ and $A$ is nonconstant, the above equation has at least $p-1$ minimal $0$-$1$ solutions, thus refining a theorem of Olson.
The subcritical case $\ell=p-1$ is studied in detail also.
Our approach is algebraic in nature and relies on the Combinatorial Nullstellensatz as well as on a Vosper type theorem. 
\end{abstract}

\maketitle

\section{Introduction}
The study of the existence and variety of $0$-$1$ solutions to linear equations over a finite Abelian group is a central topic in zero-sum combinatorics.
Promoted by applications in algebraic number theory, many results have been proved in this area since the seminal paper of Erd\H{o}s, Ginzburg and Ziv \cite{EGZ61}. 
The interested reader is referred to \cite{GeroRuzsa09,GeroKoch05} for an exposition and many references on this subject.

\medskip
Let $p$ be a prime and let $A=(a_1,\dots,a_\ell)$ be a sequence of nonzero elements in $\mathbb{F}_p$. 
In this paper, we study the set $\mathcal{S}_A$ of all $0$-$1$ solutions to the equation
\begin{equation}
\label{maineq}
a_1x_1 + \dots + a_\ell x_\ell = 0.
\end{equation}
Equivalently, one has
$$\mathcal{S}_A = A^{\perp} \cap \{0,1\}^\ell$$ 
where $A^{\perp}=\{x \in \mathbb{F}^{\ell}_p : A \cdot x =0\}$ and $A \cdot x$ denotes the usual inner product of $A$ and $x$ over $\mathbb{F}^{\ell}_p$.
This very definition readily makes of $\mathcal{S}_A$ an ambivalent object that can be expressed as the intersection of an algebraic structure, namely a hyperplane of $\mathbb{F}^{\ell}_p$, and the $\ell$-cube $\{0,1\}^\ell$, which has a more combinatorial flavor. 

\medskip
Seen as a set of vectors, $\mathcal{S}_A$ will be studied through the subspace $\langle \mathcal{S}_A \rangle$ of $A^{\perp}$. 
For this reason, we will set  
$$\dim(A)=\dim(\langle S_A \rangle).$$
In particular, since $\dim(A^{\perp})=\ell-1$, it can easily be noticed that $\dim(A) \le \ell - 1$ always holds.
Alternatively, every element of $\mathcal{S}_A$ can be seen as a subset of $[1,\ell]$. 
Therefore, it is not surprising that $\mathcal{S}_A$ also has set theoretic properties. 
For instance, $\mathcal{S}_A$ is easily seen to be closed under disjoint union. 
It is also closed under complement if and only if the sum $\sigma(A)$ of all elements in $A$ is equal to zero. 
In what follows, we will be particularly interested in the number of \em minimal \em elements in $\mathcal{S}_A \setminus \{0\}$ ordered by inclusion.

\medskip
In this paper, we address three problems which appear to be closely related. 
First, we determine the value of $\dim(A)$ for any sequence $A$ of $\ell \ge p-1$ nonzero elements in $\mathbb{F}_p$. 
Then, we apply our results to solve the reconstruction problem of knowing whether $\mathcal{S}_A$ characterizes $A$ up to a nonzero multiplicative constant. 
Finally, we study the number of minimal elements in $\mathcal{S}_A$.

\medskip
The main idea behind our results is the following. 
For every $\ell \ge p-1$, the equality $\dim(A)=\ell-1$ holds true except for a very limited number of exceptional sequences which can be fully determined.
Our first theorem deals with large values of $\ell$, namely $\ell > p$.

\begin{theo}
\label{main theorem long >p} 
Let $p$ be a prime and let $A=(a_1,\dots,a_\ell)$ be a sequence of $\ell > p$ nonzero elements in $\mathbb{F}_p$. 
Then
$$\dim(A)=\ell-1.$$
\end{theo}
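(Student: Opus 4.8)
The plan is to recast the statement as a \emph{reconstruction} problem and prove it in the contrapositive. Since $\mathcal{S}_A \subseteq A^\perp$ we always have $\langle \mathcal{S}_A\rangle \subseteq A^\perp$, so $\dim(A)\le \ell-1$, and equality holds if and only if $\langle \mathcal{S}_A\rangle^\perp = (A^\perp)^\perp = \langle A\rangle$. Thus it suffices to show that any vector $b=(b_1,\dots,b_\ell)$ satisfying $b\cdot x=0$ for every $x\in\mathcal{S}_A$ is a scalar multiple of $A$. Unwinding the definition of $\mathcal{S}_A$, this hypothesis says precisely that for every subset $I\subseteq[1,\ell]$ one has the implication $\sum_{i\in I}a_i=0 \Rightarrow \sum_{i\in I}b_i=0$; that is, every zero-sum subsequence of $A$ is also a zero-sum subsequence of $b$. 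The goal is to deduce $b=\lambda A$ for some $\lambda\in\mathbb{F}_p$.

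The first step handles indices carrying the same value. I would show that $b_i=b_j$ whenever $a_i=a_j$. Here the hypothesis $\ell>p$ enters for the first time: removing the two coordinates $i$ and $j$ leaves $\ell-2\ge p-1$ nonzero elements, and a standard iterated application of the Cauchy--Davenport inequality shows that the subset sums of any $p-1$ nonzero elements of $\mathbb{F}_p$ already cover all of $\mathbb{F}_p$. Hence there is a set $J\subseteq[1,\ell]\setminus\{i,j\}$ with $\sum_{k\in J}a_k=-a_i$, so that both $I=J\cup\{i\}$ and $I'=J\cup\{j\}$ are zero-sum (using $a_i=a_j$). Their indicator vectors lie in $\mathcal{S}_A$, and $\mathbf{1}_I-\mathbf{1}_{I'}=e_i-e_j\in\langle \mathcal{S}_A\rangle$, forcing $b_i=b_j$. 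Consequently $b_i$ depends only on the value $a_i$, so I may write $b_i=\beta(a_i)$ for a function $\beta$ defined on the value set $V\subseteq\mathbb{F}_p^*$ of $A$. When $A$ is constant this already yields $b=\beta(a_1)A$, so the remaining content is the interaction between distinct values.

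The crux is therefore to prove that $\beta(v)=\lambda v$ for all $v\in V$, i.e.\ that the slopes $\beta(v)/v$ coincide. I would encode this geometrically: to each index associate the pair $P_i=(a_i,\beta(a_i))\in\mathbb{F}_p^2$, and consider the sumset $S=\sum_{i=1}^{\ell}\{0,P_i\}$, whose elements are exactly the vectors $\big(\sum_{i\in I}a_i,\ \sum_{i\in I}\beta(a_i)\big)$. Projecting to the first coordinate recovers the subset sums of $A$, which cover $\mathbb{F}_p$ by the step above; our standing hypothesis says that the fibre of $S$ over first coordinate $0$ is reduced to $\{(0,0)\}$. The \textbf{main obstacle} is to show that this degenerate fibre is impossible \emph{unless} all the $P_i$ lie on a common line through the origin, which (since each $a_i\ne 0$) is exactly the desired collinearity $\beta(v)=\lambda v$. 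I expect to argue this by a trading/perturbation mechanism powered again by Cauchy--Davenport in $\mathbb{F}_p^2$: starting from a zero-sum set and replacing one index of value $u$ by a collection of other indices with the same $a$-sum but a different $\beta$-sum produces a second zero-sum set whose $b$-sum differs, contradicting the hypothesis. Making this robust for values of small multiplicity---so that the relations obtained genuinely connect all of $V$---is the delicate point, and it is precisely the extra room afforded by $\ell>p$ (rather than merely $\ell\ge p-1$, which only guarantees the first-coordinate covering) that should force the fibre over $0$ to grow and thereby close the argument.
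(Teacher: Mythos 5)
Your framing is the same as the paper's (reduce $\dim(A)=\ell-1$ to showing $\langle\mathcal{S}_A\rangle^\perp=\langle A\rangle$), and your first step is correct: since $\ell-2\ge p-1$, iterated Cauchy--Davenport makes the subset sums of $A$ with coordinates $i,j$ removed cover all of $\mathbb{F}_p$, and the two zero-sum sets $J\cup\{i\}$, $J\cup\{j\}$ force $b_i=b_j$ whenever $a_i=a_j$. But that step is not where the difficulty lies, and the crux of your argument is missing. You reduce everything to the claim that the planar sumset $S=\sum_{i}\{0,P_i\}$, with $P_i=(a_i,b_i)\in\mathbb{F}_p^2$, cannot have fibre $\{(0,0)\}$ over $0$ unless all $P_i$ lie on a line through the origin, and you propose to prove it by ``Cauchy--Davenport in $\mathbb{F}_p^2$'' together with a trading mechanism --- while explicitly conceding that making this robust is ``the delicate point.'' That delicate point is precisely the theorem. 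Two concrete reasons why your mechanism, as described, does not close: (a) Cauchy--Davenport is false in $\mathbb{F}_p^2$ (the group has proper subgroups); the correct substitute is Kneser's theorem, and in the only nontrivial case (trivial stabilizer) Kneser gives merely $|S|\ge\ell+1\ge p+2$, which is far too weak to force $S$ to meet the punctured vertical line $\{0\}\times\mathbb{F}_p^*$ --- a set of $p-1$ points inside a plane of $p^2$ points. (b) Grouping indices by the slope $\lambda=b_i/a_i$ into classes with subset-sum sets $\Sigma_1,\dots,\Sigma_d$, the relations produced by simple two-class trades are exactly the paper's Lemma \ref{x-x}, $\Sigma_i\cap(-\Sigma_j)=\{0\}$; these pairwise constraints settle $d=2$ (pigeonhole: $|\Sigma_1|+|\Sigma_2|\ge\ell+2>p+1$ forces a second common point), but for $d\ge3$ they are insufficient, since as abstract constraints they permit $|\Sigma_i|=\tfrac{p+1}{2}$ for all $i$, hence $\sum_i(|\Sigma_i|-1)=\tfrac{3(p-1)}{2}>p$, with every pairwise condition satisfied.

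What is needed is an exploitation of the \emph{joint} condition --- whenever $x_1+\dots+x_d=0$ with $x_i\in\Sigma_i$, then also $\lambda_1x_1+\dots+\lambda_dx_d=0$ --- and this is exactly what the paper's Theorem \ref{prop1} supplies: applying the Combinatorial Nullstellensatz to the polynomial $\left(\sum_i\lambda_iX_i\right)\left(\left(\sum_iX_i\right)^{p-1}-1\right)$, which vanishes on $\prod_i\Sigma_i$ by that joint condition, it yields the upper bound $\sum_{i=1}^d(|\Sigma_i|-1)\le p$ for $d\ge2$. Combined with the lower bound $\sum_{i=1}^d(|\Sigma_i|-1)\ge\ell>p$ (from Lemma \ref{x-x} and one-dimensional Cauchy--Davenport), this forces $d=1$. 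Your proposal identifies the right statement to prove but replaces this polynomial step with a hope; until you supply an argument of comparable strength for $d\ge3$, the proof is incomplete.
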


Consequently, whenever $A$ is a sequence of $\ell > p$ nonzero elements in $\mathbb{F}_p$, the hyperplane $A^{\perp}$ has a basis consisting solely of elements in $\mathcal{S}_A$, and is thus fully characterized by its intersection with the $\ell$-cube. This solves the reconstruction problem for large values of $\ell$.

\begin{cor} 
\label{long >p characterisation}
Let $p$ be a prime. 
Let also $A,B$ be two sequences of $\ell > p$ elements in $\mathbb{F}_p$ such that the elements of $A$ are nonzero.
Then $\mathcal{S}_A \subset \mathcal{S}_B$ if and only if $A$ and $B$ are collinear. 
\end{cor}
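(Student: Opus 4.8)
The plan is to translate the combinatorial inclusion $\mathcal{S}_A \subseteq \mathcal{S}_B$ into a linear-algebraic inclusion of hyperplanes, after which the statement becomes a short exercise; the genuine input is that $\mathcal{S}_A$ linearly spans the whole hyperplane $A^{\perp}$, which is exactly the content of Theorem \ref{main theorem long >p} and which I take for granted. I would first dispose of the easy implication. If $A$ and $B$ are collinear, say $B=\lambda A$ for some $\lambda \in \mathbb{F}_p$, then $B \cdot x = \lambda (A \cdot x)$ for every $x$, so that $A \cdot x = 0$ forces $B \cdot x = 0$. In particular every $0$-$1$ solution of \eqref{maineq} relative to $A$ is a $0$-$1$ solution relative to $B$, which is precisely $\mathcal{S}_A \subseteq \mathcal{S}_B$. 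Note that this direction needs no hypothesis on $\ell$ and also covers the degenerate case $\lambda = 0$.

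For the converse, assume $\mathcal{S}_A \subseteq \mathcal{S}_B$. Since $\mathcal{S}_B \subseteq B^{\perp}$ and $B^{\perp}$ is a subspace of $\mathbb{F}^{\ell}_p$, the inclusion $\mathcal{S}_A \subseteq B^{\perp}$ passes to the linear span, giving $\langle \mathcal{S}_A \rangle \subseteq B^{\perp}$. This is where Theorem \ref{main theorem long >p} enters: because $A$ consists of $\ell > p$ nonzero elements, we have $\dim(A)=\ell-1=\dim(A^{\perp})$, and as $\langle \mathcal{S}_A \rangle \subseteq A^{\perp}$ holds in general, the two hyperplanes must coincide, i.e. $\langle \mathcal{S}_A \rangle = A^{\perp}$. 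Combining the two inclusions then yields $A^{\perp} \subseteq B^{\perp}$.

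It remains to read off collinearity from $A^{\perp} \subseteq B^{\perp}$. Writing $A^{\perp}=\langle A\rangle^{\perp}$ and $B^{\perp}=\langle B\rangle^{\perp}$ and using that the standard bilinear form on $\mathbb{F}^{\ell}_p$ is nondegenerate, taking orthogonal complements reverses the inclusion to $\langle B\rangle \subseteq \langle A\rangle$. Since the entries of $A$ are nonzero we have $A \neq 0$, so $\langle A\rangle$ is a line and $B \in \langle A\rangle$ means $B=\lambda A$ for some scalar $\lambda$, i.e. $A$ and $B$ are collinear. The only points to keep an eye on are this final identification and the convention that \emph{collinear} is understood to allow $B=0$; beyond that the argument is routine, the real difficulty having been absorbed into Theorem \ref{main theorem long >p}.
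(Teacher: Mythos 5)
Your proof is correct and follows essentially the same route as the paper: the paper derives this corollary directly from Theorem \ref{main theorem long >p} via the observation (made explicit in Section \ref{section : long >p}) that $\dim(A)=\ell-1$ forces $\langle \mathcal{S}_A\rangle = A^{\perp}$, so that $\mathcal{S}_A \subset \mathcal{S}_B$ gives $A^{\perp}\subseteq B^{\perp}$ and hence, by duality, collinearity. Your write-up simply spells out the linear-algebra details (spanning, double orthogonal complement, the $B=0$ convention) that the paper leaves as "straightforward."
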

Theorem \ref{main theorem long >p} also implies a lower bound on the number of minimal elements in $\mathcal{S}_A$.  
Indeed, one can notice that any basis of $A^{\perp}$ consisting of elements in $\mathcal{S}_A$ can be turned into a basis of $A^{\perp}$ consisting exclusively of minimal elements in $\mathcal{S}_A$ (see Proposition \ref{mimimini}). This yields the following result.

\begin{cor} 
\label{long >p minimales}
Let $p$ be a prime and let $A=(a_1,\dots,a_\ell)$ be a sequence of $\ell > p$ nonzero elements in $\mathbb{F}_p$. 
Then $\mathcal{S}_A$ contains at least $\ell-1$ minimal elements.
\end{cor}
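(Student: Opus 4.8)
The plan is to deduce this statement directly from Theorem~\ref{main theorem long >p} together with Proposition~\ref{mimimini}, since the substantive work has already been carried out in establishing the dimension formula. First I would recall that, by definition, $\dim(A)=\dim(\langle \mathcal{S}_A\rangle)$ and that $\langle \mathcal{S}_A\rangle$ is a subspace of the hyperplane $A^{\perp}$, whose dimension equals $\ell-1$. Theorem~\ref{main theorem long >p} asserts that $\dim(A)=\ell-1$ whenever $\ell>p$; comparing dimensions then forces the equality $\langle \mathcal{S}_A\rangle=A^{\perp}$. In other words, the $0$-$1$ vectors lying in $A^{\perp}$ already span the whole hyperplane.

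Next I would extract, from the spanning set $\mathcal{S}_A$, a basis $(v_1,\dots,v_{\ell-1})$ of $A^{\perp}$ consisting of elements of $\mathcal{S}_A$; this is possible precisely because $\mathcal{S}_A$ generates $A^{\perp}$ and $\dim(A^{\perp})=\ell-1$. Each $v_i$ is a nonzero element of $\mathcal{S}_A$, hence corresponds to a nonempty subset of $[1,\ell]$. Applying Proposition~\ref{mimimini} to this basis, I obtain a new basis $(w_1,\dots,w_{\ell-1})$ of $A^{\perp}$ in which every $w_i$ is a minimal element of $\mathcal{S}_A\setminus\{0\}$. Being a basis, the vectors $w_1,\dots,w_{\ell-1}$ are linearly independent, and in particular pairwise distinct; they therefore exhibit at least $\ell-1$ distinct minimal elements of $\mathcal{S}_A$, which is exactly the claimed bound.

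The only point requiring a little care is the passage from ``a basis of minimal elements exists'' to ``there are at least $\ell-1$ minimal elements'': this is immediate once one notes that linearly independent vectors cannot coincide, so the $\ell-1$ members of the basis are genuinely distinct minimal elements. I do not expect any serious obstacle here, since the real difficulties are confined to Theorem~\ref{main theorem long >p} (the dimension computation) and to Proposition~\ref{mimimini} (the replacement of an arbitrary $\mathcal{S}_A$-basis by one made of minimal elements, which amounts to shrinking each basis vector to a minimal support inside it while preserving linear independence). Granting these two inputs, the corollary follows in a few lines.
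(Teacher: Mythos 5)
Your proposal is correct and follows exactly the paper's own route: Theorem~\ref{main theorem long >p} gives $\dim(A)=\ell-1$, Proposition~\ref{mimimini} then yields a basis of $A^{\perp}$ made of minimal elements of $\mathcal{S}_A$, and the $\ell-1$ basis vectors are distinct, giving the bound. The only cosmetic difference is that you extract an intermediate basis from $\mathcal{S}_A$ before invoking Proposition~\ref{mimimini}, a step the proposition already performs internally since its hypothesis is just $\dim(A)=\ell-1$.
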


In the case $\ell=p$, there are exceptional sequences $A$ such that $\dim(A) < \ell-1$.
However, such exceptions are very rare, since they have to be highly structured. 
The following analogue of Theorem \ref{main theorem long >p} is our first result in this direction.

\begin{theo}
\label{main theorem long p} 
Let $p$ be a prime and let $A=(a_1,\dots,a_p)$ be a sequence of $p$ nonzero elements in $\mathbb{F}_p$. 
Then one of the following statements holds.
\begin{itemize}
\item[$(i)$] $\dim(A)=1$ and there exists $r \in \mathbb{F}^*_p$ such that
$$(a_1,\dots,a_p)=(r,\dots,r).$$
\item[$(ii)$] $\dim(A)=p-2$ and there exist $t \in [1,p-3]$, $\sigma\in\mathfrak{S}_p$, $r \in \mathbb{F}^*_p$ such that
$$(a_{\sigma(1)},\dots,a_{\sigma(p)})=(\underbrace{r,\dots,r}_{t},\underbrace{-r,\dots,-r}_{p-2-t},-(t+1)r,-(t+1)r).$$
\item[$(iii)$] $\dim(A)=p-1$.
\end{itemize}
\end{theo}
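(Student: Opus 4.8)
The plan is to work with the orthogonal complement $W=\langle \mathcal{S}_A\rangle^{\perp}$ inside $\mathbb{F}_p^p$. Since every element of $\mathcal{S}_A$ is orthogonal to $A$, one has $A\in W$, so $\dim(A)=p-\dim(W)$ and the three cases $(i),(ii),(iii)$ correspond respectively to $\dim(W)=p-1,\ 2,\ 1$. Thus it suffices to classify the vectors $v\in W$ that are not proportional to $A$. The key reformulation is to attach to such a $v$ the sequence of pairs $(a_i,v_i)\in\mathbb{F}_p^2$: for a $0$-$1$ solution $I$, the pair of sums $\big(\sum_{i\in I}a_i,\ \sum_{i\in I}v_i\big)$ records membership in $\mathcal{S}_A$ in its first coordinate, and $v\in W$ is precisely the condition that the set $\Sigma^{(2)}$ of all such pair-sums meets the vertical axis $\{0\}\times\mathbb{F}_p$ only at the origin. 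Grouping indices according to the slope $\mu_i=v_i/a_i$ (well defined as $a_i\neq 0$), each slope class of size $m$ contributes a subset-sum set $\Sigma_j\subseteq\mathbb{F}_p$ of size at least $\min(p,m+1)$ by the Cauchy--Davenport inequality, itself a consequence of the Combinatorial Nullstellensatz.

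First I would dispose of the case where $v$ induces exactly two slopes. Here the axis condition reads $\Sigma_1\cap(-\Sigma_2)=\{0\}$, while $|\Sigma_1|+|\Sigma_2|\ge (m_1+1)+(m_2+1)=p+2$; as both sets contain $0$, they must share a nonzero element, producing a point $(0,c)$ with $c\neq 0$ in $\Sigma^{(2)}$, a contradiction. Hence any $v\in W\setminus\langle A\rangle$ induces at least three slopes. The same overflow estimate, applied with $\ell>p$, leaves no slack at all and is the mechanism behind Theorem~\ref{main theorem long >p}; at $\ell=p$ the estimate is tight, which is exactly why exceptional sequences can appear.

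The heart of the argument is the rigidity analysis for three or more slopes. After subtracting a suitable multiple of $A$ from $v$ (which keeps $v$ in $W$ and does not change the number of slopes), I may assume one slope is $0$; call its class $\Sigma_1$ and set $M=\mathbb{F}_p\setminus\Sigma_1$. One shows $\Sigma_1\neq\mathbb{F}_p$, since otherwise the needed $\alpha_1\in\Sigma_1$ always exists and the condition would force $\sum_{j\ge 2}\mu_j\alpha_j=0$ for every choice, in particular $\mu_2=0$, contradicting that the nonzero slopes are distinct. A Cauchy--Davenport count on the complementary classes then forces $\Sigma_1$ to be of minimal size $m_1+1$, i.e. $|M|=1$. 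Minimality is the equality case of the iterated Cauchy--Davenport inequality for $\{0,a_{i_1}\}+\cdots+\{0,a_{i_{m_1}}\}$, and here Vosper's theorem applies: it forces all the slope-$0$ values $a_i$ to be $\pm r$ for a single $r\in\mathbb{F}^*_p$, so $\Sigma_1$ is an interval of $p-1$ consecutive multiples of $r$ missing the single value $M=\{(t+1)r\}$, where $t$ counts the copies of $r$. The remaining classes must be singletons whose $a$-values lie in $-M$; since $|M|=1$ this pins every such value to $-(t+1)r$ and, because exactly two indices are left, yields precisely the sequence of $(ii)$. The constraints $1\le t\le p-3$ come from requiring both signs to occur, the degenerate values $t\in\{0,p-2\}$ collapsing the whole sequence to a constant one, which is case $(i)$.

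The main obstacle I anticipate is this last rigidity step: controlling the interaction between the slope-$0$ class and the nonzero-slope classes tightly enough to prove both that there are exactly two extra indices and that they are singletons, so that the Vosper equality case can be invoked cleanly. The bookkeeping with the sets $\Sigma_j$ and the bad fibre over $0$ is delicate, and a careful induction on the number of slopes (or on the length of the residual subsequence) is likely needed. Once the candidate families $(i)$ and $(ii)$ are isolated, it remains only to verify by a direct computation of $\mathcal{S}_A$ that they realize $\dim(W)=p-1$ and $\dim(W)=2$ respectively, and that no intermediate value of $\dim(W)$ can occur, giving the stated trichotomy.
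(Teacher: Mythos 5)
Your setup is sound and, up to notation, identical to the paper's: your ``slopes'' $\mu_i=v_i/a_i$ are the paper's ratios $\lambda_i$, your axis condition is Lemma~\ref{x-x}, and your two-slope argument ($|\Sigma_1|+|\Sigma_2|\ge p+2$ forces a nonzero common point of $\Sigma_1$ and $-\Sigma_2$) is exactly the paper's case $d=2$. The gap is the step you yourself flag as the anticipated obstacle: the claim that ``a Cauchy--Davenport count on the complementary classes forces $\Sigma_1$ to be of minimal size $m_1+1$, i.e.\ $|M|=1$.'' First, this conflates two different assertions: $|\Sigma_1|=m_1+1$ and $|M|=1$ coincide only when $m_1=p-2$, which is precisely what has to be proved. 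Second, the assertion is false as stated. Take $A=(r,\dots,r)$ constant; then $W=\langle\mathcal{S}_A\rangle^{\perp}$ is the whole hyperplane $\{v:\sum_i v_i=0\}$, and $v=(1,2,-3,0,\dots,0)$ lies in $W$ with $d=4$ slopes, slope-$0$ class of size $p-3$, $|\Sigma_1|=p-2=m_1+1$ (so ``minimal size'' holds) but $|M|=2$. So your dichotomy cannot be run uniformly; case $(i)$ must be split off beforehand, which is what the paper's exceptional/regular distinction (Proposition~\ref{degeneratecase l=p}, Lemma~\ref{thecut}) accomplishes.

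More fundamentally, no Cauchy--Davenport count can deliver the step you need. The pairwise constraints from Lemma~\ref{x-x} only give $|\Sigma_i|+|\Sigma_j|\le p+1$, and with $d\ge 3$ classes these inequalities are compatible with $|\Sigma_i|>m_i+1$ (the slack allowed for a pair $i,j$ is of the order of the size of the remaining classes); they are equally compatible with, say, three classes each of size about $p/3$, all of the form $\{\pm r_i\}$, and nothing in your sketch excludes this configuration. This is exactly where the paper has to invoke the Combinatorial Nullstellensatz twice: once in Theorem~\ref{prop1} (a degree-$p$ polynomial) to get $\sum_{i=1}^{d}(|\Sigma_i|-1)\le p$ together with the equality condition $\sum_{i=1}^{d}\lambda_i(|\Sigma_i|-1)=0$, which combined with the trivial lower bound forces $|\Sigma_i|=m_i+1$ for every $i$ and lets Lemma~\ref{Vosper} apply; and a second, harder time in Proposition~\ref{pushpull} (a degree-$(p+1)$ polynomial obtained by moving one element between two classes), whose coefficient computation $2(\lambda_{j_0}-\lambda_{i_0})^2-\sum_i\lambda_i^2(|\Sigma_i|-1)$ is what finally kills all configurations with $d\ge 3$ for regular sequences. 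Your proposal has no analogue of either application, and the suggested fallback (induction on the number of slopes) does not obviously supply the missing mechanism, so the heart of the theorem remains unproved in your outline.
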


Therefore, in all cases where $\dim(A)=p-1$, the same argument as above readily implies that $A^{\perp}$ has a basis consisting of elements in $\mathcal{S}_A$. 
In addition, the two exceptional cases $(i)$ and $(ii)$ correspond to highly structured sequences, for which we can easily check by hand that $\mathcal{S}_A$ characterizes $A^{\perp}$ indeed.
This answers the reconstruction problem affirmatively in the case $\ell=p$.

\begin{cor} 
\label{reconstruction l=p}
Let $p$ be a prime. Let also $A,B$ be two sequences of $p$ nonzero elements in $\mathbb{F}_p$.
Then $\mathcal{S}_A=\mathcal{S}_B$ if and only if $A$ and $B$ are collinear. 
\end{cor}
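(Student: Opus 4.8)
The plan is to prove both implications, the forward one being immediate and the reverse one splitting according to Theorem~\ref{main theorem long p}. First, if $A$ and $B$ are collinear then $A^{\perp}=B^{\perp}$, whence $\mathcal{S}_A=A^{\perp}\cap\{0,1\}^p=B^{\perp}\cap\{0,1\}^p=\mathcal{S}_B$; this settles the ``if'' direction. For the converse, assume $\mathcal{S}_A=\mathcal{S}_B$. Taking spans gives $\langle\mathcal{S}_A\rangle=\langle\mathcal{S}_B\rangle$, and in particular $\dim(A)=\dim(B)$. Since $A$ and $B$ are sequences of $p$ nonzero elements, Theorem~\ref{main theorem long p} forces this common value to be $1$, $p-2$ or $p-1$, and I would treat these three cases separately. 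If $\dim(A)=\dim(B)=p-1$, then $\langle\mathcal{S}_A\rangle$ is a $(p-1)$-dimensional subspace of the hyperplane $A^{\perp}$, hence equals $A^{\perp}$; likewise $\langle\mathcal{S}_B\rangle=B^{\perp}$. Therefore $A^{\perp}=B^{\perp}$, and two hyperplanes coincide exactly when their normals are collinear, so $A$ and $B$ are collinear. If $\dim(A)=\dim(B)=1$, then case $(i)$ applies to both $A$ and $B$, so each is a constant sequence; any two constant sequences with nonzero entries are collinear, and we are done.

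The remaining case $\dim(A)=\dim(B)=p-2$ is the crux, and I would handle it by an explicit computation. Applying a common permutation of the coordinates (which transforms $\mathcal{S}_A$ and $\mathcal{S}_B$ by the same permutation and preserves collinearity) and rescaling $A$ (which changes neither $A^{\perp}$ nor the collinearity class), I may assume that $A$ is exactly the normal form of case $(ii)$, namely $A=(\underbrace{1,\dots,1}_{t},\underbrace{-1,\dots,-1}_{p-2-t},-(t+1),-(t+1))$ for some $t\in[1,p-3]$. Writing $W=\langle\mathcal{S}_A\rangle$, so that $\dim(W)=p-2$ and $\dim(W^{\perp})=2$, the task is to pin down $W^{\perp}$. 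The vectors $e_i+e_j$ with $i\in[1,t]$ and $j\in[t+1,p-2]$ lie in $\mathcal{S}_A$, which forces any $v\in W^{\perp}$ to be constant equal to some $\lambda$ on the first block and to $-\lambda$ on the second block. A short check shows the equation has no $0$-$1$ solution selecting exactly one of the last two coordinates, while it does admit one selecting both; evaluating $v$ against such a solution yields the single relation $v_{p-1}+v_p=-2(t+1)\lambda$. As these constraints already cut out a $2$-dimensional space containing $W^{\perp}$, they describe $W^{\perp}$ exactly, and one reads off $W^{\perp}=\langle A,\,e_{p-1}-e_p\rangle$.

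Finally, since $B\in W^{\perp}$ I may write $B=\mu A+\beta(e_{p-1}-e_p)$, so that the coordinates of $B$ take the value $\mu$ with multiplicity $t$, the value $-\mu$ with multiplicity $p-2-t$, and the two values $c_{\pm}=-(t+1)\mu\pm\beta$. If $\beta=0$ then $B=\mu A$ and we are done. To exclude $\beta\neq0$, I invoke that $\dim(B)=p-2$ forces, by case $(ii)$ of Theorem~\ref{main theorem long p}, the multiset of coordinates of $B$ to consist of exactly three distinct values, two of which are opposite and the third of which has multiplicity exactly $2$. Now $\beta\neq0$ gives $c_+\neq c_-$, and I would rule out all possibilities: if neither $c_+$ nor $c_-$ lies in $\{\mu,-\mu\}$, then $B$ has at least four distinct values, a contradiction; if both lie in $\{\mu,-\mu\}$, then $B$ has at most two distinct values, again a contradiction; and if exactly one does, the remaining value among $c_{\pm}$ occurs only once, hence must be one of the two opposite values, so its negative is also a coordinate of $B$. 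But the distinct coordinate values of $B$ are then $\mu$, $-\mu$ and this value alone, which (being nonzero) forces it into $\{\mu,-\mu\}$, a contradiction. Therefore $\beta=0$, giving $B=\mu A$ and the collinearity of $A$ and $B$.

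I expect the third paragraph to be the only genuine obstacle: it rests on the explicit determination of $W^{\perp}$ and on a finite but slightly delicate multiset analysis. The decisive leverage is the rigidity of case $(ii)$, namely that the block relation $v_{p-1}+v_p=-2(t+1)\lambda$ together with the all-nonzero requirement on $B$ leaves no room for a nonzero component $\beta$ along $e_{p-1}-e_p$.
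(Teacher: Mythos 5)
Your proposal is correct and follows essentially the same route as the paper: both use the trichotomy of Theorem~\ref{main theorem long p}, dispatch cases $(i)$ and $(iii)$ immediately, and in case $(ii)$ write $B=\mu A+\beta(e_{p-1}-e_p)$ (i.e.\ identify $\langle\mathcal{S}_A\rangle^{\perp}$) and force $\beta=0$ from the fact that $B$ must itself have the form $(ii)$. Your explicit determination of $\langle\mathcal{S}_A\rangle^{\perp}$ and the final multiset analysis merely spell out details that the paper's proof asserts tersely.
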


In his last paper \cite{Ol}, Olson proved a conjecture of Erd\H{o}s stating that if  $A$ is a nonconstant sequence of $p$ nonzero elements in $\mathbb{F}_p$, then $\mathcal{S}_A$ contains at least $p-1$ nonzero elements. 
In this respect, Theorem \ref{main theorem long p} actually yields a strong version of the original conjecture of Erd\H{o}s.   
More precisely, this conjecture still holds when we restrict ourselves to counting minimal elements in $\mathcal{S}_A$ only. 
This gives the following refinement of Olson's theorem. 

\begin{cor} 
\label{long p minimales}
Let $p$ be a prime and let $A=(a_1,\dots,a_p)$ be a nonconstant sequence of $p$ nonzero elements in $\mathbb{F}_p$. 
Then $\mathcal{S}_A$ contains at least $p-1$ minimal elements. 
\end{cor}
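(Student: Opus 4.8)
The plan is to combine the structural classification of Theorem~\ref{main theorem long p} with the minimalization principle of Proposition~\ref{mimimini}. Since $A$ is nonconstant, case $(i)$ is excluded, so $A$ falls under case $(ii)$ or case $(iii)$. In case $(iii)$ one has $\dim(A)=p-1$, so $\langle \mathcal{S}_A\rangle$ admits a basis consisting of elements of $\mathcal{S}_A$; by Proposition~\ref{mimimini} this basis may be replaced by one made up entirely of minimal elements of $\mathcal{S}_A$. As a basis consists of linearly independent, hence pairwise distinct, vectors, this already produces $p-1$ distinct minimal elements and settles case $(iii)$ for free.

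The remaining case $(ii)$ is the main obstacle, since there $\dim(A)=p-2$ and Proposition~\ref{mimimini} only yields $p-2$ minimal elements, so at least one further minimal solution must be located. Here I would abandon the dimension count and enumerate minimal solutions directly, exploiting the very rigid shape of $A$ (note that case $(ii)$ is vacuous unless $p\ge 5$). After scaling by $r^{-1}$, which does not change $\mathcal{S}_A$, the sequence consists of $t$ entries equal to $1$, of $p-2-t$ entries equal to $-1$, and of two entries equal to $-(t+1)$. A $0$-$1$ solution is then encoded by a triple $(\alpha,\beta,\gamma)$ recording how many entries of each of the three kinds it uses, and it is a zero-sum precisely when
$$\alpha-\beta-\gamma(t+1)\equiv 0 \pmod p, \qquad 0\le\alpha\le t,\quad 0\le\beta\le p-2-t,\quad 0\le\gamma\le 2.$$
Since entries of equal value are interchangeable, minimality is a property of the triple alone, so the task reduces to listing the \emph{minimal} admissible triples and summing the corresponding counts $\binom{t}{\alpha}\binom{p-2-t}{\beta}\binom{2}{\gamma}$.

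Carrying out this analysis, one checks that for $\gamma=0$ the only minimal triple is $(1,1,0)$, that $\gamma=1$ yields no zero-sum at all (the congruence has no solution in the admissible ranges), and that for $\gamma=2$ the congruence forces $\alpha-\beta=2t+2-p$, producing a single minimal triple, namely $(2t+2-p,\,0,\,2)$ when $2t+2>p$ and $(0,\,p-2t-2,\,2)$ when $2t+2<p$ (the equality $2t+2=p$ being impossible, as $p$ is odd in the relevant range). Summing the counts gives
$$\#\{\text{minimal solutions}\}=t(p-2-t)+\begin{cases}\binom{t}{2t+2-p}&\text{if } 2t+2>p,\\ \binom{p-2-t}{p-2t-2}&\text{if } 2t+2<p.\end{cases}$$

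Finally I would bound this from below. The factorization $t(p-2-t)-(p-3)=(t-1)(p-3-t)\ge 0$ shows that the first term is always at least $p-3$, while the constraint $t\in[1,p-3]$ guarantees that the surviving binomial coefficient has its lower index strictly between $0$ and its upper index, hence is at least $2$. Adding the two bounds gives at least $(p-3)+2=p-1$ minimal solutions, which is exactly the desired estimate (and is attained for $p=5$ at the extreme values of $t$). The delicate points are the complete determination of the minimal triples — in particular ruling out any further minimal zero-sum when $\gamma=2$ — and checking that the residual binomial coefficient never degenerates to $1$, both of which hinge on the inequalities $1\le t\le p-3$.
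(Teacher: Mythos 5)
Your proposal is correct and takes essentially the same approach as the paper: case $(iii)$ is dispatched via Proposition~\ref{mimimini}, and in case $(ii)$ you count minimal elements directly, your total $t(p-2-t)+\binom{t}{2t+2-p}$ (resp.\ $t(p-2-t)+\binom{p-2-t}{p-2t-2}$) being exactly the paper's count $mM+\binom{M}{m}$ after applying the symmetry $\binom{n}{k}=\binom{n}{n-k}$. The only difference is cosmetic, in the final estimate: the paper splits into $t\in[2,p-4]$ and $t\in\{1,p-3\}$, whereas you use $t(p-2-t)\ge p-3$ together with the observation that the binomial factor is at least $2$.
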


In the case $\ell=p-1$, the situation becomes a little more involved but similar results can still be proved.
On the one hand, all sequences $A$ with $\sigma(A) \neq 0$ can easily be handled (see Lemma \ref{dim of non zero-sum sequences}).
On the other hand, the following theorem fully describes the case of sequences $A$ with $\sigma(A)=0$, which will be called \em zero-sum \em sequences. 

\begin{theo}
\label{main theorem long p-1}
Let $p$ be a prime and let $A=(a_1,\dots,a_{p-1})$ be a zero-sum sequence of $p-1$ nonzero elements in $\mathbb{F}_p$. 
Then one of the following statements holds.
\begin{itemize}
\item[$(i)$] $\dim(A)=1$ and there exist $\sigma\in\mathfrak{S}_{p-1}$, $r \in \mathbb{F}^*_p$ such that
$$(a_{\sigma(1)},\dots,a_{\sigma(p-1)})=(r,\dots,r,2r).$$
\item[$(ii)$] $\dim(A)=p-4$ and there exist $\sigma\in\mathfrak{S}_{p-1}$, $r \in \mathbb{F}^*_p$ such that
$$(a_{\sigma(1)},\dots,a_{\sigma(p-1)})=(\underbrace{r,\dots,r}_{p-5},-r,2r,2r,2r).$$
\item[$(iii)$] $\dim(A)=p-3$ and there exist $t\in[0,p-6]$, $\sigma\in\mathfrak{S}_{p-1}$, $r \in \mathbb{F}^*_p$ such that
$$(a_{\sigma(1)},\dots,a_{\sigma(p-1)})=(\underbrace{r,\dots,r}_{t},\underbrace{-r,\dots,-r}_{p-4-t},2r,-(t+3)r,-(t+3)r).$$
\item[$(iv)$] $\dim(A)=p-3$ and there exist $t\in[1,p-4]$, $\sigma\in\mathfrak{S}_{p-1}$, $r \in \mathbb{F}^*_p$ such that
$$(a_{\sigma(1)},\dots,a_{\sigma(p-1)})=(\underbrace{r,\dots,r}_{t},\underbrace{-r,\dots,-r}_{p-3-t},-(t+1)r,-(t+2)r).$$
\item[$(v)$] $p=7$, $\dim(A)=p-3=4$ and there exists $\sigma\in\mathfrak{S}_6$ such that
\[(a_{\sigma(1)},\dots,a_{\sigma(6)})=(-1,1,-2,2,-3,3).\]
\item[$(vi)$] $\dim(A)=p-2$.
\end{itemize}
\end{theo}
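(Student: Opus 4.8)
The plan is to reformulate the statement in dual terms and then split a generic regime, treated by the Combinatorial Nullstellensatz, from a thin exceptional regime, treated by a Vosper type theorem. The basic dictionary is the following: since $\langle \mathcal{S}_A \rangle \subseteq A^{\perp}$ and a vector $B$ satisfies $\mathcal{S}_A \subseteq \mathcal{S}_B$ if and only if $B \in \langle \mathcal{S}_A \rangle^{\perp}$, the subspace $\langle \mathcal{S}_A \rangle^{\perp}$ always contains $\langle A \rangle$, and
$$p-2-\dim(A)=\dim\big(\langle \mathcal{S}_A \rangle^{\perp}/\langle A \rangle\big).$$
Thus $\dim(A)=p-2$ (case $(vi)$) holds exactly when the only sequences $B$ with $\mathcal{S}_A \subseteq \mathcal{S}_B$ are the multiples of $A$, and each unit of deficiency $p-2-\dim(A)$ corresponds to one further independent direction $B \notin \langle A \rangle$ with $\mathcal{S}_A \subseteq \mathcal{S}_B$. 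Since $A$ is zero-sum we have $\mathbf{1}\in\mathcal{S}_A$, so every such $B$ is itself zero-sum. I would therefore study the pairs $(A,B)$ with $B \notin \langle A \rangle$ for which every $0$-$1$ vector orthogonal to $A$ is orthogonal to $B$, the number of independent such $B$ being the quantity that indexes cases $(i)$--$(v)$.

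To dispose of the generic case $\dim(A)=p-2$, I would encode each index $i$ by the point $P_i=(a_i,b_i)\in\mathbb{F}_p^2$; the condition $\mathcal{S}_A \subseteq \mathcal{S}_B$ then says precisely that no subset of the $P_i$ sums to a point of the punctured line $\{0\}\times\mathbb{F}_p^*$. I would measure how restrictive this is through the Combinatorial Nullstellensatz, building a polynomial on the grid $\{0,1\}^{p-1}$ whose nonvanishing at a point forces $A\cdot x=0$ together with $B\cdot x\neq 0$, and whose relevant top coefficient is a symmetric function of the $P_i$. The delicate point is that here $\ell=p-1$ sits essentially at the degree budget dictated by the Fermat exponent $p-1$, so the grid is critically small; the top coefficient is then nonzero, yielding a solution witnessing $\mathcal{S}_A \not\subseteq \mathcal{S}_B$, \emph{unless} a Cauchy--Davenport type inequality attached to the subset sums of $A$ and of the pair $(A,B)$ is tight.

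When that inequality is tight, I would invoke the Vosper type theorem to force the underlying sets to be arithmetic progressions, which fixes the multiplicities of the values occurring in $A$ and drives $A$ into a short list of highly structured sequences. Translating the progression data back into normalized form by scaling by $r$, permuting, and solving the two linear constraints $\sigma(A)=0$ and the prescribed $B$-relation should reproduce exactly the explicit families $(i)$--$(v)$. Here the length-$p$ classification of Theorem \ref{main theorem long p} can be used as leverage: appending a well chosen $c\in\mathbb{F}_p^*$ to $A$ gives $\dim(A)\le\dim\big((a_1,\dots,a_{p-1},c)\big)-1$, so appended sequences that remain exceptional are tightly constrained by case $(ii)$ of that theorem. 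The sporadic family $(v)$, present only at $p=7$, is expected to arise precisely from the exceptional configuration of Vosper's theorem, in which the relevant set exhausts $\mathbb{F}_p^*$, which is why it does not survive for larger primes.

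The main obstacle, as in the length-$p$ case but sharper, is the critical-case analysis: because the degree budget matches $\ell$ so closely, the Combinatorial Nullstellensatz operates at the boundary of applicability, so one must extract the exact extremal condition, apply the Vosper type theorem together with all its degenerate and sporadic configurations, and then perform the bookkeeping that separates cases $(iii)$ and $(iv)$, captures $(v)$, and certifies that the six families are mutually exclusive and exhaustive. I would close by verifying directly, for each listed family, that $\dim(A)$ takes the asserted value, thereby confirming that no configuration has been lost.
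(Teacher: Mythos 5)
Your high-level philosophy (dualize, treat a generic regime by the Combinatorial Nullstellensatz and a tight regime by Vosper-type structure) matches the paper's, but the concrete plan has gaps at exactly the points where the paper's proof does real work. First, your Nullstellensatz step cannot be run ``on the grid $\{0,1\}^{p-1}$'': any polynomial whose nonvanishing at a point of the cube certifies $A\cdot x=0$ together with $B\cdot x\neq 0$ must contain a factor of degree $p-1$ (such as $(A\cdot x)^{p-1}-1$) plus at least one more degree, so its total degree is at least $p$, while the cube only supports top monomials of total degree $p-1$ with each exponent at most $1$. The paper's central idea, absent from your proposal, is to compress the problem before applying the Nullstellensatz: partition the indices into classes $S_1,\dots,S_d$ according to the ratio $\lambda_i=b_i/a_i$, replace the cube by the product of subset-sum sets $\Sigma_i=\Sigma(S_i)$ (which satisfy $\Sigma_i\cap(-\Sigma_j)=\{0\}$, whence $\sum_{i}(|\Sigma_i|-1)\ge p-1$), and apply the Nullstellensatz to $\left(\sum_{i}\lambda_i X_i\right)\left(\left(\sum_{i}X_i\right)^{p-1}-1\right)$ in only $d$ variables. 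Even then, at $\ell=p-1$ this first polynomial only pins $\sum_{i}(|\Sigma_i|-1)$ to $\{p-1,p\}$; the paper needs a second, degree-$(p+1)$ polynomial $Q_{i_0,j_0}$ (Proposition \ref{pushpull}, obtained by moving one element of $S_{i_0}$ into $S_{j_0}$), a nonvanishing lemma for the resulting quadratic form in the ratios (Lemma \ref{conic}), and a classification of pairs of arithmetic progressions meeting only at $0$ (Lemma \ref{P1P2}), followed by a case analysis over $d=2$, $d\ge 3$, and the number of classes of size greater than $1$. None of this is routine ``bookkeeping,'' and your proposal gives no indication of how it would be carried out.

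Second, two of your specific claims are wrong. The sporadic case $(v)$ at $p=7$ does not arise from a Vosper exceptional configuration in which a set exhausts $\mathbb{F}^*_p$; in the paper it appears in the regular case when three ratio classes satisfy $r_1=\pm 2r_2$, $r_2=\pm 2r_3$, $r_3=\pm 2r_1$, forcing $8\equiv\pm 1 \pmod{p}$, i.e. $p=7$, with $S_1=(-1,1)$, $S_2=(-2,2)$, $S_3=(-3,3)$. And your leverage via appending an element $c$ so as to invoke Theorem \ref{main theorem long p} points the wrong way: the inequality $\dim(A)\le\dim\bigl((a_1,\dots,a_{p-1},c)\bigr)-1$ bounds the dimension of the extended sequence from \emph{below}, so a degenerate $A$ need not yield an exceptional length-$p$ sequence. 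Concretely, for $A=(r,\dots,r,2r)$ (case $(i)$, $\dim(A)=1$), appending $c=r$ produces a sequence of full dimension $p-1$, so the length-$p$ classification says nothing about $A$. The exact reduction $\dim(A')=\dim(A)+1$ of Lemma \ref{dim of non zero-sum sequences} is available only when $\sigma(A)\neq 0$, by appending $-\sigma(A)$; for zero-sum $A$ --- the case of this very theorem --- no such reduction exists, which is precisely why the theorem requires its own independent proof.
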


It turns out that $\ell=p$ actually is the critical case for our reconstruction problem, since there are sequences $A$ of $p-1$ nonzero elements in $\mathbb{F}_p$ for which $\mathcal{S}_A$ does not fully characterize $A^{\perp}$. However, we can prove there are essentially two families of such exceptions. 

\begin{cor}
\label{reconstruction long p-1}
Let $p$ be a prime. Let also $A,B$ be two sequences of $p-1$ nonzero elements in $\mathbb{F}_p$.
Then $\mathcal{S}_A=\mathcal{S}_B$ if and only if $A$ and $B$ are either collinear or have one of the following forms.
\begin{itemize}
\item[$(i)$] There exist $\sigma\in\mathfrak{S}_{p-1}$, $r \in \mathbb{F}^*_p$, $\lambda \in \mathbb{F}^*_p$ such that
$$(a_{\sigma(1)},\dots,a_{\sigma(p-1)})=(r,\dots,r,r,2r)$$
and
$$(b_{\sigma(1)},\dots,b_{\sigma(p-1)})=(\lambda r,\dots,\lambda r,2\lambda r,\lambda r).$$
\item[$(ii)$] There exist $t \in [1,p-4]$, $\sigma\in\mathfrak{S}_{p-1}$, $r \in \mathbb{F}^*_p$, $\lambda \in \mathbb{F}^*_p$ such that
$$(a_{\sigma(1)},\dots,a_{\sigma(p-1)})=(\underbrace{r,\dots,r}_{t},\underbrace{-r,\dots,-r}_{p-3-t},-(t+1)r,-(t+2)r)$$
and
$$(b_{\sigma(1)},\dots,b_{\sigma(p-1)})=(\underbrace{\lambda r,\dots,\lambda r}_{t},\underbrace{-\lambda r,\dots,-\lambda r}_{p-3-t},-(t+2)\lambda r,-(t+1)\lambda r).$$
\item[$(iii)$] $p=7$ and there exist $\sigma\in\mathfrak{S}_6$, $\lambda \in \mathbb{F}^*_7$ such that
$$(a_{\sigma(1)},\dots,a_{\sigma(6)})=(-1,1,-2,2,-3,3)$$
and
$$(b_{\sigma(1)},\dots,b_{\sigma(6)})=(-\lambda,\lambda,3\lambda,-3\lambda,2\lambda,-2\lambda).$$
\end{itemize}
\end{cor}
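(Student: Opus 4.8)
The proof rests on the elementary identity $\mathcal{S}_A = \langle \mathcal{S}_A\rangle \cap \{0,1\}^\ell$, valid for every sequence $A$: since $\langle \mathcal{S}_A\rangle \subseteq A^\perp$ one has $\mathcal{S}_A \subseteq \langle \mathcal{S}_A\rangle \cap \{0,1\}^\ell \subseteq A^\perp \cap \{0,1\}^\ell = \mathcal{S}_A$. Consequently $\mathcal{S}_A = \mathcal{S}_B$ holds if and only if $\langle \mathcal{S}_A\rangle = \langle \mathcal{S}_B\rangle$, so the whole statement reduces to classifying the non-collinear pairs $(A,B)$ of nonzero length-$(p-1)$ sequences for which $\langle \mathcal{S}_A\rangle = \langle \mathcal{S}_B\rangle =: V$. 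Two constraints are immediate. First, $\langle \mathcal{S}_A\rangle = \langle \mathcal{S}_B\rangle$ forces $\dim(A) = \dim(B)$. Second, the all-ones vector belongs to $\mathcal{S}_A$ exactly when $\sigma(A) = 0$, so $\mathcal{S}_A = \mathcal{S}_B$ forces $A$ and $B$ to be simultaneously zero-sum or simultaneously non-zero-sum.

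I would first dispose of the generic regime. If $\dim(A) = p-2$ then $V = A^\perp$, and from $A^\perp = V = \langle \mathcal{S}_B\rangle \subseteq B^\perp$ together with $\dim A^\perp = \dim B^\perp = p-2$ one gets $A^\perp = B^\perp$, i.e. $A$ and $B$ are collinear. By Lemma \ref{dim of non zero-sum sequences} every non-zero-sum sequence has dimension $p-2$; hence all non-zero-sum pairs are collinear, and by the dichotomy above it remains only to treat zero-sum sequences. For zero-sum sequences Theorem \ref{main theorem long p-1} gives $\dim(A) < p-2$ only in the families $(i)$--$(v)$, family $(vi)$ being covered by the collinearity argument just given. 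Thus both $A$ and $B$ must lie in the families $(i)$--$(v)$ and share the same dimension.

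The heart of the argument is then a finite case analysis governed by the containment $A, B \in V^\perp$. Writing $c = (p-1) - \dim(A)$ for the codimension of $V$, the sequences $B$ with $\mathcal{S}_B = \mathcal{S}_A$ are exactly the all-nonzero vectors of the $c$-dimensional space $V^\perp$ whose orthogonal hyperplane contains no $0$-$1$ point outside $V$ (equivalently, for which $\langle \mathcal{S}_B\rangle$ does not grow beyond $V$). For each family I would compute $\mathcal{S}_A$ explicitly, which pins down $V$ and $V^\perp$, and then sweep through the candidates. In family $(i)$ a direct count yields $\mathcal{S}_A = \{0, (1,\dots,1)\}$ and $V = \langle (1,\dots,1)\rangle$, the admissible $B$ being the zero-sum sequences of dimension $1$, i.e. again those of family $(i)$; the non-collinear pairs are precisely those whose doubled coordinate occupies different positions, yielding Corollary case $(i)$. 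In families $(iii)$, $(iv)$ and $(v)$ one has $c = 2$, so $V^\perp = \langle A, C\rangle$ is a pencil and only finitely many lines $\langle \alpha A + \beta C\rangle$ need to be tested. For family $(iv)$ the explicit form of $\mathcal{S}_A$ shows that every solution has its two tail coordinates equal, the out-of-range residues forbidding the asymmetric configurations; this is exactly what turns the transposition of the two tail entries into an extra symmetry of $\mathcal{S}_A$ and produces the non-collinear partner of Corollary case $(ii)$. At $p = 7$, family $(v)$, the coincidence of the three sign-pair difference patterns likewise produces Corollary case $(iii)$. For family $(iii)$, and for every remaining line of each pencil, the candidate $B$ is discarded because it either has a vanishing coordinate or enlarges $\langle \mathcal{S}_B\rangle$ beyond $V$, so family $(iii)$ yields only collinear partners. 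Finally, in family $(ii)$ one has $c = 3$, but any admissible $B$ is again zero-sum of dimension $p-4$, hence itself of family $(ii)$, and $\mathcal{S}_A$ is rigid enough to force $B$ collinear to $A$.

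The converse direction is then settled by verifying the three displayed coincidences directly: each is an instance of the observation that the relevant permutation---transposing two tail coordinates, or permuting the $\pm$-pairs at $p=7$---fixes $\mathcal{S}_A$ because the solution set is confined to configurations invariant under it. The main obstacle is the pencil analysis for the codimension-two families: one must compute $\mathcal{S}_A$ precisely enough both to explain why the asymmetric $0$-$1$ solutions are absent (so the tail transposition is a genuine symmetry) and to exclude every other line of $V^\perp$. The sporadic identity at $p=7$ and the rigidity of family $(ii)$ are the most delicate verifications and have to be carried out by hand.
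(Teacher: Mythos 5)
Your framework is sound in its zero-sum part --- the identity $\mathcal{S}_A = \langle\mathcal{S}_A\rangle \cap \{0,1\}^{p-1}$, the resulting equivalence $\mathcal{S}_A = \mathcal{S}_B \iff \langle\mathcal{S}_A\rangle = \langle\mathcal{S}_B\rangle$, the disposal of the case $\dim(A)=p-2$, and the sweep through the exceptional families of Theorem \ref{main theorem long p-1} essentially reproduce the paper's own case analysis. But the step where you dispose of non-zero-sum sequences contains a genuine error. You claim that ``by Lemma \ref{dim of non zero-sum sequences} every non-zero-sum sequence has dimension $p-2$.'' That is false. Lemma \ref{dim of non zero-sum sequences} only says $\dim(A) = \dim(A') - 1$, where $A'$ is the length-$p$ lift, and Theorem \ref{main theorem long p} allows $\dim(A') \in \{1, p-2, p-1\}$; hence $\dim(A) \in \{0, p-3, p-2\}$ --- the paper records exactly this right after the lemma. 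Concretely, a constant sequence $(r,\dots,r)$ of length $p-1$ is non-zero-sum with $\dim(A)=0$ (its only $0$-$1$ solution is the zero vector), and deleting one element from a sequence of type $(ii)$ in Theorem \ref{main theorem long p} --- e.g.\ $t$ copies of $r$, $p-3-t$ copies of $-r$, and two copies of $-(t+1)r$ --- gives a non-zero-sum sequence with $\dim(A)=p-3$.

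Consequently your dichotomy leaves two whole families of non-zero-sum sequences untreated, and these are precisely the ones where collinearity is not automatic from dimension counting. For the constant case you would need the (true, but nowhere proved in your sketch) fact that a length-$(p-1)$ sequence with $\mathcal{S}_B=\{0\}$, i.e.\ a zero-sum-free sequence of maximal length, is necessarily constant. For the dimension-$(p-3)$ case you would need to run your pencil analysis on a codimension-two space and rule out partners of the form $(\lambda r,\dots,-\lambda r,\dots,-(t+1)\lambda r + d,\, -(t+1)\lambda r - d)$ with $d \neq 0$; this genuinely requires an argument, since every such $B$ satisfies $\mathcal{S}_A \subseteq \mathcal{S}_B$ (these sequences are exceptional, so every solution of $A$ uses both or neither of the two special coordinates, whose sum is preserved). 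The paper sidesteps all of this with one move you should adopt: when $\sigma(A)\neq 0$, Lemma \ref{reconstruction of non zero-sum sequences} shows that $\mathcal{S}_A=\mathcal{S}_B$ holds iff $\mathcal{S}_{A'}=\mathcal{S}_{B'}$ holds for the length-$p$ lifts, and Corollary \ref{reconstruction l=p} then forces $A'$ and $B'$ --- hence $A$ and $B$ --- to be collinear, uniformly across all three possible dimensions. With that repair, the remainder of your outline matches the paper's proof.
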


As for the number of minimal elements in $\mathcal{S}_A$ when $\ell=p-1$, the following result easily follows from Theorem \ref{main theorem long p-1}.

\begin{cor} 
\label{long p-1 minimales}
Let $p$ be a prime and let $A=(a_1,\dots,a_{p-1})$ be a sequence of $p-1$ nonzero elements in $\mathbb{F}_p$.
 Then $\mathcal{S}_A$ contains at least $p-3$ minimal elements unless $A$ has one of the following forms.  
\begin{itemize}
\item[$(i)$] There exists $r \in \mathbb{F}^*_p$ such that
$$(a_1,\dots,a_{p-1})=(r,\dots,r).$$
\item[$(ii)$] There exist $\sigma\in\mathfrak{S}_{p-1}$ and $r \in \mathbb{F}^*_p$ such that
$$(a_{\sigma(1)},\dots,a_{\sigma(p-1)})=(r,\dots,r,2r).$$
\end{itemize}
\end{cor}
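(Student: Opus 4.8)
The plan is to reduce everything to the dimension results already in hand, via the elementary lower bound ``number of minimal elements $\ge \dim(A)$.'' First I would record this bound: since $\langle \mathcal{S}_A\rangle$ is spanned by elements of $\mathcal{S}_A$, Proposition \ref{mimimini} lets one extract from any such spanning family a basis of $\langle \mathcal{S}_A\rangle$ consisting entirely of \emph{minimal} elements of $\mathcal{S}_A$; hence $\mathcal{S}_A$ always contains at least $\dim(A)$ minimal elements. This is exactly the mechanism behind Corollaries \ref{long >p minimales} and \ref{long p minimales}, and it settles at once every case in which $\dim(A)\ge p-3$.

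Next I would split according to whether $A$ is zero-sum. If $\sigma(A)\neq 0$, then Lemma \ref{dim of non zero-sum sequences} disposes of the situation: it yields $\dim(A)\ge p-3$ for every non-constant such sequence, so the lower bound above gives at least $p-3$ minimal elements, while the constant sequence $(r,\dots,r)$ is precisely exception $(i)$ (there $\mathcal{S}_A=\{0\}$, with no minimal element). If $\sigma(A)=0$, I would run through the six cases of Theorem \ref{main theorem long p-1}. In cases $(iii)$, $(iv)$, $(v)$ and $(vi)$ one has $\dim(A)\ge p-3$, so the lower bound immediately delivers at least $p-3$ minimal elements. Case $(i)$ is exactly exception $(ii)$ of the statement: here $\mathcal{S}_A=\{0,\mathbf{1}\}$, a single minimal element.

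The only genuinely delicate situation is case $(ii)$ of Theorem \ref{main theorem long p-1}, namely $A=(\underbrace{r,\dots,r}_{p-5},-r,2r,2r,2r)$, where $\dim(A)=p-4$ and the generic bound falls one short of $p-3$; I must therefore exhibit minimal elements that are \emph{linearly dependent} on a chosen basis. The plan is to describe $\mathcal{S}_A$ by hand. Normalizing $r=1$ and letting $s$, $u$, $v$ denote the weights carried by the block of $p-5$ coordinates equal to $r$, the single coordinate equal to $-r$, and the three coordinates equal to $2r$, the defining congruence reads $s-u+2v\equiv 0 \pmod p$ with $s\in[0,p-5]$, $u\in\{0,1\}$, $v\in[0,3]$; a direct inspection shows the only possibilities are $s-u+2v\in\{0,p\}$. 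This produces, on one hand, the $p-5$ weight-two solutions $\{i,p-4\}$ (as $a_i+a_{p-4}=0$) and, on the other hand, the $p-5$ solutions of weight $p-3$ obtained by taking the three coordinates carrying $2r$ together with all but one of the coordinates carrying $r$. All of these are easily checked to be minimal and pairwise distinct, so $\mathcal{S}_A$ has at least $2(p-5)\ge p-3$ minimal elements as soon as $p\ge 7$. The boundary value $p=5$ is no obstruction, since there case $(ii)$ degenerates to $(-r,2r,2r,2r)$, which after rescaling is the sequence $(r',\dots,r',2r')$ of exception $(ii)$. This explicit analysis is the step I expect to require the most care, as it is the one place where counting a basis is not enough and $\mathcal{S}_A$ must be examined directly.
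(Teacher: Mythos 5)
Your proposal is correct and follows essentially the same route as the paper: split on $\sigma(A)\neq 0$ versus $\sigma(A)=0$, use Lemma \ref{dim of non zero-sum sequences} together with Theorem \ref{main theorem long p} (resp.\ Theorem \ref{main theorem long p-1}) and the minimal-basis mechanism of Proposition \ref{mimimini} whenever $\dim(A)\ge p-3$, and handle the remaining case $\dim(A)=p-4$ by a direct count, where the paper likewise finds $2(p-5)\ge p-3$ minimal elements for $p\ge 7$ and identifies $p=5$ with exception $(ii)$. Your explicit enumeration of $\mathcal{S}_A$ in that last case (the $p-5$ pairs and the $p-5$ complements of weight $p-3$) simply fills in details that the paper asserts without proof.
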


The outline of the paper is as follows. 
In Section \ref{section : tools}, we recall some useful tools. 
In Section \ref{section : long >p}, we introduce the notion of a ratio set, which makes it possible to deduce our Theorem \ref{main theorem long >p} from the Combinatorial Nullstellensatz.
In Section \ref{section : long p}, we prove Theorem \ref{main theorem long p} and its corollaries. 
Our method is similar in nature to the one used in Section \ref{section : long >p}, except that a Vosper type theorem comes into play.
In Section \ref{section : long p-1}, this approach is further refined to solve our three problems in the case $\ell = p-1$.
Finally, in Section \ref{section : conclusion}, we extend our results to the affine setting.

\section{Some tools}
\label{section : tools}
In this section, we present a series of results we will use throughout the paper. 
As already mentioned, our method relies on the Combinatorial Nullstellensatz \cite{Al}, a useful algebraic tool having a wide range of applications in Combinatorics.

\begin{theo}[Alon \cite{Al}]
\label{Combnull}
Let $\mathbb{F}$ be an arbitrary field, and let $P$ be a polynomial in $\mathbb{F}[X_1,\dots,X_d]$. 
Suppose that the degree of $P$ is $\sum^d_{i=1} t_i$ and the coefficient of $\prod^d_{i=1} X_i^{t_i}$ is nonzero. 
Then, for any subsets $S_1,\dots,S_d$ of $\mathbb{F}$ with $|S_i| > t_i$, there exists $(s_1,\dots,s_d)\in S_1\times\dots\times S_d$ such that
$$P(s_1,\dots,s_d)\neq 0.$$
\end{theo}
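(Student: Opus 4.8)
The plan is to deduce this from a structural lemma about polynomials vanishing on a combinatorial grid, and then to argue by contradiction. Set $g_i(X_i) = \prod_{s \in S_i}(X_i - s)$, a univariate polynomial whose degree is $|S_i| \ge t_i + 1$ and whose leading term is $X_i^{|S_i|}$. The key auxiliary result I would establish first is the following representation: if a polynomial $Q \in \mathbb{F}[X_1,\dots,X_d]$ vanishes at every point of $S_1 \times \dots \times S_d$, then there exist polynomials $h_1,\dots,h_d$ with $\deg(h_i) \le \deg(Q) - \deg(g_i)$ such that $Q = \sum_{i=1}^d h_i g_i$.

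To prove this lemma I would perform a reduction step, variable by variable. Whenever a monomial of $Q$ contains a power $X_i^e$ with $e \ge |S_i|$, I replace $X_i^e$ using $X_i^{|S_i|} = g_i(X_i) + \bigl(X_i^{|S_i|} - g_i(X_i)\bigr)$, where the parenthesized remainder has degree strictly less than $|S_i|$; the $g_i$-part is absorbed into $h_i g_i$ with a multiplier of degree exactly $\deg(\text{monomial}) - \deg(g_i)$, while the remainder has strictly smaller $X_i$-degree and no larger total degree. Iterating over all variables terminates and yields $Q = \sum_i h_i g_i + \bar{Q}$, where $\bar{Q}$ has degree $< |S_i|$ in each $X_i$ and the bound $\deg(h_i) \le \deg(Q) - \deg(g_i)$ holds throughout since the total degree never increases. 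Because each $g_i$ vanishes on $S_i$, the reduced polynomial $\bar{Q}$ agrees with $Q$ on the whole grid, hence also vanishes there; it then remains to show $\bar{Q} = 0$. This is the multivariate analogue of the fact that a nonzero univariate polynomial of degree $< n$ has fewer than $n$ roots, which I would prove by induction on $d$, viewing $\bar{Q}$ as a polynomial in $X_d$ with coefficients in $\mathbb{F}[X_1,\dots,X_{d-1}]$ and fixing the first $d-1$ coordinates at grid points.

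With the lemma in hand, I would finish by contradiction. After shrinking each $S_i$ so that $|S_i| = t_i + 1$ exactly (possible since $|S_i| > t_i$), suppose $P$ vanishes on all of $S_1 \times \dots \times S_d$. The lemma gives $P = \sum_i h_i g_i$ with $\deg(g_i) = t_i + 1$ and $\deg(h_i) \le \deg(P) - (t_i+1) = \sum_j t_j - t_i - 1$. The crux is to track the coefficient of the top monomial $\prod_j X_j^{t_j}$, which has maximal degree $\sum_j t_j = \deg(P)$. In each product $h_i g_i$, writing this monomial as a term of $h_i$ times a power $X_i^k$ coming from $g_i$, the requirement that the residual $X_i$-exponent $t_i - k$ be nonnegative forces $k \le t_i$, while the degree bound on $h_i$ forces $\sum_j t_j - k \le \sum_j t_j - t_i - 1$, that is $k \ge t_i + 1$. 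These are incompatible, so $\prod_j X_j^{t_j}$ has coefficient $0$ in every $h_i g_i$, hence in $P$, contradicting the hypothesis that its coefficient is nonzero.

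The main obstacle is the auxiliary lemma, and more precisely the degree bookkeeping in the reduction step: one must verify that the multipliers $h_i$ respect $\deg(h_i) \le \deg(Q) - \deg(g_i)$ and that the total degree never increases, since it is exactly this bound that drives the concluding exponent computation. By contrast, the vanishing claim $\bar{Q} = 0$ is a routine induction, and the final contradiction is a short calculation once the representation is available.
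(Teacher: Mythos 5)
Your proposal is correct, but note that there is nothing in the paper to compare it against: the paper imports this statement as an external tool, citing Alon \cite{Al}, and gives no proof of it. What you have written is essentially Alon's own original two-step argument: first the representation (Nullstellensatz) half --- a polynomial $Q$ vanishing on the grid $S_1\times\dots\times S_d$ can be written as $\sum_i h_i g_i$ with $g_i(X_i)=\prod_{s\in S_i}(X_i-s)$ and $\deg(h_i)\le\deg(Q)-\deg(g_i)$ --- and then the non-vanishing half, obtained by shrinking each $S_i$ to size exactly $t_i+1$ and checking that the monomial $\prod_j X_j^{t_j}$ cannot occur in any product $h_ig_i$, since a contributing term of $g_i$ would need exponent $k\le t_i$ (so that $X_i^{t_i-k}$ makes sense) and simultaneously $k\ge t_i+1$ (from the degree bound on $h_i$). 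Your degree bookkeeping in the reduction step is sound: each replacement of $X_i^{|S_i|}$ by $g_i+(X_i^{|S_i|}-g_i)$ adds to $h_i$ a multiplier of degree at most $\deg(Q)-\deg(g_i)$, strictly lowers the $X_i$-degree of the remainder without raising total degree, and the residual polynomial $\bar{Q}$, having degree $<|S_i|$ in each variable and vanishing on the grid, is killed by the routine induction on $d$ you describe. The only detail worth making fully explicit in a written version is the termination argument for the iterated reduction (e.g., by induction on a suitable well-ordering of monomials), but this is standard and does not constitute a gap.
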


In Additive Combinatorics, this theorem provides a unified way to estimate the cardinality of all kinds of sumsets in $\mathbb{F}_p$. 
These objects often are restrictions of the \em Minkowski sum \em
$$A+B\, =\, \{a+b :\, a \in A, b\in B\}\,$$
of two nonempty subsets $A$ and $B$ of $\mathbb{F}_p$.
A collection of such estimates can be found in \cite[Chapter 9]{TaoVu}. 
For instance, a first corollary of Theorem \ref{Combnull} is the well-known Cauchy-Davenport Theorem.

\begin{theo}[Cauchy-Davenport \cite{Ca,Dav,Da2}]
Let $p$ be a prime and let $A,B$ be two nonempty subsets of $\mathbb{F}_p$. 
Then
$$|A+B| \ge \min\left\{p,|A|+|B|-1\right\}.$$
\end{theo}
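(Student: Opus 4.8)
The plan is to derive Cauchy--Davenport directly from the Combinatorial Nullstellensatz (Theorem \ref{Combnull}), exactly as the surrounding exposition suggests. First I would split into two regimes according to the size of $|A|+|B|$. If $|A|+|B| \ge p+1$, then for every $c \in \mathbb{F}_p$ the sets $A$ and $c-B=\{c-b : b\in B\}$ have cardinalities summing to more than $p$, so they must intersect; a common element $a=c-b$ gives $c=a+b\in A+B$. Hence $A+B=\mathbb{F}_p$, so $|A+B|=p=\min\{p,|A|+|B|-1\}$ and the bound holds with equality. This elementary pigeonhole argument disposes of the large case entirely.

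The substantial case is $|A|+|B|\le p$, where the claimed inequality reads $|A+B|\ge |A|+|B|-1$. Here I would argue by contradiction: assume $|A+B|\le |A|+|B|-2$ and manufacture a polynomial violating Theorem \ref{Combnull}. Setting $d=|A+B|$, consider
$$f(X,Y)=\prod_{c\in A+B}(X+Y-c),$$
a polynomial of degree $d\le|A|+|B|-2$ that vanishes at every point of $A\times B$, since $a+b\in A+B$ for all $(a,b)\in A\times B$. To meet the degree requirement of the Nullstellensatz exactly, I pad $f$ by setting
$$P(X,Y)=(X+Y)^{\,|A|+|B|-2-d}\,f(X,Y),$$
where the exponent is nonnegative by assumption. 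Then $\deg P=|A|+|B|-2=(|A|-1)+(|B|-1)$, and $P$ still vanishes identically on $A\times B$.

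It remains to verify that the coefficient of $X^{|A|-1}Y^{|B|-1}$ in $P$ is nonzero. As this monomial sits at the top degree of $P$, it suffices to inspect the leading form, which is $(X+Y)^{|A|+|B|-2}$; the coefficient in question is therefore $\binom{|A|+|B|-2}{|A|-1}$. Since we are in the regime $|A|+|B|-2\le p-2<p$, no factor of $p$ occurs in the relevant factorials, so this binomial coefficient is nonzero in $\mathbb{F}_p$. Applying Theorem \ref{Combnull} with $t_1=|A|-1$, $t_2=|B|-1$, $S_1=A$ and $S_2=B$ (which satisfy $|S_i|>t_i$) then yields a point of $A\times B$ at which $P$ does not vanish, contradicting its construction. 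Hence $|A+B|\ge|A|+|B|-1$, which together with the first regime gives the theorem.

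The one delicate point, and the step I would flag as the main obstacle, is precisely the nonvanishing of the binomial coefficient modulo $p$. This is what forces the case split at $|A|+|B|=p$: once $|A|+|B|>p$ the coefficient $\binom{|A|+|B|-2}{|A|-1}$ may vanish in $\mathbb{F}_p$ and the polynomial method collapses, which is exactly why the large regime has to be treated separately by the elementary intersection argument.
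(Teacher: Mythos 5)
Your proof is correct, and it is exactly the derivation the paper alludes to: the paper itself gives no proof of Cauchy--Davenport, citing the original references and remarking only that it is a corollary of the Combinatorial Nullstellensatz (Theorem \ref{Combnull}), which is precisely what you carry out (your padding by a power of $(X+Y)$ is a harmless variant of the usual trick of enlarging $A+B$ to a set of size $|A|+|B|-2$). The case split at $|A|+|B|\le p$, the vanishing polynomial on $A\times B$, and the nonvanishing of $\binom{|A|+|B|-2}{|A|-1}$ modulo $p$ are all handled correctly.
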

Let us mention that equality in Cauchy-Davenport Theorem only holds for highly structured sets, which are characterized by Vosper's Theorem \cite{Vo,Vo2}.
To be more precise, the essential part of Vosper's Theorem states that whenever $|A|,|B| \ge 2$, the relation
$$|A+B|=|A|+|B|-1< p-1$$
holds if and only if $A$ and $B$ are arithmetic progressions with the same difference.

Another central object of interest in Additive Combinatorics is the set of subsums 
$$\Sigma(A) \, = \, \sum_{i=1}^\ell\{0,a_i\}$$
of a sequence $A=(a_1,\dots,a_{\ell})$ over $\mathbb{F}_p$.
Thanks to the properties of some binomial determinants, a sharp lower bound for the cardinality of the set of subsums has been recently deduced from the Combinatorial Nullstellensatz \cite[Theorem 8]{EB}.
To state this result, we require the following definition. 
For any element $a \in \mathbb{F}^*_p$, the total number of appearances of $a$ and $-a$ in the sequence $A$ will be called the \em multiplicity \em of the pair $\{a,-a\}$ in $A$.

\begin{theo}
\label{ThB} 
Let $p$ be a prime and let $A=(a_1,\dots,a_\ell)$ be a sequence of $\ell \ge 1$ nonzero elements in $\mathbb{F}_p$. 
Let also $\ell_1\ge \ell_2 \ge \cdots \ge \ell_k$ be all multiplicities of the pairs $\{a,-a\}$ in $A$ listed in decreasing order.
Then
$$\left|\Sigma(A)\right|\ge \min\left\{p,1+\sum_{i=1}^ki\ell_i\right\}.$$
\end{theo}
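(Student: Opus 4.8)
The plan is to argue by contradiction through the Combinatorial Nullstellensatz (Theorem \ref{Combnull}), in the spirit of the polynomial method for restricted sumsets. Set $N=\sum_{i=1}^{k}i\ell_i$ and $m=\min\{N,p-1\}$. Since $\Sigma(A)\subseteq\mathbb{F}_p$, it suffices to prove $|\Sigma(A)|\ge m+1=\min\{p,1+N\}$, so I would suppose instead that $|\Sigma(A)|\le m$ and fix a set $T$ with $\Sigma(A)\subseteq T$ and $|T|=m$. The first step is to record the structure of $\Sigma(A)$ afforded by the pairing. Writing the distinct pairs as $\{b_1,-b_1\},\dots,\{b_k,-b_k\}$, with the $b_j$ pairwise distinct up to sign, and letting $b_j$ occur $p_j$ times and $-b_j$ occur $q_j$ times (so $p_j+q_j=\ell_j$), a subset of $A$ contributes $(s_j-t_j)b_j$ from the $j$-th pair for some $s_j\in[0,p_j]$ and $t_j\in[0,q_j]$. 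Hence $\Sigma(A)=b_1Y_1+\dots+b_kY_k$, where each $Y_j=\{-q_j,\dots,p_j\}$ is an interval of $\ell_j+1$ consecutive integers read in $\mathbb{F}_p$. Thus $\Sigma(A)$ is a Minkowski sum of $k$ dilated intervals whose dilation ratios are in general position, and the inequality to prove becomes a sharp lower bound for such a sum.

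The core is to feed the right polynomial to Theorem \ref{Combnull}. I would introduce a system of variables grouped by pair, the group attached to the $j$-th pair ranging over the interval $Y_j$ (or a refinement of it), let $L$ be the linear form recombining them with the weights $b_j$ into a generic subsum, and consider $\prod_{t\in T}(L-t)$ multiplied by a generalized (confluent) Vandermonde-type factor in the evaluation data of the groups. This auxiliary factor encodes the \emph{restricted} (Erd\H{o}s--Heilbronn / Dias da Silva--Hamidoune) nature of the problem and, crucially, inflates the available degree so that a monomial of degree $m$ can be read off while every variable keeps an exponent strictly below the size of its range. The bookkeeping of these exponents is organized as a staircase governed by the ordering $\ell_1\ge\dots\ge\ell_k$: the $j$-th pair sits at level $j$ and contributes an exponent budget governed by $j\ell_j$, which is exactly how the weight $i$ attached to $\ell_i$ in $N=\sum_i i\ell_i$ enters the count; the honest intervals $A=(1,2,\dots,k)$, for which $\Sigma(A)=\{0,1,\dots,\binom{k+1}{2}\}$ while $\sum_i i\ell_i=\binom{k+1}{2}$, show that this degree count cannot be improved. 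After expanding the auxiliary factor, the coefficient of the chosen monomial collapses to a determinant of binomial coefficients.

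The hard part, and the genuine technical heart of the argument, is to prove that this binomial determinant does not vanish in $\mathbb{F}_p$. I would evaluate it combinatorially, recognizing it through the Lindstr\"om--Gessel--Viennot lemma as the number of families of nonintersecting lattice paths between suitably chosen endpoints; this identifies the determinant with a positive integer carrying a closed product formula. It then remains to verify that this integer is a unit modulo $p$, and this is exactly where the hypothesis $m\le p-1$ is consumed: all the factorials and binomial coefficients that occur are built from integers smaller than $p$ and are therefore invertible, so the determinant is nonzero in $\mathbb{F}_p$.

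Granting the non-vanishing of the determinant, Theorem \ref{Combnull} applies, since each variable ranges over a set whose size exceeds the corresponding exponent: there is a point at which the whole polynomial is nonzero. At any such point the auxiliary factor is nonzero, which rules out the degenerate choices and guarantees that the point genuinely realizes a subset of $A$; its subsum then lies in $\Sigma(A)\subseteq T$, which forces $\prod_{t\in T}(L-t)$, and hence the whole polynomial, to vanish there. This contradiction shows $|\Sigma(A)|\ge m+1=\min\{p,1+N\}$, completing the proof.
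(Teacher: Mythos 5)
First, a point of reference: the paper does not prove this statement at all --- Theorem \ref{ThB} is quoted from \cite[Theorem 8]{EB} --- so there is no in-paper proof to compare against. Your outline does follow the strategy of that cited source (Combinatorial Nullstellensatz plus the evaluation of a binomial determinant), and your preliminary reduction is correct: writing the pairs as $\{b_j,-b_j\}$ with $b_j$ occurring $p_j$ times and $-b_j$ occurring $q_j$ times, one indeed gets $\Sigma(A)=\sum_j b_j\cdot\{-q_j,\dots,p_j\}$, a sum of dilated intervals, and your example $(1,2,\dots,k)$ correctly shows the bound is sharp.

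However, as a proof the proposal has genuine gaps, and they sit exactly at the two places you yourself label as ``the core'' and ``the hard part''. First, the polynomial is never defined. A ``generalized (confluent) Vandermonde-type factor in the evaluation data of the groups'' and a ``staircase governed by the ordering $\ell_1\ge\dots\ge\ell_k$'' are placeholders for precisely the construction that makes the argument work; without an explicit polynomial one cannot check its degree, the per-variable exponent bounds required by Theorem \ref{Combnull}, or the claim that the relevant coefficient collapses to a binomial determinant. There is even an internal inconsistency as written: once you multiply $\prod_{t\in T}(L-t)$ (degree $m$) by an auxiliary factor, the Combinatorial Nullstellensatz must be applied to a monomial of total degree $m+\deg(\mathrm{aux})$, not to ``a monomial of degree $m$'', and it is exactly the distribution of those extra exponents among the variables --- respecting $|S_i|>t_i$ --- that is supposed to produce the weights $i\ell_i$; this bookkeeping is asserted, not carried out. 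Second, the nonvanishing of the determinant modulo $p$ is the actual theorem, and your argument for it does not go through as stated: the Lindstr\"om--Gessel--Viennot lemma gives that the determinant is a nonnegative integer (even positivity requires exhibiting one nonintersecting family of paths for your specific endpoints), but a positive integer can perfectly well be divisible by $p$; closed product formulas for such path-counting determinants hold only for special endpoint configurations and would have to be established for yours, and only then could one verify that every factor involves integers smaller than $p$, which is where $m\le p-1$ must be consumed. That determinant evaluation is the technical content of \cite{EB}; waving it through means the proposal is a plan for a proof rather than a proof.
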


In this paper, we will also need structural information on sequences whose set of subsums is relatively small. 
For this purpose, the following two Vosper type results can easily be obtained from Theorem \ref{ThB}.

\begin{lem}
\label{Vosper} 
Let $p$ be a prime and let $A=(a_1,\dots,a_\ell)$ be a sequence of $\ell \ge 1$ nonzero elements in $\mathbb{F}_p$ such that 
$$\left|\Sigma(A)\right|=\ell+1 < p.$$ 
Then there exists $r\in \mathbb{F}^*_p$ such that $a_i \in \{\pm r\}$ for all $i \in [1,\ell]$.  
\end{lem}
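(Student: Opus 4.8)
The plan is to extract the conclusion directly from the subsum lower bound of Theorem \ref{ThB}, together with the elementary observation that the multiplicities of the pairs partition the length of the sequence. First I would record the identity
$$\sum_{i=1}^{k}\ell_i=\ell,$$
which simply reflects the fact that every term $a_i$ of $A$ contributes to exactly one pair $\{a,-a\}$, so the multiplicities $\ell_1,\dots,\ell_k$ count all $\ell$ elements of $A$. Here $k$ denotes the number of \emph{distinct} pairs $\{a,-a\}$ occurring in $A$, so that every $\ell_i$ satisfies $\ell_i \ge 1$.

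Next I would feed the hypothesis into Theorem \ref{ThB}. Since $|\Sigma(A)|=\ell+1<p$, the minimum appearing in that theorem cannot be attained by $p$; hence it must be the other term, giving
$$\ell+1=\left|\Sigma(A)\right|\ge 1+\sum_{i=1}^{k}i\ell_i,$$
that is, $\sum_{i=1}^{k}i\ell_i\le \ell$. Combining this with the partition identity above yields
$$0\ge \sum_{i=1}^{k}i\ell_i-\sum_{i=1}^{k}\ell_i=\sum_{i=2}^{k}(i-1)\ell_i\ge 0,$$
so the weighted sum $\sum_{i=2}^{k}(i-1)\ell_i$ vanishes. As each $\ell_i\ge 1$, this forces $k=1$: there is a single pair $\{r,-r\}$ with $r\in\mathbb{F}_p^*$, and every $a_i$ lies in $\{\pm r\}$, as claimed.

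The computation is short, so there is no serious obstacle; the only point requiring care is the justification that the minimum in Theorem \ref{ThB} is the arithmetic term rather than $p$, which is exactly what the strict inequality $\ell+1<p$ guarantees. The remaining work is the purely formal manipulation of the two inequalities above, and the use of positivity of the multiplicities to conclude $k=1$.
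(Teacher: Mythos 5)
Your proposal is correct and follows essentially the same argument as the paper: both apply Theorem \ref{ThB}, use the strict inequality $\ell+1<p$ to rule out the $p$-branch of the minimum, and combine the resulting bound with $\sum_{i=1}^k \ell_i = \ell$ to force $\sum_{i=1}^k (i-1)\ell_i = 0$. The paper phrases the conclusion as $\ell_1=\ell$ while you phrase it as $k=1$; these are equivalent given that each $\ell_i\ge 1$.
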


\begin{proof}
Denoting by $\ell_1\ge \ell_2\ge \cdots \ge \ell_k$ the multiplicities of the pairs $\{a,-a\}$ in $A$, we readily obtain $\sum_{i=1}^k\ell_i=\ell$.
Since $\ell+1 < p$,  Theorem \ref{ThB} yields
$$1+\sum_{i=1}^ki\ell_i \le \ell + 1,$$
so that $\sum_{i=1}^k(i-1)\ell_i = 0$. Then $\ell_1=\ell$, which is the desired result. 
\end{proof}

Going one step further, we can also prove the following lemma.

\begin{lem}
\label{theinv} 
Let $p$ be a prime and let $A=(a_1,\dots,a_\ell)$ be a sequence of $\ell \ge 2$ nonzero elements in $\mathbb{F}_p$ such that 
$$\left|\Sigma(A)\right|=\ell+2 < p.$$
Then one of the following two statements holds.
\begin{itemize}
\item[$(i)$] There exist $r\in \mathbb{F}^*_p$ and $i_0\in[1,\ell]$ such that $a_{i_0}\in\{\pm 2r\}$ and $a_i \in \{\pm r\}$ for all $i \neq i_0$.
\item[$(ii)$] One has $\ell=2$.
\end{itemize}
\end{lem}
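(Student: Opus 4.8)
The plan is to extract the multiplicity profile of $A$ from Theorem \ref{ThB}, and then pin down the exceptional element by a direct interval computation.

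First I would denote by $\ell_1 \ge \ell_2 \ge \cdots \ge \ell_k$ the multiplicities of the pairs $\{a,-a\}$ in $A$, so that $\sum_{i=1}^k \ell_i = \ell$ and each $\ell_i \ge 1$. Feeding the hypothesis $\left|\Sigma(A)\right|=\ell+2 < p$ into Theorem \ref{ThB} gives
$$1 + \sum_{i=1}^k i\ell_i \le \ell + 2,$$
and subtracting $1+\sum_{i=1}^k \ell_i = 1+\ell$ yields
$$\sum_{i=1}^k (i-1)\ell_i \le 1.$$
Since the left-hand side is a sum of nonnegative integers, this severely constrains the profile: either $k=1$, or $k=2$ with $\ell_2=1$ (whence $\ell_1=\ell-1$).

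Second, I would rule out $k=1$. If all the $a_i$ lay in a single pair $\{\pm r\}$, then after scaling by $r^{-1}$ (which preserves $\left|\Sigma(A)\right|$, and under which both alternatives $(i)$ and $(ii)$ are invariant) the sequence would consist of $\pm 1$'s only, say $m$ copies of $1$ and $n$ copies of $-1$, and $\Sigma(A)$ would be the interval of consecutive residues from $-n$ to $m$, of size $\ell+1 < p$. This is the forward computation behind Lemma \ref{Vosper} and contradicts $\left|\Sigma(A)\right|=\ell+2$. Hence $k\ge 2$, so necessarily $k=2$, $\ell_1=\ell-1$, $\ell_2=1$. In other words, after scaling so that $r=1$, the sequence $A$ consists of $\ell-1$ terms equal to $\pm 1$ together with a single exceptional term $c=a_{i_0}\notin\{\pm 1\}$. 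When $\ell=2$ the admissible profile $\ell_1=\ell_2=1$ lands directly in alternative $(ii)$, so I would assume $\ell\ge 3$ from here on.

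Finally I would compute $\Sigma(A)$ in this configuration. Splitting off the exceptional term gives
$$\Sigma(A) = I \cup (I+c),$$
where $I=\{-n,\dots,m\}$ is the interval of size $m+n+1=\ell$ produced by the $\pm 1$'s. Inclusion–exclusion together with $\left|\Sigma(A)\right|=\ell+2$ forces $\left|I\cap(I+c)\right|=\ell-2\ge 1$, so the two length-$\ell$ intervals overlap. The main obstacle is precisely to convert this overlap datum into the value of $c$, and it is the one step requiring care: one must check that the union of the two arcs is a single arc (which is where the overlap $\ell-2\ge 1$, valid exactly for $\ell\ge 3$, is used) and then, since $\left|I\cup(I+c)\right|=\ell+2<p$ leaves a missing residue, cut the cycle $\mathbb{F}_p$ there and lift both arcs to genuine integer intervals of length $\ell$; this legitimizes passing to $\mathbb{Z}$ and rules out a spurious value of $c$ arising through wrap-around. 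Two integer intervals of length $\ell$ overlapping in exactly $\ell-2$ points are shifted by $\pm 2$, whence $c\equiv\pm 2\pmod p$, that is $a_{i_0}\in\{\pm 2\}=\{\pm 2r\}$ after undoing the scaling. This is alternative $(i)$ and completes the proof; by contrast, the multiplicity analysis of the first two paragraphs is a short and routine consequence of Theorem \ref{ThB}.
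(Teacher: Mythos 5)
Your proposal is correct and follows essentially the same route as the paper: the multiplicity bound $\sum_{i=1}^k(i-1)\ell_i\le 1$ from Theorem \ref{ThB}, elimination of the single-pair case because it would force $\left|\Sigma(A)\right|=\ell+1$, and then the observation that $\Sigma(A)$ is a union of two arithmetic progressions of length $\ell$ with common difference $r$ whose union has size $\ell+2$, forcing the shift $a_{i_0}$ to be $\pm 2r$. The only difference is presentational: you spell out the lifting from $\mathbb{F}_p$ to integer intervals (ruling out wrap-around via $\ell+2<p$), a step the paper leaves implicit in its final sentence.
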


\begin{proof}
Denoting by $\ell_1\ge \ell_2 \ge \cdots \ge \ell_k$ the multiplicities of the pairs $\{a,-a\}$ in $A$, we readily obtain $\sum_{i=1}^k\ell_i=\ell$.
Since $\ell + 2 <p$, Theorem \ref{ThB} gives
$$1+\sum_{i=1}^ki\ell_i \le \ell + 2,$$
so that $\sum_{i=1}^k(i-1)\ell_i \le 1$. 
Then either $\ell_1=\ell$ or $(\ell_1,\ell_2)=(\ell-1,1)$.
If $\ell_1=\ell$, then there exists $r\in \mathbb{F}^*_p$ such that $a_i \in \{\pm r\}$ for all $i \in [1,\ell]$,
which implies $\left|\Sigma(A)\right|=\ell+1$, a contradiction.
Now, suppose that $(\ell_1,\ell_2)=(\ell-1,1)$ and $\ell>2$. Then there exist $r \in \mathbb{F}^*_p$ and $i_0 \in [1,\ell]$ such that $a_i \in \{\pm r\}$ for all $i \neq i_0$. 
It follows that $\Sigma(A \smallsetminus (a_{i_0}))$ and $a_{i_0}+\Sigma(A \smallsetminus (a_{i_0}))$ both are arithmetic progressions of length $\ell>2$ with same difference $r$.
Thus, for the equality $|\Sigma(A)|=|\{0,a_{i_0}\}+\Sigma(A \smallsetminus (a_{i_0}))|=\ell+2$ to hold, we must have $a_{i_0}\in\{\pm 2r\}$.
\end{proof}

\section{Ratio sets and long sequences}
\label{section : long >p}

Let $p$ be a prime and let $A$ be a sequence of $\ell \ge 1$ nonzero elements in $\mathbb{F}_p$.
In this section, we describe first our general approach to proving $\dim(A)=\ell-1$.
Then, we prove Theorem \ref{main theorem long >p} and its corollaries. 

\subsection{Ratio sets}
Let $p$ be a prime. Let also $A=(a_1,\dots,a_{\ell})$ and $B=(b_1,\dots,b_{\ell})$ be two sequences of $\ell \ge 1$ elements in $\mathbb{F}_p$ such that the elements of $A$ are nonzero.
For every $\lambda \in \mathbb{F}_p$, we consider the set 
$$I_{\lambda}=\left\{i \in [1,\ell] : b_i/a_i=\lambda\right\}.$$
Any element $\lambda \in \mathbb{F}_p$ such that $I_{\lambda} \neq \emptyset$ will be called a \em ratio \em associated with $(A,B)$. 
Now, let $\lambda_1,\dots,\lambda_d$ be the pairwise distinct ratios associated with $(A,B)$.
For every $i \in [1,d]$, we define the subsequence 
$$S_i=(a_j : j\in I_{\lambda_i}).$$
In particular, one has
$$\sum_{i=1}^d|S_i|=\ell.$$
Finally, for every $i \in [1,d]$, we introduce the \em ratio set \em
$$\Sigma_i=\Sigma(S_i).$$
It follows from Cauchy-Davenport Theorem that 
\begin{equation}
|\Sigma_i|\ge\min\{p,|S_i|+1\}.
\end{equation}

In this paper, our main argument to prove that $\dim(A)=\ell-1$ is as follows. 
First, note that such an equality holds if and only if for every $B \in \langle \mathcal{S}_A \rangle^\perp$, $A$ and $B$ are collinear, that is to say, for every sequence $B$ of $\ell$ elements in $\mathbb{F}_p$ such that $\mathcal{S}_A \subset \mathcal{S}_B$, one has $d=1$. 
In order to prove that $d=1$ indeed, properties of the ratio sets $\Sigma_i$ will be deduced from the Combinatorial Nullstellensatz. 
Before stating our first result in this direction, we observe that the ratio sets $\Sigma_i$ associated with $(A,B)$ readily have the following interesting feature. 

\begin{lem}
\label{x-x}
Let $p$ be a prime. 
Let $A,B$ be two sequences of $\ell \ge 1$ elements in $\mathbb{F}_p$ such that the elements of $A$ are nonzero and $\mathcal{S}_A \subset \mathcal{S}_B$. 
Let also $\lambda_1,\dots,\lambda_d$ be the ratios associated with $(A,B)$.
Then $\Sigma_i\cap (-\Sigma_j)=\{0\}$ for any two distinct elements $i,j \in [1,d]$.
\end{lem}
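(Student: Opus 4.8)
The plan is to argue by contradiction, exhibiting a single $0$-$1$ vector that would witness a failure of the inclusion $\mathcal{S}_A \subset \mathcal{S}_B$. First I would observe that $0$ always lies in $\Sigma_i \cap (-\Sigma_j)$, since both $\Sigma_i$ and $\Sigma_j$ contain the empty subsum; so the content of the statement is that \emph{no nonzero} element belongs to the intersection. I would therefore fix two distinct indices $i,j \in [1,d]$, suppose for contradiction that some nonzero $\sigma \in \Sigma_i \cap (-\Sigma_j)$ exists, and aim to force $\sigma = 0$.

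By definition of the ratio sets, $\sigma \in \Sigma_i = \Sigma(S_i)$ means there is a subset $J \subseteq I_{\lambda_i}$ with $\sum_{k \in J} a_k = \sigma$, and $-\sigma \in \Sigma_j = \Sigma(S_j)$ provides a subset $K \subseteq I_{\lambda_j}$ with $\sum_{k \in K} a_k = -\sigma$. The crucial structural point is that $J$ and $K$ are disjoint: because $\lambda_i \neq \lambda_j$, the index sets $I_{\lambda_i}$ and $I_{\lambda_j}$ are disjoint. This disjointness is exactly what lets me merge the two subsets into a genuine $0$-$1$ vector $x = \mathbf{1}_{J \cup K}$, with no coordinate equal to $2$, and it also makes the two subsums add cleanly over disjoint coordinates.

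Next I would verify that $x \in \mathcal{S}_A$: by construction, $A \cdot x = \sum_{k \in J} a_k + \sum_{k \in K} a_k = \sigma + (-\sigma) = 0$. Invoking the hypothesis $\mathcal{S}_A \subset \mathcal{S}_B$ then gives $x \in \mathcal{S}_B$, hence $B \cdot x = 0$. The final step is to evaluate $B \cdot x$ using the defining property of the ratios, namely $b_k = \lambda_i a_k$ for $k \in J$ and $b_k = \lambda_j a_k$ for $k \in K$. This yields $B \cdot x = \lambda_i \sigma + \lambda_j(-\sigma) = (\lambda_i - \lambda_j)\sigma$, so $(\lambda_i - \lambda_j)\sigma = 0$. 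Since $\lambda_i \neq \lambda_j$ forces $\lambda_i - \lambda_j \in \mathbb{F}^*_p$, I conclude $\sigma = 0$, a contradiction.

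There is no serious obstacle here; once the right vector is constructed the argument is a direct computation. The only point requiring care is the disjointness of $J$ and $K$, which both guarantees that $\mathbf{1}_{J \cup K}$ is an admissible $0$-$1$ solution and ensures the subsums combine without interaction. The essential mechanism is that the inclusion $\mathcal{S}_A \subset \mathcal{S}_B$ transports a zero-sum relation spread across two distinct ratio classes into the linear constraint $(\lambda_i - \lambda_j)\sigma = 0$, which can only hold when $\sigma$ vanishes.
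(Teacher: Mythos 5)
Your proof is correct and is essentially the same argument as the paper's: represent the common nonzero element by subsums over two distinct ratio classes, combine them into a single $0$-$1$ solution of $A\cdot x = 0$, and use $\mathcal{S}_A \subset \mathcal{S}_B$ to deduce $(\lambda_i - \lambda_j)\sigma = 0$, contradicting $\lambda_i \neq \lambda_j$. Your explicit verification that $J$ and $K$ are disjoint (so that $\mathbf{1}_{J \cup K}$ really is a $0$-$1$ vector) is a point the paper leaves implicit, but the mechanism is identical.
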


\begin{proof} 
By definition, we readily have $0 \in \Sigma_i$ for all $i \in [1,d]$.
Now, assume there is an element $s \in \Sigma_i \cap (-\Sigma_j)$ such that $s \neq 0$. 
By definition of $\Sigma_i$ and $\Sigma_j$, there exist $J_i \subset I_{\lambda_i}$ and $J_j \subset I_{\lambda_j}$ such that $\sum_{k \in J_i}a_k=s=-\sum_{k\in J_j}a_k$.
Since $\mathcal{S}_A \subset \mathcal{S}_B$, then
$$\sum_{k \in J_i} a_k + \sum_{k \in J_j} a_k =0$$ 
yields
$$\sum_{k \in J_i} b_k + \sum_{k \in J_j} b_k =0$$
which is equivalent to
$$\sum_{k \in J_i} \lambda_i a_k + \sum_{k \in J_j} \lambda_j a_k =0$$
so that $$(\lambda_i-\lambda_j)s =0,$$ 
which is a contradiction.
\end{proof}

On the one hand, an immediate corollary of Lemma \ref{x-x} is that whenever $d \ge 2$, one has $|\Sigma_i|<p$ and $|\Sigma_i|\ge |S_i|+1$ for all $i \in [1,d]$, so that
\begin{equation}\label{LB}
\sum_{i=1}^d(|\Sigma_i|-1)\ge \ell.
\end{equation}
On the other hand, the ratio sets also have the following general property, which will be useful throughout the paper.

\begin{theo}
\label{prop1} 
Let $p$ be a prime. 
Let $A,B$ be two sequences of $\ell \ge 1$ elements in $\mathbb{F}_p$ such that the elements of $A$ are nonzero and 
$\mathcal{S}_A \subset \mathcal{S}_B$.
Let also $\lambda_1,\dots,\lambda_d$ be the ratios associated with $(A,B)$. 
If $d \ge 2$, then
$$\sum_{i=1}^d(|\Sigma_i|-1)\le p.$$
In addition, $\sum_{i=1}^d(|\Sigma_i|-1)=p$ implies $\sum_{i=1}^d\lambda_i(|\Sigma_i|-1)=0$ in $\mathbb{F}_p$.
\end{theo}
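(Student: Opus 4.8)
The plan is to use the Combinatorial Nullstellensatz (Theorem \ref{Combnull}) applied to a cleverly chosen polynomial whose variables are indexed by the ratio classes. The key structural input is Lemma \ref{x-x}, which tells us that the sets $\Sigma_i$ are ``almost disjoint'': any two of them meet only at $0$, and moreover $\Sigma_i \cap (-\Sigma_j) = \{0\}$. The natural idea is to encode each ratio set $\Sigma_i$ by a single variable $X_i \in \mathbb{F}_p$ ranging over a set $S_i'$ of size $|\Sigma_i|$, and to build a polynomial in $X_1,\dots,X_d$ that, under the hypothesis $\sum_{i=1}^d (|\Sigma_i|-1) \le p$ \emph{failing}, would be forced by Theorem \ref{Combnull} to have a nonzero evaluation; I would then derive a contradiction with the collinearity constraint coming from $\mathcal{S}_A \subset \mathcal{S}_B$.

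First I would set $n_i = |\Sigma_i| - 1$ and assume toward a contradiction that $\sum_{i=1}^d n_i > p$, i.e. $\sum_{i=1}^d n_i \ge p+1$. The aim is to produce a selection of subset-sums, one from each ratio class, that together sum to $0$ in $A$ (hence gives an element of $\mathcal{S}_A$) while failing to sum to $0$ in $B$, contradicting $\mathcal{S}_A \subset \mathcal{S}_B$. To do this algebraically, I would introduce, for each $i \in [1,d]$, a variable $X_i$ and consider the evaluation of a product-type polynomial
$$
P(X_1,\dots,X_d) = \left(\prod_{i=1}^d \prod_{s \in \Sigma_i \setminus \{c_i\}} (X_i - s)\right)\cdot Q(X_1,\dots,X_d),
$$
where the first factor vanishes exactly off the chosen target values, and $Q$ is a linear or low-degree form encoding the two linear conditions (the $A$-sum being $0$ and the $B$-sum being nonzero, equivalently $\sum_i \lambda_i (\text{sum}_i) \neq 0$). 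The degree bookkeeping is the crucial point: the top-degree monomial $\prod_i X_i^{n_i}$ must appear with nonzero coefficient, and the sets $S_i' = \Sigma_i$ over which $X_i$ ranges satisfy $|S_i'| = n_i + 1 > n_i$, so Theorem \ref{Combnull} applies precisely when $\sum_i n_i$ exceeds $p$ after accounting for the linear factor.

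I expect the main obstacle to be two-fold. The harder half is verifying that the top coefficient is genuinely nonzero: because $Q$ carries the two linear constraints (vanishing of $\sum_i \lambda_i(\cdot)$ for the equality case and nonvanishing for the strict inequality), the leading coefficient is a determinant-like expression in the $\lambda_i$, and controlling it requires using that the $\lambda_i$ are pairwise distinct. This is also exactly where the second assertion of the theorem emerges: in the boundary case $\sum_i n_i = p$, the Nullstellensatz no longer forces a nonzero evaluation, and the degeneracy condition that prevents it is precisely $\sum_{i=1}^d \lambda_i(|\Sigma_i|-1) = 0$; so the equality statement should fall out of the vanishing of the relevant leading coefficient. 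The second obstacle is translating the polynomial nonvanishing back into a genuine $0$-$1$ solution: one must check that the selected subset-sums in distinct classes $I_{\lambda_i}$ are supported on \emph{disjoint} index sets (automatic, since the $I_{\lambda_i}$ partition $[1,\ell]$) and that their union is a bona fide element of $\mathcal{S}_A \setminus \mathcal{S}_B$, which is where Lemma \ref{x-x} and the additive structure $\Sigma_i = \Sigma(S_i)$ are used to guarantee each target value is realized by an actual subsequence. Closing this loop gives the contradiction and hence the bound $\sum_{i=1}^d(|\Sigma_i|-1) \le p$.
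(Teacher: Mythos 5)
Your high-level plan --- one variable per ratio class ranging over $\Sigma_i$, a polynomial whose nonvanishing at a grid point produces an element of $\mathcal{S}_A\setminus\mathcal{S}_B$, and the equality case read off from the vanishing of a leading coefficient --- is exactly the skeleton of the paper's argument. But the concrete polynomial you propose cannot carry it out, for two reasons. First, the degree bookkeeping fails outright: Theorem \ref{Combnull} needs a monomial $\prod_i X_i^{t_i}$ whose degree $\sum_i t_i$ equals $\deg P$ and whose exponents satisfy $t_i<|\Sigma_i|=n_i+1$, hence $\deg P\le\sum_i n_i$; your $P$ has degree $\sum_i n_i+\deg Q>\sum_i n_i$ because of the factor $\prod_{i=1}^d\prod_{s\in\Sigma_i\setminus\{c_i\}}(X_i-s)$, so no admissible monomial exists and the Nullstellensatz simply cannot be invoked on the grid $\prod_i\Sigma_i$. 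That factor is also self-defeating: on the grid it vanishes everywhere except at the single point $(c_1,\dots,c_d)$, so whether $P$ is nonzero somewhere on the grid amounts to whether $Q(c_1,\dots,c_d)\neq 0$ at a point you would have to choose in advance --- which is circular, since exhibiting such a point is the whole problem. Second, and more fundamentally, a ``linear or low-degree form $Q$'' cannot encode the condition ``the $A$-sum is $0$'' as a \emph{nonvanishing} condition; this is the missing key idea. The paper encodes it through Fermat's little theorem, via the factor $\bigl(\bigl(\sum_{i=1}^dX_i\bigr)^{p-1}-1\bigr)$, which is nonzero at $(x_1,\dots,x_d)$ precisely when $\sum_i x_i=0$. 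Nothing in your construction involves $p$ at all, so the threshold $p$ in the statement could never emerge from your degree count, contrary to your claim that the Nullstellensatz ``applies precisely when $\sum_i n_i$ exceeds $p$.''

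For comparison, the paper's proof takes $P=\bigl(\sum_{i=1}^d\lambda_iX_i\bigr)\bigl(\bigl(\sum_{i=1}^dX_i\bigr)^{p-1}-1\bigr)$, of degree exactly $p$. The hypothesis $\mathcal{S}_A\subset\mathcal{S}_B$ forces $P$ to vanish on all of $\prod_i\Sigma_i$: at a grid point with $\sum_ix_i\neq0$ the Fermat factor vanishes, and at a grid point with $\sum_ix_i=0$ the point lifts, via the partition of $[1,\ell]$ into the sets $I_{\lambda_i}$, to an element of $\mathcal{S}_A\subset\mathcal{S}_B$, whence $\sum_i\lambda_ix_i=0$. (Note that Lemma \ref{x-x} is not needed here, contrary to what you suggest; it is used elsewhere, to get the lower bound $\sum_i(|\Sigma_i|-1)\ge\ell$.) If $\sum_i n_i>p$, one picks exponents $t_i\le n_i$ with $\sum_it_i=p$ and with slack at some index $j_0$ while $t_{i_0}>0$, and compares the coefficients of $\prod_iX_i^{t_i}$ and of the neighbouring monomial obtained by moving one unit from $i_0$ to $j_0$: both coefficients are nonzero multiples of $\sum_i\lambda_it_i$ and $\sum_i\lambda_it_i'$ respectively, and if both vanished one would get $\lambda_{i_0}=\lambda_{j_0}$, contradicting distinctness of the ratios. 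So some degree-$p$ coefficient is nonzero, and Theorem \ref{Combnull} contradicts the vanishing of $P$ on the grid; this gives $\sum_i n_i\le p$. When $\sum_i n_i=p$ the only admissible exponents are $t_i=n_i$, and the same theorem forces that coefficient, proportional to $\sum_i\lambda_in_i$, to vanish --- which is the second assertion. Your proposal correctly anticipates this last mechanism and the translation of a grid point into a $0$-$1$ solution, but without the Fermat factor the argument as you set it up fails at the first step.
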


\begin{proof}
Let $\lambda_1,\dots,\lambda_d$ be the ratios associated with $(A,B)$ and assume $d \ge 2$. 
We shall consider the polynomial
$$P(X_1,\dots,X_d)=\left(\sum_{i=1}^d\lambda_i X_i\right)\left(\left(\sum_{i=1}^dX_i\right)^{p-1}-1\right),$$
which has degree $p$. 
Since $\mathcal{S}_A \subset \mathcal{S}_B$, $P$ is easily seen to vanish on $\prod_{i=1}^d \Sigma_i$.
In addition, the monomials of degree $p$ in $P$ are the same as those of
\begin{align*}
 \left(\sum_{i=1}^d\lambda_i X_i\right)\left(\sum_{i=1}^dX_i\right)^{p-1}= & \left(\sum_{i=1}^d\lambda_i X_i\right)\left(\sum_{\substack{(t_1,\dots,t_d)\\\sum_{i=1}^d t_i=p-1}}\frac{(p-1)!}{\prod_{i=1}^d t_i!}\prod_{i=1}^d X_i^{t_i}\right)\\
 = & \sum_{i=1}^d \lambda_iX_i^p+\sum_{\substack{(t_1,\dots,t_d)\\\sum_{i=1}^d t_i=p \\ \max \{t_i\}<p}}\frac{(p-1)!}{\prod_{i=1}^d t_i!} \left(\sum_{i=1}^d\lambda_it_i\right)\prod_{i=1}^dX_i^{t_i}.
\end{align*}

Assume $\sum_{i=1}^d(|\Sigma_i|-1)>p$. 
Then there exists $(t_i)_{i \in [1,d]}$ such that $\sum^d_{i=1}t_i=p$, where $0 \le t_i \le |\Sigma_i|-1$ for all $i \in [1,d]$, and two distinct elements $i_0,j_0 \in [1,d]$ such that $t_{i_0} > 0 $ and $t_{j_0} < |\Sigma_{j_0}|-1$.
We define $(t'_i)_{i \in [1,d]}$ by $t'_{i_0}=t_{i_0}-1$, $t'_{j_0}=t_{j_0}+1$ and $t'_i=t_i$ otherwise.

Now, suppose that the coefficients of $\prod_{i=1}^dX_i^{t_i}$ and $\prod_{i=1}^dX_i^{t'_i}$ in $P$ both vanish. 
Then, both sums $\sum_{i=1}^d\lambda_it_i$ and $\sum_{i=1}^d\lambda_it'_i$ are zero, so that
$$\sum_{i=1}^d\lambda_it_i-\sum_{i=1}^d\lambda_it'_i=\lambda_{i_0}-\lambda_{j_0}=0,$$
which is a contradiction. 
Thus, one of these coefficients is nonzero, and by Theorem \ref{Combnull}, $P$ cannot vanish on $\prod_{i=1}^d \Sigma_i$, a contradiction too.
It follows that
$$\sum_{i=1}^d(|\Sigma_i|-1) \le p.$$
In addition, if one has $\sum_{i=1}^d(|\Sigma_i|-1)=p$, then since $P$ vanishes on $\prod_{i=1}^d \Sigma_i$, Theorem \ref{Combnull} implies that the coefficient of $\prod_{i=1}^dX_i^{|\Sigma_i|-1}$ in $P$ is zero, that is to say $\sum_{i=1}^d\lambda_i(|\Sigma_i|-1)=0 \text{ in } \mathbb{F}_p.$
\end{proof}

\subsection{The case of long sequences}
Thanks to Theorem \ref{prop1}, it is now easy to deduce our first theorem and its corollaries.

\begin{proof}[Proof of Theorem \ref{main theorem long >p}]
Let $A$ be a sequence of $\ell > p$ nonzero elements in $\mathbb{F}_p$.
For any given sequence $B$ of $\ell > p$ elements in $\mathbb{F}_p$ such that $\mathcal{S}_A\subset\mathcal{S}_B$,
let $\lambda_1,\dots,\lambda_d$ be the ratios associated with $(A,B)$ and assume $d \ge 2$.
On the one hand, the inequality (\ref{LB}) implies that $\sum_{i=1}^d(|\Sigma_i|-1)\ge \ell>p$. 
On the other hand, it follows from Theorem \ref{prop1} that $\sum_{i=1}^d(|\Sigma_i|-1)\le p$, which is a contradiction.
Therefore $d=1$, and the desired result is proved. 
\end{proof}

Corollary \ref{long >p characterisation} now is a straightforward consequence of Theorem \ref{main theorem long >p}.
In addition, one can easily notice that Corollary \ref{long >p minimales} follows directly from Theorem \ref{main theorem long >p} and the following general proposition. 

\begin{prop}
\label{mimimini} 
Let $p$  be a prime and let $A$ be a sequence of $\ell \ge 1$ nonzero elements in $\mathbb{F}_p$ such that $\dim(A)=\ell-1$.  
Then there exists a basis of $A^\perp$ consisting of minimal elements of $\mathcal{S}_A$.
\end{prop}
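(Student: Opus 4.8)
The plan is to show that the minimal elements of $\mathcal{S}_A$ already span $\langle\mathcal{S}_A\rangle$, and then to combine this with the hypothesis $\dim(A)=\ell-1$ and a standard linear algebra fact to extract a basis of $A^\perp$ from among them. Throughout, I identify an element of $\mathcal{S}_A$ with its support, a subset of $[1,\ell]$, and write $\mathrm{supp}(x)$ for the support of a $0$-$1$ vector $x$.

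The crucial ingredient is a decomposition property: every nonzero element of $\mathcal{S}_A$ can be written as a sum of minimal elements of $\mathcal{S}_A$ with pairwise disjoint supports. To establish this, I would first record that $\mathcal{S}_A$ is stable under \emph{nested difference}: if $x,y\in\mathcal{S}_A$ with $\mathrm{supp}(y)\subset\mathrm{supp}(x)$, then $y_i=1$ forces $x_i=1$, so $x-y$ is again a $0$-$1$ vector, with support $\mathrm{supp}(x)\setminus\mathrm{supp}(y)$, and $A\cdot(x-y)=A\cdot x-A\cdot y=0$; hence $x-y\in\mathcal{S}_A$. I would then argue by induction on $|\mathrm{supp}(x)|$ for $x\in\mathcal{S}_A\setminus\{0\}$. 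If $x$ is minimal we are done; otherwise there is a minimal $y\in\mathcal{S}_A$ with $\mathrm{supp}(y)\subsetneq\mathrm{supp}(x)$, and the decomposition $x=y+(x-y)$ exhibits $x$ as a disjoint union of $y$ and the strictly smaller element $x-y\in\mathcal{S}_A$, to which the induction hypothesis applies.

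From this decomposition it follows at once that the set $M$ of minimal elements of $\mathcal{S}_A$ satisfies $\langle M\rangle=\langle\mathcal{S}_A\rangle$: every element of $\mathcal{S}_A$ lies in $\langle M\rangle$ by the decomposition, so $\langle\mathcal{S}_A\rangle\subset\langle M\rangle$, while the reverse inclusion is trivial since $M\subset\mathcal{S}_A$. Invoking the hypothesis $\dim(A)=\dim(\langle\mathcal{S}_A\rangle)=\ell-1=\dim(A^\perp)$ together with $\langle\mathcal{S}_A\rangle\subset A^\perp$ forces $\langle\mathcal{S}_A\rangle=A^\perp$, whence $\langle M\rangle=A^\perp$. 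Finally, since $M$ is a spanning family of the $(\ell-1)$-dimensional space $A^\perp$, it contains a basis of $A^\perp$, by the standard fact that any spanning family of a finite-dimensional vector space can be refined to a basis. This produces the desired basis of $A^\perp$ consisting of minimal elements of $\mathcal{S}_A$.

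There is no genuine obstacle in this argument; the only point demanding care is the stability of $\mathcal{S}_A$ under nested difference, which is exactly what guarantees that peeling off a minimal element keeps us inside $\mathcal{S}_A$ and thereby drives the induction underlying the decomposition.
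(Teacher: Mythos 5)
Your proof is correct and follows essentially the same route as the paper's: both rest on the fact that every nonzero element of $\mathcal{S}_A$ is a disjoint union of minimal elements, combined with the dimension count $\langle\mathcal{S}_A\rangle = A^\perp$ and the extraction of a basis from a spanning family. The only difference is that you prove the decomposition fact explicitly (via closure of $\mathcal{S}_A$ under nested difference and induction on the support), whereas the paper asserts it as evident and applies it only to a chosen basis of $A^\perp$ inside $\mathcal{S}_A$.
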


\begin{proof}
Let $(e_1,\dots,e_{\ell-1})$ be a basis of $A^\perp$ consisting of elements in $\mathcal{S}_A$. 
Let $F$ be the set of all minimal elements in $\mathcal{S}_A$ that are contained in at least one $e_i$.
Now, since each $e_i$ is a disjoint union of minimal elements in $\mathcal{S}_A$, we obtain $\langle \mathcal{S}_A \rangle \subset\langle F\rangle \subset A^\perp$. 
Since $\dim(A)=\ell-1$, it follows that $\langle F\rangle=A^\perp$ and $F$ contains a basis of $A^{\perp}$.
\end{proof}

\section{Sequences of length $p$}
\label{section : long p}

Let $p$ be a prime and let $\ell \le p$. 
In this section, we introduce a special class of sequences $A$ of $\ell$ nonzero elements in $\mathbb{F}_p$ for which $\dim(A) < \ell-1$ readily holds. 
In the case $\ell=p$, we determine all sequences in this class. 
We then refine the approach of Section \ref{section : long >p} to prove that $\dim(A)=p-1$ for all remaining sequences.

\subsection{Exceptional and regular sequences}
Let $A$ be a sequence of $\ell \le p$ nonzero elements in $\mathbb{F}_p$. 
Then, $A$ will be called \em exceptional \em whenever there exist two distinct elements $i,j \in [1,\ell]$ such that for all $x \in \mathcal{S}_A$, one has
$$|\{i,j\}\cap x|\in\{0,2\}.$$
Any sequence which is not exceptional will be called \em regular\em.
In others terms, $A$ is a regular sequence if and only if for any two distinct elements $i,j \in [1,\ell]$, there is $x \in \mathcal{S}_A$ such that $|\{i,j\}\cap x| = 1$.

\medskip
On the one hand, regular sequences have the following useful property.

\begin{lem}
\label{thecut} 
Let $p$ be a prime. 
Let $A$ be a regular sequence of $\ell$ elements in $\mathbb{F}^*_p$, and $B$ be a sequence of $\ell$ elements in $\mathbb{F}_p$ such that $\mathcal{S}_A\subset\mathcal{S}_B$. 
Then
$$\forall i,j \in [1,\ell], \, \, \, (a_i=a_j) \implies (b_i=b_j).$$
\end{lem}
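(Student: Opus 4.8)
The plan is to use the defining property of a regular sequence to separate the two indices under consideration, and then to exploit a coordinate swap that leaves the $A$-equation unchanged precisely because the two coefficients coincide. So, fix two distinct indices $i,j \in [1,\ell]$ with $a_i=a_j$. Since $A$ is regular, by definition there exists some $x \in \mathcal{S}_A$ with $|\{i,j\} \cap x| = 1$; by symmetry I may assume that $i$ lies in the support of $x$ while $j$ does not, i.e. $x_i = 1$ and $x_j = 0$.

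Next, I would build a second solution from $x$ by exchanging the coordinates $i$ and $j$. Define $x' \in \{0,1\}^\ell$ by $x'_i = 0$, $x'_j = 1$, and $x'_k = x_k$ for every $k \notin \{i,j\}$. Computing the inner product, one gets
$$A \cdot x' = A \cdot x - a_i + a_j = A \cdot x,$$
where the last equality uses $a_i = a_j$. Since $A \cdot x = 0$, this shows $A \cdot x' = 0$, hence $x' \in \mathcal{S}_A$.

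Finally, I would feed both solutions into the hypothesis $\mathcal{S}_A \subset \mathcal{S}_B$. This gives $B \cdot x = 0$ and $B \cdot x' = 0$, and subtracting yields
$$0 = B \cdot x - B \cdot x' = b_i - b_j,$$
that is, $b_i = b_j$, as desired.

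The argument is short, and its only genuine ingredient---the one step that could be called the obstacle---is the very first: the existence of a $0$-$1$ solution distinguishing $i$ from $j$. This is exactly where regularity enters, and it is indispensable. For an exceptional pair $\{i,j\}$, every solution contains both or neither of these indices, so no comparison of the above form is possible, and indeed the conclusion can genuinely fail. The subsequent swap step is purely formal but relies in an essential way on the equality $a_i = a_j$, without which the passage from $A \cdot x$ to $A \cdot x'$ would pick up the nonzero term $a_j - a_i$.
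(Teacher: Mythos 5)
Your proof is correct and is essentially the same as the paper's: both use regularity to produce $x \in \mathcal{S}_A$ separating $i$ from $j$, form the swapped solution $x' = (x \smallsetminus \{i\}) \cup \{j\}$ (valid since $a_i = a_j$), and subtract $B \cdot x - B \cdot x'$ to conclude $b_i = b_j$. You merely spell out the verification that $x' \in \mathcal{S}_A$, which the paper leaves implicit.
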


\begin{proof}
Let $i,j$ be two distinct elements of $[1,\ell]$ such that $a_i=a_j$. 
Since $A$ is regular, there exists $x \in \mathcal{S}_A$ such that $|\{i,j\}\cap x| = 1$, say $i \in x$ and $j \notin x$.
Therefore, $x' = (x \smallsetminus \{i\}) \cup \{j\} \in \mathcal{S}_A$.
By assumption, it follows that $x,x' \in \mathcal{S}_B$ and $B \cdot x - B \cdot x'=b_i-b_j=0$, which is the desired result. 
\end{proof}

On the other hand, one can notice that $\dim(A) < \ell -1$ readily holds whenever $A$ is exceptional.
However, it turns out that for large values of $\ell$, exceptional sequences are highly structured and can easily be determined. 
For instance, the following lemma fully characterizes all exceptional sequences in the case $\ell=p$.

\begin{prop}
\label{degeneratecase l=p} 
Let $p$ be a prime and let $A=(a_1,\dots,a_p)$ be an exceptional sequence.
Then one of the following statements holds.
\begin{itemize}
\item[$(i)$] $\dim(A)=1$ and there exists $r \in \mathbb{F}^*_p$ such that
$$(a_1,\dots,a_p)=(r,\dots,r).$$
\item[$(ii)$] $\dim(A)=p-2$ and there exist $t \in [1,p-3]$, $\sigma\in\mathfrak{S}_p$, $r \in \mathbb{F}^*_p$ such that
$$(a_{\sigma(1)},\dots,a_{\sigma(p)})=(\underbrace{r,\dots,r}_{t},\underbrace{-r,\dots,-r}_{p-2-t},-(t+1)r,-(t+1)r).$$
\end{itemize}
\end{prop}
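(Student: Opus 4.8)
The plan is to recast the exceptional condition in terms of subsums and then to invoke the Vosper type Lemma~\ref{Vosper}. Let $i,j$ be two witnesses, so that every $x \in \mathcal{S}_A$ satisfies $x_i = x_j$; after applying a permutation $\sigma \in \mathfrak{S}_p$ we may assume $i = p-1$ and $j = p$. Write $A_0 = (a_k : 1 \le k \le p-2)$ for the subsequence of the remaining $p-2$ nonzero elements. A $0$-$1$ solution with $x_{p-1} = 1$ and $x_p = 0$ is exactly a subset of $A_0$ summing to $-a_{p-1}$, and symmetrically for $x_{p-1} = 0$, $x_p = 1$; hence $A$ is exceptional with these witnesses if and only if
$$-a_{p-1} \notin \Sigma(A_0) \quad\text{and}\quad -a_p \notin \Sigma(A_0).$$

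First I would prove that necessarily $a_{p-1} = a_p$. If not, then $-a_{p-1}$ and $-a_p$ are two distinct nonzero elements avoided by $\Sigma(A_0)$, whence $|\Sigma(A_0)| \le p-2$; but the Cauchy--Davenport Theorem, applied to the two-element sets $\{0,a_k\}$, gives $|\Sigma(A_0)| \ge |A_0| + 1 = p-1$, a contradiction. So I set $a := a_{p-1} = a_p$, and the exceptional condition now reads $\Sigma(A_0) = \mathbb{F}_p \smallsetminus \{-a\}$, that is $|\Sigma(A_0)| = p-1 = |A_0| + 1 < p$.

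Since we are now in the equality case of the subsum bound, Lemma~\ref{Vosper} applies and produces $r \in \mathbb{F}_p^*$ with $a_k \in \{\pm r\}$ for all $k \le p-2$. Letting $t$ be the number of indices $k$ with $a_k = r$, a direct computation gives $\Sigma(A_0) = \{mr : -(p-2-t) \le m \le t\}$, whose unique missing residue is $(t+1)r$; matching this with $-a$ forces $a = -(t+1)r$. Thus, up to reordering, $A$ consists of $t$ copies of $r$, $p-2-t$ copies of $-r$, and two copies of $-(t+1)r$. The boundary values $t=0$ and $t=p-2$ both yield the constant sequence of case $(i)$, whereas $1 \le t \le p-3$ (where both $r$ and $-r$ actually occur) yields the non-constant shape of case $(ii)$.

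It remains to compute $\dim(A)$. In case $(i)$ one checks directly that $\mathcal{S}_A = \{0,(1,\dots,1)\}$, so $\dim(A)=1$. In case $(ii)$, every solution satisfies $x_{p-1}=x_p$, hence $\langle \mathcal{S}_A\rangle \subseteq V := \{x \in A^\perp : x_{p-1}=x_p\}$; since $x_{p-1}=x_p$ is not a consequence of membership in $A^\perp$, we have $\dim V = p-2$, giving the upper bound. For the reverse inequality, the solutions $\{i^+,j^-\}$ with $i^+$ carrying $r$ and $j^-$ carrying $-r$ span (using $1 \le t \le p-3$, so that both $r$ and $-r$ occur) a subspace $W$ of dimension $p-3$ on which $x_{p-1}=x_p=0$; adjoining one solution that contains both positions $p-1$ and $p$ — which exists because $2(t+1)r$ is a subsum of $A_0$, being distinct from the unique non-subsum $(t+1)r$ — produces a vector outside $W$, so that $\dim(A) \ge (p-3)+1 = p-2$ and equality follows. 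The main obstacle I expect is precisely this lower bound in case $(ii)$: one must verify that the bipartite solutions span a space of dimension exactly $p-3$ and that a solution using both repeated elements genuinely exists for every admissible $t$. By contrast, the reduction of the exceptional condition to the two subsum statements is routine, and the dichotomy $a_{p-1}=a_p$ followed by the application of Lemma~\ref{Vosper} is the conceptual heart of the argument.
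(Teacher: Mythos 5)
Your proposal is correct and follows essentially the same route as the paper: delete the two witness positions, use Cauchy--Davenport to force $a_i=a_j$ and $|\Sigma(A_0)|=p-1$, invoke Lemma~\ref{Vosper} to get $a_k\in\{\pm r\}$, and identify the missing subsum $(t+1)r$ to pin down the repeated value. The only cosmetic difference is in the dimension count for case $(ii)$: the paper exhibits an explicit basis (the bipartite pairs together with the all-ones vector, which is itself a solution since $\sigma(A)=0$), whereas you argue abstractly that the bipartite span has dimension $p-3$ and adjoin an arbitrary solution through both special positions --- both arguments are valid and equivalent in substance.
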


\begin{proof}
Since $A$ is exceptional, there exist two distinct elements $i,j \in [1,p]$ such that for all $x \in \mathcal{S}_A$, one has $|\{i,j\}\cap x|\in\{0,2\}$.
Now, let $A'$ be the sequence obtained from $A$ by deleting $a_i$ and $a_j$. 
Then, Cauchy-Davenport Theorem gives
$$\left|\Sigma(A')\right|\ge \min\{p,(p-2)+1\}=p-1.$$
By assumption, one has $-a_i \notin \Sigma(A')$ and $-a_j \notin \Sigma(A')$, so that $|\Sigma(A')|=p-1$ and $a_i=a_j$. 
Thus, by Lemma \ref{Vosper}, there exists $r\in \mathbb{F}^*_p$ such that $a_k \in \{\pm r\}$ for all $k \notin \{i,j\}$.
If $a_k=r$ for all $k \notin \{i,j\}$, then one has $-a_i=(p-1)r$, which gives $a_i=a_j=r$. 
Therefore, $A$ is of the form given in $(i)$, and it can easily be checked that $\dim(A)=p-1$.
Note that the same conclusion holds if $a_k=-r$ for all $k \notin \{i,j\}$.
Otherwise, there is $t \in [1,p-3]$ such that $A'$ consists of $t$ copies of $r$ and $p-2-t$ copies of $-r$.
In this case, we obtain $a_i=a_j=-(t+1)r$, so that, by relabelling if necessary, $A$ is of the form given in $(ii)$.
The following table then gives a basis of $\langle \mathcal{S}_A \rangle$.

\[\begin{array}{r|ccc|ccc|cc|}
\multicolumn{1}{c}{} & \multicolumn{3}{c}{\overbrace{\rule{1.5cm}{0pt}}^{t}} & \multicolumn{3}{c}{\overbrace{\rule{1.8cm}{0pt}}^{p-2-t}}\\
\cline{2-9}
& r & \dots & r & -r & \dots &-r & -(t+1)r & -(t+1)r\\
\cline{2-9}
\ldelim\{{3}{16mm}[$p-2-t$] & 1 &       &    & 1 &    &   (0)  &   & \\
& \vdots  & (0) &  &  &   \ddots  & &   &  \\
\ldelim\{{3}{4mm}[$t$] & 1 &   &  &  &  & 1 &(0) &(0) \\
 &  & \ddots &  &  & (0)  & \vdots &   &  \\
& (0) &      & 1  &  &  &  1  &  &  \\
\cline{2-9}
& 1 & \dots & 1 & 1 &  \dots & 1 & 1 & 1\\
\cline{2-9}
\end{array}\]
\end{proof}

\subsection{Another property of ratio sets}
Before proving Theorem \ref{main theorem long p}, we need the following general result on ratio sets, which will not only be useful in the case $\ell=p$, but in subsequent sections also.

\begin{prop}
\label{pushpull} 
Let $p$ be a prime. 
Let $A,B$ be two sequences of $\ell \ge 1$ elements in $\mathbb{F}_p$ such that the elements of $A$ are nonzero and $\mathcal{S}_A \subset \mathcal{S}_B$.
Let also $\lambda_1,\dots,\lambda_d$ be the ratios associated with $(A,B)$ and assume $d \ge 2$.
Given any two distinct elements $i_0,j_0 \in[1,d]$ and $r_{i_0}\in S_{i_0}$, define 
$$\Sigma'_{i_0}=\Sigma(S_{i_0}\smallsetminus(r_{i_0})),\ \Sigma'_{j_0}=\Sigma(S_{j_0}\cup(r_{i_0}))=\Sigma_{j_0}+\{0,r_{i_0}\}$$
and $\Sigma'_i=\Sigma_i$ otherwise.
Then the polynomial 
$$Q_{i_0,j_0}(X_1,\dots,X_d)=\left(\sum_{i=1}^d\lambda_i X_i\right)\left(\sum_{i=1}^d\lambda_i X_i-\chi\right)\left(\left(\sum_{i=1}^dX_i\right)^{p-1}-1\right),$$
where $\chi=(\lambda_{j_0}-\lambda_{i_0})r_{i_0}$, vanishes on $\prod_{i=1}^d\Sigma'_i$.

In addition, the coefficient of a monomial $\prod_{i=1}^dX_i^{t_i}$ in $Q_{i_0,j_0}$ with $\sum_{i=1}^dt_i=p+1$ and $\max\{t_i\}<p$ is
$$\frac{(p-1)!}{\prod_{i=1}^d t_i!}\left(\left(\sum_{i=1}^d\lambda_it_i\right)^2-\left(\sum_{i=1}^d\lambda^2_it_i\right)\right).$$
\end{prop}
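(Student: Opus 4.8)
The plan is to split the statement into its two assertions and treat them separately. The first assertion---that $Q_{i_0,j_0}$ vanishes on $\prod_{i=1}^d\Sigma'_i$---is where the combinatorial content lies, and I expect it to be the main obstacle; the second assertion is a direct coefficient computation that I would carry out afterwards.

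For the vanishing statement I would fix a point $(s_1,\dots,s_d)\in\prod_{i=1}^d\Sigma'_i$ and show that one of the three factors of $Q_{i_0,j_0}$ is zero. Since $(\sum_i X_i)^{p-1}-1$ already vanishes at the point unless $\sum_{i=1}^d s_i=0$, it suffices to treat points with $\sum_i s_i=0$ and to prove that there $\sum_i\lambda_i s_i\in\{0,\chi\}$. The key is to translate each coordinate back into a choice of indices: for $i\neq i_0,j_0$ write $s_i=\sum_{k\in J_i}a_k$ with $J_i\subset I_{\lambda_i}$; for $i_0$ write $s_{i_0}=\sum_{k\in J_{i_0}}a_k$ with $J_{i_0}\subset I_{\lambda_{i_0}}\setminus\{k_0\}$, where $k_0\in I_{\lambda_{i_0}}$ is an index realizing the removed value $r_{i_0}=a_{k_0}$; and for $j_0$ use the decomposition $\Sigma'_{j_0}=\Sigma_{j_0}+\{0,r_{i_0}\}$ to write $s_{j_0}=\sum_{k\in J_{j_0}}a_k+\varepsilon\,r_{i_0}$ with $J_{j_0}\subset I_{\lambda_{j_0}}$ and $\varepsilon\in\{0,1\}$.

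The heart of the argument is then a case split on $\varepsilon$. If $\varepsilon=0$, the disjoint union $x=\bigcup_i J_i$ is a $0$-$1$ vector with $A\cdot x=\sum_i s_i=0$, hence $x\in\mathcal{S}_A\subset\mathcal{S}_B$, and computing $B\cdot x=\sum_i\lambda_i s_i$ forces $\sum_i\lambda_i s_i=0$. If $\varepsilon=1$, I instead adjoin the index $k_0$ and set $x=\{k_0\}\cup\bigcup_i J_i$; since $k_0\notin J_{i_0}$ and $k_0\notin I_{\lambda_i}$ for $i\neq i_0$, this is again a genuine $0$-$1$ vector, and $A\cdot x=a_{k_0}+\sum_i s_i-r_{i_0}=\sum_i s_i=0$ places $x$ in $\mathcal{S}_A\subset\mathcal{S}_B$. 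Tracking the contribution of $k_0$ to $B\cdot x$ (via $b_{k_0}=\lambda_{i_0}r_{i_0}$) then yields $B\cdot x=\sum_i\lambda_i s_i-(\lambda_{j_0}-\lambda_{i_0})r_{i_0}=\sum_i\lambda_i s_i-\chi=0$, i.e. $\sum_i\lambda_i s_i=\chi$. In both cases the first or the second factor of $Q_{i_0,j_0}$ vanishes, completing the proof. The delicate points I would watch are the disjointness of the index sets and the bookkeeping that produces exactly the shift $\chi$.

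For the coefficient formula I would write $L=\sum_i\lambda_i X_i$ and $S=\sum_i X_i$, so that $Q_{i_0,j_0}=L^2S^{p-1}-\chi L S^{p-1}-L^2+\chi L$. Only the first summand has degree $p+1$, so for a monomial $\prod_i X_i^{t_i}$ with $\sum_i t_i=p+1$ the coefficient comes entirely from $L^2S^{p-1}$. Expanding $L^2=\sum_{a,b}\lambda_a\lambda_b X_aX_b$ and combining with the multinomial expansion of $S^{p-1}$, the coefficient equals $\frac{(p-1)!}{\prod_i t_i!}$ times $\sum_{a\neq b}\lambda_a\lambda_b t_a t_b+\sum_a\lambda_a^2 t_a(t_a-1)$, the hypothesis $\max\{t_i\}<p$ guaranteeing that each $t_i!$ is invertible in $\mathbb{F}_p$. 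A short rearrangement using $\sum_{a,b}\lambda_a\lambda_b t_a t_b=(\sum_i\lambda_i t_i)^2$ collapses the bracket to $(\sum_i\lambda_i t_i)^2-\sum_i\lambda_i^2 t_i$, which is exactly the claimed expression.
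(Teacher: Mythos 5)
Your proposal is correct and takes essentially the same route as the paper: your case split on $\varepsilon$ is precisely the paper's argument, which handles points of $\prod_{i=1}^d\Sigma_i$ via the divisibility of $Q_{i_0,j_0}$ by $P=\left(\sum_i\lambda_iX_i\right)\left(\left(\sum_iX_i\right)^{p-1}-1\right)$ and the remaining points by shifting $r_{i_0}$ from coordinate $j_0$ back to coordinate $i_0$ (your adjunction of the index $k_0$, phrased at the level of values rather than index sets). The coefficient computation via the multinomial expansion of $\left(\sum_i\lambda_iX_i\right)^2\left(\sum_iX_i\right)^{p-1}$, with $\max\{t_i\}<p$ ensuring the factorials are invertible, is likewise identical to the paper's.
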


\begin{proof}
Since $\mathcal{S}_A \subset \mathcal{S}_B$, the polynomial
$$P(X_1,\dots,X_d)=\left(\sum_{i=1}^d\lambda_i X_i\right)\left(\left(\sum_{i=1}^dX_i\right)^{p-1}-1\right)$$
vanishes on $\prod_{i=1}^d \Sigma_i$ and divides $Q_{i_0,j_0}$. 
Therefore, $Q_{i_0,j_0}$ vanishes on $\prod_{i=1}^d \Sigma_i$.

Now, for any $(x'_i)_{i \in [1,d]}\in\left(\prod_{i=1}^d \Sigma'_i\right)\smallsetminus\left(\prod_{i=1}^d \Sigma_i\right)$, 
let us consider $(x_i)_{i \in [1,d]}$ defined by $x_{i_0}=x'_{i_0}+r_{i_0}$, $x_{j_0}=x'_{j_0}-r_{i_0}$ and $x_i=x'_i$ otherwise, which is an element of $\prod_{i=1}^d \Sigma_i$.
Note that $\sum_{i=1}^dx'_i=\sum_{i=1}^dx_i$.
In addition, since $\mathcal{S}_A \subset \mathcal{S}_B$, the equality $\sum_{i=1}^dx'_i=0$ implies $\sum_{i=1}^d\lambda_ix_i=0$, so that
\begin{align*}
\sum_{i=1}^d\lambda_ix'_i = & \lambda_{i_0}(x_{i_0}-r_{i_0})+\lambda_{j_0}(x_{j_0}+r_{i_0})+\sum_{\substack{i=1\\i\neq i_0,\ i\neq j_0}}^d\lambda_ix_i\\
= & (\lambda_{j_0}-\lambda_{i_0})r_{i_0}+\sum_{i=1}^d\lambda_ix_i\\
= & (\lambda_{j_0}-\lambda_{i_0})r_{i_0},
\end{align*}
which proves that $Q_{i_0,j_0}$ vanishes on $\prod_{i=1}^d\Sigma'_i$.
Finally, $Q_{i_0,j_0}$ clearly has degree $p+1$ and the monomials of degree $p+1$ in $Q_{i_0,j_0}$ are the same as those of 

\begin{align*}
   & \left(\sum_{i=1}^d\lambda_i X_i\right)^2\left(\sum_{i=1}^dX_i\right)^{p-1}\\
 = & \left(\sum_{i=1}^d\lambda^2_i X^2_i+2\sum_{1\le i<j\le d}\lambda_i\lambda_j X_iX_j\right)\left(\sum_{\substack{(t_1,\dots,t_d)\\\sum_{i=1}^d t_i=p-1}}\frac{(p-1)!}{\prod_{i=1}^d t_i!}\prod_{i=1}^d X_i^{t_i}\right)\\
 = & \sum_{i=1}^d \lambda_i^2X_i^{p+1} + \sum_{i=1}^d \sum^d_{\substack{j=1 \\ j \neq i}}(-\lambda_i^2+2\lambda_i\lambda_j)X_i^pX_j\\
   & \hspace{0.5cm}+\sum_{\substack{(t_1,\dots,t_d)\\\sum_{i=1}^d t_i=p+1\\\max\{t_i\}<p}}\frac{(p-1)!}{\prod_{i=1}^d t_i!} \left(\sum_{i=1}^d\lambda_i^2t_i(t_i-1)+2\sum_{1\le i<j\le d}\lambda_i\lambda_j t_it_j\right)\prod_{i=1}^dX_i^{t_i}\\
 = & \sum_{i=1}^d \lambda_i^2X_i^{p+1}+ \sum_{i=1}^d \sum^d_{\substack{j=1 \\ j \neq i}}\lambda_i(2\lambda_j-\lambda_i)X_i^pX_j\\
   & \hspace{0.5cm}+\sum_{\substack{(t_1,\dots,t_d)\\\sum_{i=1}^d t_i=p+1\\\max\{t_i\}<p}}\frac{(p-1)!}{\prod_{i=1}^d t_i!} \left(\left(\sum_{i=1}^d\lambda_it_i\right)^2-\left(\sum_{i=1}^d\lambda^2_it_i\right)\right)\prod_{i=1}^dX_i^{t_i}.
\end{align*}
\end{proof}

\subsection{Proof of Theorem \ref{main theorem long p}}
We can now prove the main result of this section and its corollaries.

\begin{proof}[Proof of Theorem \ref{main theorem long p}]
Let $A$ be a sequence of $p$ nonzero elements in $\mathbb{F}_p$ not being of the forms given in $(i)$ or $(ii)$.
Thanks to Proposition \ref{degeneratecase l=p}, we can assume that $A$ is regular.
For any given sequence $B$ of $p$ elements in $\mathbb{F}_p$ such that $\mathcal{S}_A\subset\mathcal{S}_B$,
let $\lambda_1,\dots,\lambda_d$ be the ratios associated with $(A,B)$ and assume $d \ge 2$.
On the one hand, (\ref{LB}) implies that $\sum_{i=1}^d(|\Sigma_i|-1) \ge p$. 
On the other hand, it follows from Theorem \ref{prop1} that $\sum_{i=1}^d(|\Sigma_i|-1) \le p$,
which yields
$$\sum_{i=1}^d(|\Sigma_i|-1)= p \quad \text{and} \quad \sum_{i=1}^d\lambda_i(|\Sigma_i|-1)=0 \text{ in } \mathbb{F}_p.$$
Suppose that $d=2$. Since $|\Sigma_1|+|\Sigma_2|\ge p+2$, we obtain $|\Sigma_1 \cap (-\Sigma_2)| \ge 2$, 
which would contradict Lemma \ref{x-x}.
From now on, we thus assume $d\ge 3$. 
Since $A$ is regular, Lemmas \ref{x-x} and \ref{thecut} give $|\Sigma_i|<p-1$ for all $i \in [1,d]$. 
It follows that $|\Sigma_i|=|S_i|+1$ for all $i \in [1,d]$, and Lemma \ref{Vosper} implies that there exists $r_i\in \mathbb{F}^*_p$ such that all elements of $S_i$ are equal to $r_i$ or $-r_i$.

For any two distinct elements $i_0,j_0 \in [1,d]$ and any $r_{i_0} \in S_{i_0}$, let us consider the sets $\Sigma'_i$ defined by
$$\Sigma'_{i_0}=\Sigma(S_{i_0}\smallsetminus(r_{i_0})),\ \Sigma'_{j_0}=\Sigma(S_{j_0}\cup(r_{i_0}))=\Sigma_{j_0}+\{0,r_{i_0}\}$$
and $\Sigma'_i=\Sigma_i$ otherwise. Let also $Q_{i_0,j_0}$ be as in Proposition \ref{pushpull}.

Since $\Sigma_{i_0}$ is an arithmetic progression with difference $r_{i_0}$, one has $|\Sigma'_{i_0}|=|\Sigma_{i_0}|-1$.  
In addition, since $A$ is regular, Lemmas \ref{x-x} and \ref{thecut} imply that $r_{i_0} \neq \pm r_{j_0}$. It follows from Lemma \ref{Vosper} that
$|\Sigma'_{j_0}|\ge|\Sigma_{j_0}|+2$.

We now define $(t_i)_{i \in [1,d]}$ by  $t_{i_0}=|\Sigma_{i_0}|-2$, $t_{j_0}=|\Sigma_{j_0}|+1$ and $t_i=|\Sigma_i|-1$ otherwise.
In particular, $t_i \le |\Sigma'_i|-1 < p$ for all $i \in [1,d]$ and $\sum^d_{i=1}t_i=p+1$.
Since $\sum_{i=1}^d\lambda_i(|\Sigma_i|-1)=0$ in $\mathbb{F}_p$, Proposition \ref{pushpull} implies that, up to a nonzero multiplicative constant, the coefficient of $\prod_{i=1}^d X_i^{t_i}$ in $Q_{i_0,j_0}$ is

\begin{align*}
\left(\sum_{i=1}^d\lambda_it_i\right)^2-\left(\sum_{i=1}^d\lambda^2_it_i\right) = & (2\lambda_{j_0}-\lambda_{i_0})^2-\left(2\lambda_{j_0}^2-\lambda_{i_0}^2+\sum_{i=1}^d\lambda_i^2(|\Sigma_i|-1)\right)\\
= & (2\lambda_{j_0}^2-4\lambda_{i_0}\lambda_{j_0}+2\lambda_{i_0}^2)-\sum_{i=1}^d\lambda_i^2(|\Sigma_i|-1)\\
= & 2(\lambda_{j_0}-\lambda_{i_0})^2-\sum_{i=1}^d\lambda_i^2(|\Sigma_i|-1).
\end{align*}
Note that $\sum_{i=1}^d\lambda_i^2(|\Sigma_i|-1)$ is independent of $i_0$ and $j_0$. 
In addition, the coefficient of $\prod_{i=1}^d X_i^{t_i}$ in $Q_{i_0,j_0}$ is zero if and only if 
$$(\lambda_{j_0}-\lambda_{i_0})^2=\frac{1}{2}\sum_{i=1}^d\lambda_i^2(|\Sigma_i|-1).$$
Since $d\ge 3$, there is at least one pair $(i_0,j_0)$ such that this equality does not hold. 
It follows from Theorem \ref{Combnull} that for this actual pair, $Q_{i_0,j_0}$ cannot vanish on $\prod_{i=1}^d \Sigma'_i$, which contradicts Proposition \ref{pushpull}.
\end{proof}

\begin{proof}[Proof of Corollary \ref{reconstruction l=p}]
Let $A,B$ be two sequences of $p$ nonzero elements in $\mathbb{F}_p$.
One direction of the implication is trivial. 
Thus, we shall prove that $A$ and $B$ are collinear whenever $\mathcal{S}_A=\mathcal{S}_B$.
By Theorem \ref{main theorem long p}, there are three cases to consider.
If $\dim(A)=p-1$, then $\mathcal{S}_A \subset \mathcal{S}_B$ already implies the required result.
If $A$ is constant, it is also easily seen that $\mathcal{S}_A = \mathcal{S}_B$ holds if and only if $B$ is constant.
Otherwise, $A$ and $B$ both are of the form given in Theorem \ref{main theorem long p} $(ii)$. 
In particular, there exist $t \in [1,p-3]$ and $r \in \mathbb{F}^*_p$ such that, by relabelling if necessary, one has
$$A=(\underbrace{r,\dots,r}_{t},\underbrace{-r,\dots,-r}_{p-2-t},-(t+1)r,-(t+1)r).$$
Since $\mathcal{S}_A \subset \mathcal{S}_B$, we deduce that
$$B=(\underbrace{\lambda r,\dots,\lambda r}_{t},\underbrace{-\lambda r,\dots,- \lambda r}_{p-2-t},-(t+1)\lambda r + d,-(t+1)\lambda r -d),$$
for some $\lambda \in \mathbb{F}^*_p$ and $d \in \mathbb{F}_p$. 
Thus, $B$ is of the form given in Theorem \ref{main theorem long p} $(ii)$ if and only if $d=0$, which completes the proof.
\end{proof}

\begin{proof}[Proof of Corollary \ref{long p minimales}]
Let $A$ be a nonconstant sequence of $p$ nonzero elements in $\mathbb{F}_p$. 
By Theorem \ref{main theorem long p}, there are two cases to consider. 
If $\dim(A)=p-1$, the required result directly follows from Proposition \ref{mimimini}. 
Otherwise, there exist $t \in [1,p-3]$ and $r \in \mathbb{F}^*_p$ such that $A$ consists of $t$ copies of $r$, $p-2-t$ copies of $-r$ and two copies of $-(t+1)r$.
Denoting the minimum and maximum of $t$ and $p-2-t$ by $m$ and $M$ respectively, it is easily seen that the number of minimal elements in $\mathcal{S_A}$ is
$$mM+{M \choose m}.$$
Now, if $t \in [2,p-4]$, then $p \ge 7$ and $\mathcal{S}_A$ contains at least $2(p-4) \ge p-1$ minimal elements.
If $t \in \{1,p-3\}$, then $p \ge 5$ and $\mathcal{S}_A$ contains exactly $2(p-3) \ge p-1$ minimal elements, which completes the proof.
\end{proof}

\section{Sequences of length $p-1$}
\label{section : long p-1}

Let $p$ be prime and let $A$ be a sequence of $p-1$ nonzero elements in $\mathbb{F}_p$.
In this section, we start by showing that the dimension and structure of $A$ can easily be deduced from Theorem \ref{main theorem long p} whenever $\sigma(A) \neq 0$.
Then, we concentrate on the case where $\sigma(A)=0$ and prove Theorem \ref{main theorem long p-1} as follows. 
On the one hand, we determine all exceptional zero-sum sequences. 
On the other hand, we further refine our general approach to handle the case of regular zero-sum sequences.

\subsection{The case of nonzero-sum sequences}
Given a sequence $A=(a_1,\dots,a_\ell)$ of $\ell \ge 1$ nonzero elements in $\mathbb{F}_p$ such that $\sigma(A) \neq 0$, we consider the sequence $A'=(a_1,\dots,a_\ell,-\sigma(A))$.
The aim of the following lemma is to show that the value of $\dim(A)$ can easily be derived from $\dim(A')$.

\begin{lem} 
\label{dim of non zero-sum sequences}
Let $p$ be a prime and let $A$ be a sequence of $\ell \ge 1$ nonzero elements in $\mathbb{F}_p$ such that $\sigma(A) \neq 0$. 
Then $A'$ is a zero-sum sequence and
$$\dim(A')=\dim(A)+1.$$
\end{lem}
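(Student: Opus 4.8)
The first assertion is immediate and requires no work: since $\sigma(A')=\sigma(A)-\sigma(A)=0$, the sequence $A'$ is zero-sum, and its extra entry $-\sigma(A)$ is nonzero because $\sigma(A)\neq 0$, so $A'$ genuinely is a zero-sum sequence of $\ell+1$ nonzero elements in $\mathbb{F}_p$. The real content is the dimension formula. The plan is to describe $\mathcal{S}_{A'}\subseteq\{0,1\}^{\ell+1}$ explicitly in terms of $\mathcal{S}_A$ and then read off $\dim(A')$ by a projection (rank--nullity) argument, rather than constructing bases by hand.

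First I would write each element of $\mathcal{S}_{A'}$ as $(x,\varepsilon)$ with $x\in\{0,1\}^\ell$ and $\varepsilon\in\{0,1\}$, and split according to the value of $\varepsilon$. If $\varepsilon=0$, the defining equation for $A'$ reduces to $A\cdot x=0$, that is $x\in\mathcal{S}_A$. If $\varepsilon=1$, it reads $A\cdot x=\sigma(A)$, which is equivalent to $A\cdot(\mathbf{1}-x)=0$, that is $\mathbf{1}-x\in\mathcal{S}_A$, where $\mathbf{1}=(1,\dots,1)$. This is exactly the complement trick made available by $A'$ being zero-sum. Hence
$$\mathcal{S}_{A'}=\{(x,0):x\in\mathcal{S}_A\}\cup\{(\mathbf{1}-y,1):y\in\mathcal{S}_A\}.$$

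Next, set $W=\langle\mathcal{S}_{A'}\rangle$ and $V=\langle\mathcal{S}_A\rangle$, and let $\pi:\mathbb{F}_p^{\ell+1}\to\mathbb{F}_p$ be the projection onto the last coordinate. Taking $y=\mathbf{0}\in\mathcal{S}_A$ shows $(\mathbf{1},1)\in\mathcal{S}_{A'}$, whence $\pi(W)=\mathbb{F}_p$. The generators with $\varepsilon=0$ already span $V\times\{0\}$, so the whole computation reduces to showing that $\ker(\pi|_W)=W\cap(\mathbb{F}_p^\ell\times\{0\})$ equals exactly $V\times\{0\}$; rank--nullity then gives $\dim W=\dim(V\times\{0\})+\dim\pi(W)=\dim(A)+1$, as required.

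The one step requiring care, and the crux of the argument, is the inclusion $W\cap(\mathbb{F}_p^\ell\times\{0\})\subseteq V\times\{0\}$. Here I would take any $w$ in this intersection, expand it as $w=\sum_x\alpha_x(x,0)+\sum_y\beta_y(\mathbf{1}-y,1)$ with $x,y$ ranging over $\mathcal{S}_A$, and exploit that the last coordinate of $w$ vanishes to obtain $\sum_y\beta_y=0$. This is precisely what annihilates the all-ones contributions: the first $\ell$ coordinates of $w$ equal $\sum_x\alpha_x x+(\sum_y\beta_y)\mathbf{1}-\sum_y\beta_y y=\sum_x\alpha_x x-\sum_y\beta_y y$, which lies in $V$ since both sums are combinations of elements of $\mathcal{S}_A$. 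The reverse inclusion $V\times\{0\}\subseteq W$ being clear from the $\varepsilon=0$ generators, the two inclusions together pin down $\ker(\pi|_W)$ and finish the proof.
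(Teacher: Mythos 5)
Your proof is correct and follows essentially the same route as the paper: both rest on the observation that, since $\sigma(A')=0$, the solutions with last coordinate $1$ are exactly the complements of those with last coordinate $0$ (which form a copy of $\mathcal{S}_A$), so that the last coordinate contributes exactly one extra dimension. Your rank--nullity bookkeeping via the projection $\pi$ is just a cosmetic repackaging of the paper's argument, which adjoins the all-ones vector $(1,\dots,1)\notin\langle X\rangle$ to the span of $X=\mathcal{S}_A\times\{0\}$.
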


\begin{proof}
Let us set $X=\{(x_1,\dots,x_{\ell+1}) \in \mathcal{S}_{A'} : x_{\ell+1} = 0 \}$. 
We clearly have $\sigma(A')=0$, which implies that $\mathcal{S}_{A'}$ is closed under complement. 
Therefore,
$$\langle \mathcal{S}_{A'} \rangle = \langle X \cup \{(1,\dots,1)\} \rangle.$$
In addition, it follows from the very definition of $X$ that $(1,\dots,1) \in \mathcal{S}_{A'} \setminus \langle X \rangle$ and $\dim(X)=\dim(A)$. Thus,
$$\dim(A')=\dim(\langle \mathcal{S}_{A'} \rangle)=\dim(X) +1 = \dim(A)+1,$$
which completes the proof.
\end{proof}

In particular, Lemma \ref{dim of non zero-sum sequences} implies that for every sequence $A$ of $p-1$ nonzero elements in $\mathbb{F}_p$ such that $\sigma(A) \neq 0$, either $\dim(A)=p-2$, or $\dim(A)=0$ and $A$ is constant, or $\dim(A)=p-3$ and $A$ can be obtained by deleting any element in a sequence of type $(ii)$ in Theorem \ref{main theorem long p}.

\smallskip
The reconstruction problems on $A$ and $A'$ are also closely related to each other.

\begin{lem} 
\label{reconstruction of non zero-sum sequences}
Let $p$ be a prime and let $A,B$ be two sequences of $\ell \ge 1$ nonzero elements in $\mathbb{F}_p$ such that $\sigma(A) \neq 0$.
Then $\mathcal{S}_{A'}=\mathcal{S}_{B'}$ if and only if $\mathcal{S}_{A}=\mathcal{S}_{B}$. 
\end{lem}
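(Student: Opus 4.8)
The plan is to relate $\mathcal{S}_{A'}$ to $\mathcal{S}_{A}$ (and likewise $\mathcal{S}_{B'}$ to $\mathcal{S}_{B}$) through the slice at the last coordinate, exploiting the closure under complement already observed in Lemma \ref{dim of non zero-sum sequences}.

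First I would record the basic slicing identity. For any sequence $C=(c_1,\dots,c_\ell)$ of nonzero elements in $\mathbb{F}_p$ and its extension $C'=(c_1,\dots,c_\ell,-\sigma(C))$, the relation $C'\cdot(x_1,\dots,x_\ell,0)=C\cdot(x_1,\dots,x_\ell)$ shows that $(x_1,\dots,x_\ell,0)\in\mathcal{S}_{C'}$ if and only if $(x_1,\dots,x_\ell)\in\mathcal{S}_{C}$. In other words, intersecting $\mathcal{S}_{C'}$ with the hyperplane $\{x_{\ell+1}=0\}$ returns a faithful copy of $\mathcal{S}_{C}$. Applying this to both $A$ and $B$, the implication $\mathcal{S}_{A'}=\mathcal{S}_{B'}\Rightarrow\mathcal{S}_{A}=\mathcal{S}_{B}$ follows immediately, since two equal sets have equal intersections with $\{x_{\ell+1}=0\}$. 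This direction needs no hypothesis beyond the definitions.

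For the converse, assume $\mathcal{S}_{A}=\mathcal{S}_{B}$. I would first check that $B'$ is genuinely a zero-sum extension with nonzero last entry. Since the all-ones vector lies in $\mathcal{S}_C$ exactly when $\sigma(C)=0$, the hypothesis $\sigma(A)\neq0$ gives $(1,\dots,1)\notin\mathcal{S}_A=\mathcal{S}_B$, whence $\sigma(B)\neq0$ and $-\sigma(B)\neq0$. Thus both $A'$ and $B'$ are zero-sum sequences, and, as noted in Lemma \ref{dim of non zero-sum sequences}, $\mathcal{S}_{A'}$ and $\mathcal{S}_{B'}$ are closed under complement. I would then argue that, for either sequence, $\mathcal{S}_{C'}$ is completely determined by its slice $\mathcal{S}_C$: the elements of $\mathcal{S}_{C'}$ with last coordinate $0$ form the copy of $\mathcal{S}_C$ above, while closure under complement identifies the elements with last coordinate $1$ as precisely the complements of the former. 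Concretely, $\mathcal{S}_{C'}=\{(x,0):x\in\mathcal{S}_C\}\cup\{\overline{(x,0)}:x\in\mathcal{S}_C\}$. Since $\mathcal{S}_A=\mathcal{S}_B$, these descriptions coincide for $A'$ and $B'$, giving $\mathcal{S}_{A'}=\mathcal{S}_{B'}$.

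The argument is essentially bookkeeping; the only point requiring care is the well-definedness of the extension $B'$, namely deducing $\sigma(B)\neq0$ from $\mathcal{S}_A=\mathcal{S}_B$, since it is this fact that makes the complement-closure of $\mathcal{S}_{B'}$ available. Once that is secured, the decomposition of each $\mathcal{S}_{C'}$ into its last-coordinate-$0$ and last-coordinate-$1$ parts reduces the equivalence to a direct comparison of the two slices.
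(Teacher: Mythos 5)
Your proof is correct and is essentially the paper's own argument, spelled out in detail: since $\sigma(A')=\sigma(B')=0$, both $\mathcal{S}_{A'}$ and $\mathcal{S}_{B'}$ are closed under complement, hence each is determined by its slice at $x_{\ell+1}=0$, which is a faithful copy of $\mathcal{S}_A$, resp.\ $\mathcal{S}_B$. Note only that your intermediate deduction $\sigma(B)\neq 0$ is not actually needed for the complement-closure of $\mathcal{S}_{B'}$, because $\sigma(B')=\sigma(B)-\sigma(B)=0$ holds by construction even when $\sigma(B)=0$; that deduction is, however, exactly what justifies the paper's subsequent application of Corollary \ref{reconstruction l=p} to $B'$, so it is a worthwhile observation to keep.
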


\begin{proof} 
Since $\sigma(A')=0$, the desired result is a straightforward consequence of the fact that $\mathcal{S}_{A'}$ is closed under complement.
\end{proof}

For instance, let $A,B$ be two sequences of $p-1$ nonzero elements in $\mathbb{F}_p$ such that $\sigma(A) \neq 0$ and $\mathcal{S}_A=\mathcal{S}_B$.
Specifying $\ell=p-1$ in Lemma \ref{reconstruction of non zero-sum sequences}, we obtain that $\mathcal{S}_{A'}=\mathcal{S}_{B'}$. 
Then, it easily follows from Corollary \ref{reconstruction l=p} that $A'$ and $B'$ are collinear, and so are $A$ and $B$.
From now on, we thus consider zero-sum sequences only.

\subsection{The case of exceptional zero-sum sequences}
We now state and prove the following lemma, which fully characterizes all exceptional zero-sum sequences in the case $\ell=p-1$.

\begin{prop}\label{degeneratecase l=p-1}
Let $p$ be a prime and let $A=(a_1,\dots,a_{p-1})$ be an exceptional zero-sum sequence. 
Then one of the following statements holds.
\begin{itemize}
\item[$(i)$] $\dim(A)=1$ and there exist $\sigma\in\mathfrak{S}_{p-1}$, $r \in \mathbb{F}^*_p$ such that
$$(a_{\sigma(1)},\dots,a_{\sigma(p-1)})=(r,\dots,r,2r).$$
\item[$(ii)$] $\dim(A)=p-4$ and there exist $\sigma\in\mathfrak{S}_{p-1}$, $r \in \mathbb{F}^*_p$ such that
$$(a_{\sigma(1)},\dots,a_{\sigma(p-1)})=(\underbrace{r,\dots,r}_{p-5},-r,2r,2r,2r).$$
\item[$(iii)$] $\dim(A)=p-3$ and there exist $t\in[0,p-6]$, $\sigma\in\mathfrak{S}_{p-1}$, $r \in \mathbb{F}^*_p$ such that
$$(a_{\sigma(1)},\dots,a_{\sigma(p-1)})=(\underbrace{r,\dots,r}_{t},\underbrace{-r,\dots,-r}_{p-4-t},2r,-(t+3)r,-(t+3)r).$$
\item[$(iv)$] $\dim(A)=p-3$ and there exist $t\in[1,p-4]$, $\sigma\in\mathfrak{S}_{p-1}$, $r \in \mathbb{F}^*_p$ such that
$$(a_{\sigma(1)},\dots,a_{\sigma(p-1)})=(\underbrace{r,\dots,r}_{t},\underbrace{-r,\dots,-r}_{p-3-t},-(t+1)r,-(t+2)r).$$
\end{itemize}
\end{prop}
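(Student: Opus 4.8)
The plan is to exploit the exceptional hypothesis exactly as in the proof of Proposition~\ref{degeneratecase l=p}, but with the crucial difference that deleting two coordinates from a sequence of length $p-1$ leaves a subsequence $A'$ of length $p-3$, whose set of subsums is governed not only by Cauchy--Davenport but by the finer Vosper type statements of Lemma~\ref{Vosper} and Lemma~\ref{theinv}. First I would fix two distinct indices $i,j$ witnessing that $A$ is exceptional, so that $|\{i,j\}\cap x|\in\{0,2\}$ for all $x\in\mathcal{S}_A$, and set $A'$ to be the sequence obtained from $A$ by deleting $a_i$ and $a_j$. The exceptional condition forces $-a_i\notin\Sigma(A')$, $-a_j\notin\Sigma(A')$, and (since $(a_i,a_j)\subset x$ or disjoint) also $-(a_i+a_j)\notin\Sigma(A')$. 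Because $\sigma(A)=0$ we have $a_i+a_j=-\sigma(A')$, so $\sigma(A')\notin\Sigma(A')$; combined with $0\in\Sigma(A')$ this gives at least $\ell'+1$ and typically $\ell'+2$ forbidden or forced structural constraints. The Cauchy--Davenport bound gives $|\Sigma(A')|\ge p-2$, and the three excluded residues $-a_i,-a_j,-(a_i+a_j)$ force $|\Sigma(A')|\le p-1$.

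\textbf{Splitting into the two regimes.}

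The heart of the argument is the case split on $|\Sigma(A')|\in\{p-2,p-1\}$. If $|\Sigma(A')|=p-1$, then exactly one residue besides the forbidden ones is missing, so among $-a_i,-a_j,-(a_i+a_j)$ two must coincide; analyzing which coincidence occurs pins down $a_i$ and $a_j$ relative to each other and, via Lemma~\ref{theinv} applied to $A'$, shows $A'$ consists of copies of $\pm r$ together with at most one element in $\{\pm 2r\}$. If instead $|\Sigma(A')|=p-2$, the three excluded residues are pairwise distinct, $A'$ is maximally Vosper-rigid, and Lemma~\ref{Vosper} forces all elements of $A'$ into $\{\pm r\}$. In each subcase I would then solve the two linear constraints $a_i+a_j=-\sigma(A')$ and the membership conditions $a_i,a_j\notin -\Sigma(A')$ to determine $(a_i,a_j)$ explicitly in terms of $r$ and the number $t$ of copies of $r$ in $A'$; this is exactly what produces the four normal forms $(i)$--$(iv)$, where $(i)$ and $(ii)$ arise from the $|\Sigma(A')|=p-1$ regime (with $A'$ almost constant, respectively carrying one $2r$) and $(iii)$,$(iv)$ from the balanced $\pm r$ regime with the deleted pair absorbing the sum.

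\textbf{The main obstacle and the dimension computation.}

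The main obstacle is bookkeeping: one must verify that each candidate $(a_i,a_j)$ really does satisfy the exceptional condition for \emph{all} $x\in\mathcal{S}_A$ (not merely the subsum constraints I extracted), and one must rule out degenerate overlaps between the cases and boundary values of $t$ that would either violate $\sigma(A)=0$, force a zero coordinate, or collapse two forms into one. I expect the delicate points to be the small-prime coincidences (the condition $\ell+2<p$ in Lemma~\ref{theinv} excludes tiny $p$, so the extreme values $t=0$ and $t=p-4$ must be checked separately) and the precise interplay when $2r=-r$ would occur, i.e. $p=3$, which must be excluded. Once the normal forms are fixed, the dimension claim is settled by exhibiting an explicit basis of $\langle\mathcal{S}_A\rangle$ analogous to the table in the proof of Proposition~\ref{degeneratecase l=p}: in the balanced cases one pairs each $+r$ with a $-r$ and each copy of the repeated tail element with the appropriate block, counting the independent zero-sum indicator vectors to read off $\dim(A)=p-3$, while the almost-constant case $(i)$ visibly has $\dim(A)=1$ and case $(ii)$ yields $\dim(A)=p-4$ after removing the extra dependency caused by the second special element. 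Finally, I would note that case $(v)$ of Theorem~\ref{main theorem long p-1} does \emph{not} appear here, since the sequence $(-1,1,-2,2,-3,3)$ is regular rather than exceptional, confirming that this proposition captures precisely the exceptional zero-sum sequences.
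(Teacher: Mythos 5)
Your overall strategy is the paper's: delete the two witness coordinates $i,j$, bound $|\Sigma(A')|$ below by Cauchy--Davenport, split on $|\Sigma(A')|\in\{p-2,p-1\}$, and apply Lemma \ref{Vosper} (resp.\ Lemma \ref{theinv}) before using $\sigma(A)=0$ to pin down $a_i,a_j$. But there is a genuine error at the start: the claim that $-(a_i+a_j)\notin\Sigma(A')$ is false. Exceptionality forbids only those zero-sum sets meeting $\{i,j\}$ in exactly one element; sets containing \emph{both} $i$ and $j$ are allowed, and must exist: since $\sigma(A)=0$, the full support $[1,p-1]$ lies in $\mathcal{S}_A$ and contains both $i$ and $j$. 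Equivalently, $-(a_i+a_j)=\sigma(A')$, and $\sigma(A')\in\Sigma(A')$ trivially (take the full subsequence of $A'$), so the conclusion ``$\sigma(A')\notin\Sigma(A')$'' that you derive is absurd on its face. The error propagates: in your $|\Sigma(A')|=p-2$ regime you speak of three pairwise distinct excluded residues, which would give $|\Sigma(A')|\le p-3$ and contradict your own Cauchy--Davenport bound; the correct statement is that the two missing residues are $-a_i$ and $-a_j$ with $a_i\neq a_j$ (and this inequality is \emph{forced} by $\sigma(A)=0$, not assumed). Likewise, in the $|\Sigma(A')|=p-1$ regime the correct deduction is simply $-a_i=-a_j$, both being the unique missing element; there is no coincidence analysis among three quantities to perform.

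A second defect: your attribution of the normal forms to the two regimes is wrong. Form $(iii)$ contains a $2r$ entry, so its $A'$ is not supported on $\{\pm r\}$; it arises from the $|\Sigma(A')|=p-1$ (Lemma \ref{theinv}) regime with $t\in[0,p-6]$, not from the balanced $\pm r$ regime. Conversely, form $(i)$ arises from \emph{both} regimes (from $|\Sigma(A')|=p-2$ when $t\in\{0,p-3\}$, and from $|\Sigma(A')|=p-1$ when $t=p-4$), and the Vosper regime produces only $(i)$ and $(iv)$. These boundary identifications are exactly what you deferred as ``bookkeeping,'' but they are the substance of the proposition: as written, your case analysis would not assemble into the four stated forms. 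The skeleton is repairable --- drop the false third exclusion, and the constraints you do state ($a_i+a_j=-\sigma(A')$ together with $-a_i,-a_j\notin\Sigma(A')$) suffice, exactly as in the paper, followed by explicit bases for the dimension counts --- but the argument as submitted is not correct.
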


\begin{proof}
Since $A$ is exceptional, there exist two distinct elements $i,j \in [1,p-1]$ such that for all $x \in \mathcal{S}_A$, one has $|\{i,j\}\cap x|\in\{0,2\}$. 
Now, let $A'$ be the sequence obtained from $A$ by deleting $a_i$ and $a_j$.
Then, Cauchy-Davenport Theorem gives
$$\left|\Sigma(A')\right| \ge \min\{p,(p-3)+1\} = p-2.$$
By assumption, one has $-a_i \notin \Sigma(A')$ and $-a_j \notin \Sigma(A')$.
We now consider two cases.

\smallskip
$\bullet$ 
$\left|\Sigma(A')\right|=p-2$.
Then, by Lemma \ref{Vosper}, there exist $t \in [0,p-3]$ and $r\in \mathbb{F}^*_p$ such that $A'$ consists of $t$ copies of $r$ and $p-3-t$ copies of $-r$.
Therefore, we obtain that $\{a_i,a_j\} \subset \{-(t+1)r,-(t+2)r\}$. 
If $a_i = a_j$, then $a_i+a_j \in \{-2(t+1)r,-2(t+2)r\}$. 
Since $\sigma(A')=(2t+3)r$, we would have $\sigma(A) \neq 0$, which is a contradiction.
Thus, one has $a_i \neq a_j$ and, by relabelling if necessary, it follows that either $t \in \{0,p-3\}$ and $A$ is of the form given in $(i)$, 
or $t \in [1,p-4]$ and $A$ is of the form given in $(iv)$. 

\smallskip
$\bullet$ 
$\left|\Sigma(A')\right|=p-1$.
Then, $a_i=a_j$ and by Lemma \ref{theinv}, there exist $t \in [0,p-4]$ and $r \in \mathbb{F}^*_p$ such that $A'$ consists of $t$ copies of $r$, $p-4-t$ copies of $-r$ and one copy of $2r$.
If $t=p-4$, then $\Sigma(A')= \mathbb{F}_p \smallsetminus\{-r\}$ and $a_i=a_j=r$, so that $A$ is of the form given in $(i)$. 
If $t=p-5$, then $\Sigma(A')= \mathbb{F}_p \smallsetminus\{-2r\}$, $a_i=a_j=2r$ and $A$ is of the form given in $(ii)$.
Otherwise, one has $t \in [0,p-6]$, $\Sigma(A')= \mathbb{F}_p \smallsetminus\{(t+3)r\}$, $a_i=a_j=-(t+3)r$, and $A$ is of the form given in $(iii)$. 

\medskip
The following tables give a basis of $\langle \mathcal{S}_A \rangle$ for cases $(ii)$ to $(iv)$.
\begin{itemize}
\item[$(ii)$] $\dim(A)=p-4$.
\[\begin{array}{r|ccc|c|ccc|}
\multicolumn{1}{c}{} & \multicolumn{3}{c}{\overbrace{\rule{1.7cm}{0pt}}^{p-5}}\\
\cline{2-8}
&r & \dots & r & -r & 2r & 2r & 2r\\
\cline{2-8}
\ldelim\{{3}{10mm}[$p-5$] &1 &       & (0)   & 1 &    &   &   \\
&  &\ddots &   & \vdots  & & (0)  &  \\
& (0)  &   &  1 & 1  &  &  &  \\
\cline{2-8}
&1 & \dots & 1 & 1 & 1 & 1 & 1\\
\cline{2-8}
\end{array}\]

\item[$(iii)$] $\dim(A)=p-3$ and $t\in[1,p-6]$.
\[\begin{array}{r|ccc|ccc|c|cc|}
\multicolumn{1}{c}{} & \multicolumn{3}{c}{\overbrace{\rule{1.5cm}{0pt}}^{t}} & \multicolumn{3}{c}{\overbrace{\rule{1.8cm}{0pt}}^{p-4-t}}\\
\cline{2-10}
&r & \dots & r & -r & \dots &-r & 2r & -(t+3)r & -(t+3)r\\
\cline{2-10}
\ldelim\{{3}{16mm}[$p-4-t$] & 1 &       &    & 1 &    & (0)  &  &  & \\
 & \vdots  & (0) &   &  &   \ddots &  & &   &  \\
\ldelim\{{3}{4mm}[$t$] &  1 &   &   &   &  & 1 &(0) &(0)  & (0) \\
 & & \ddots &  &  &(0)  & \vdots &  & &  \\
&(0) &      & 1  &  &  &  1  &  & & \\
\cline{2-10}
&  &  (0)    &   & 1 &(0)  & 1  & 1 & 0 & 0 \\
\cline{2-10}
& 1 & \dots & 1 & 1 &  \dots & 1 & 1 & 1 & 1\\
\cline{2-10}
\end{array}\]
And whenever $t=0$,
\[\begin{array}{r|cccc|c|cc|}
\multicolumn{1}{c}{} & \multicolumn{4}{c}{\overbrace{\rule{2.8cm}{0pt}}^{p-4}}\\
  \cline{2-8}
& -r & -r & \dots &-r & 2r & -3r & -3r\\
\cline{2-8}
\ldelim\{{3}{10mm}[$p-5$] &  1 &  1 &   & (0)  & 1 &  & \\
& \vdots &  &   \ddots &  & \vdots & (0)  & (0) \\
& 1  & (0) &  & 1 & 1 &  &  \\
 \cline{2-8}
& 0   & 1 &(0) &1  & 1 & 0 & 0\\
 \cline{2-8}
&  1 & 1 & \dots & 1 & 1 & 1 & 1\\
 \cline{2-8}
\end{array}\]

\item[$(iv)$] $\dim(A)=p-3$ and $t\in[1,p-4]$.
\[\begin{array}{r|ccc|ccc|cc|}
\multicolumn{1}{c}{} & \multicolumn{3}{c}{\overbrace{\rule{1.5cm}{0pt}}^{t}} & \multicolumn{3}{c}{\overbrace{\rule{1.8cm}{0pt}}^{p-3-t}}\\
\cline{2-9}
& r & \dots & r & -r & \dots &-r & -(t+1)r & -(t+2)r\\
\cline{2-9}
\ldelim\{{3}{16mm}[$p-3-t$] & 1 &       &   & 1 &    &  (0)  &  & \\
& \vdots  &(0) &   &  &   \ddots  & &   &  \\
\ldelim\{{3}{4mm}[$t$] & 1 &   &   &   &  & 1 &(0)  & (0) \\
 & & \ddots &  &  &(0)  & \vdots &   &  \\
&(0) &      & 1  &  &  &  1  &  &  \\
\cline{2-9}
& 1 & \dots & 1 & 1 &  \dots & 1 & 1 & 1\\
\cline{2-9}
\end{array}\]
\end{itemize}
\end{proof}

\subsection{Preliminary results}

In the proof of Theorem \ref{main theorem long p-1}, we will need the following two lemmas to deal with the cases where the number of distinct ratios is small.
The first one gives some nonvanishing properties of a particular quadratic polynomial.

\begin{lem}
\label{conic} 
Let $p \ge 5$ be a prime. Given $u,v \in \mathbb{F}_p$, let $f$ be the quadratic polynomial
$$f(x,y)=2x^2-6xy+6y^2+2u(3y-x)+v.$$
Then, for any three pairwise distinct elements $\alpha_1, \alpha_2,\alpha_3 \in \mathbb{F}_p$, 
there exists $\sigma \in \mathfrak{S}_3$ such that one of the following holds.
\begin{itemize}
\item[$(i)$] $f(\alpha_{\sigma(1)},\alpha_{\sigma(2)}) \, f(\alpha_{\sigma(1)},\alpha_{\sigma(3)})\neq 0$,
\item[$(ii)$] $f(\alpha_{\sigma(2)},\alpha_{\sigma(1)}) \, f(\alpha_{\sigma(3)},\alpha_{\sigma(1)})\neq 0$,
\item[$(iii)$] $f(\alpha_{\sigma(1)},\alpha_{\sigma(2)}) \, f(\alpha_{\sigma(2)},\alpha_{\sigma(1)})\neq 0$,
\item[$(iv)$] $f(\alpha_{\sigma(1)},\alpha_{\sigma(2)}) \, f(\alpha_{\sigma(2)},\alpha_{\sigma(3)}) \, f(\alpha_{\sigma(3)},\alpha_{\sigma(1)})\neq 0$.
\end{itemize}
\end{lem}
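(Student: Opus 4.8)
The plan is to recast the six quantities $f(\alpha_i,\alpha_j)$, $i\neq j$, as a directed graph and thereby reduce the statement to a short computation. I would introduce the digraph $G$ on the vertex set $\{1,2,3\}$ in which one draws the arc $i\to j$ precisely when $f(\alpha_i,\alpha_j)\neq 0$. Since $\sigma$ ranges over all of $\mathfrak{S}_3$, the four alternatives translate cleanly into properties of $G$: condition $(i)$ is realizable for some $\sigma$ if and only if some vertex of $G$ has out-degree $2$; condition $(ii)$, if and only if some vertex has in-degree $2$; condition $(iii)$, if and only if $G$ contains a pair of opposite arcs (a $2$-cycle); and condition $(iv)$, if and only if $G$ contains a directed $3$-cycle. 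I would then argue by contradiction, assuming that none of $(i)$--$(iv)$ holds for any $\sigma$, that is, that $G$ has maximal in- and out-degree at most $1$, no $2$-cycle and no $3$-cycle.

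The first step is purely combinatorial. A digraph whose in- and out-degrees are at most $1$ is a disjoint union of directed paths and cycles, and excluding $2$- and $3$-cycles leaves only (sub)paths; on three vertices this forces $G$ to be a subgraph of a single directed Hamiltonian path. Hence, after relabelling the $\alpha_i$ as $a,b,c$, the only arcs that may be present are $a\to b$ and $b\to c$, so that
$$f(a,c)=f(c,a)=f(b,a)=f(c,b)=0,$$
with $a,b,c$ pairwise distinct.

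The second step extracts three linear relations from these four vanishings, and this is where the hypothesis $p\ge 5$ (so that $2,3,6$ are invertible) enters. From the identity
$$f(x,y)-f(y,x)=-4(x-y)(x+y+2u),$$
the pair $f(a,c)=f(c,a)=0$ together with $a\neq c$ gives $a+c+2u=0$. Viewing $f(x,a)=2x^2-(6a+2u)x+(6a^2+6ua+v)$ as a quadratic in $x$ with the two distinct roots $b,c$, Vieta's formulas yield $b+c=3a+u$. Likewise, viewing $f(c,y)=6y^2-(6c-6u)y+(2c^2-2uc+v)$ as a quadratic in $y$ with the two distinct roots $a,b$ gives $a+b=c-u$. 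Subtracting these last two relations produces $c=2a+u$, and feeding this into $a+c+2u=0$ forces $3(a+u)=0$, hence $a=-u$ and then $c=2a+u=a$, contradicting $a\neq c$.

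I expect the main obstacle to be the bookkeeping of the first step: one must verify carefully that \emph{every} digraph compatible with the failure of $(i)$--$(iv)$ --- including the degenerate graphs with fewer than two arcs --- embeds into a single directed path, so that the canonical zero-pattern displayed above is always available. Once that pattern is secured the algebraic contradiction is immediate, the only subtlety being to record that the leading coefficients $2$ and $6$ of the column and row quadratics, as well as the factors $2$ and $3$ appearing in the final elimination, are nonzero precisely because $p\ge 5$.
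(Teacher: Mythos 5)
Your proof is correct, and it reaches the paper's key intermediate configuration by a different and arguably cleaner route. Both arguments boil down to the same canonical zero pattern: after relabelling, the four values $f(a,c)$, $f(c,a)$, $f(b,a)$, $f(c,b)$ all vanish for pairwise distinct $a,b,c$, and this is shown to be impossible when $p\ge 5$. The paper gets there in two stages: it first invokes the fact that, up to a scalar, the unique conic through the six points $(\alpha_i,\alpha_j)$, $i\ne j$, is $x^2+y^2+xy-(x+y)(\alpha_1+\alpha_2+\alpha_3)+(\alpha_1\alpha_2+\alpha_1\alpha_3+\alpha_2\alpha_3)$, so that $f$ (whose quadratic part $2x^2-6xy+6y^2$ is not proportional to $x^2+xy+y^2$) cannot vanish at all six points; it then runs a case analysis starting from one nonvanishing value. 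Your digraph reduction replaces both stages at once: the failure of $(i)$--$(iv)$ forces all in- and out-degrees to be at most $1$ with no $2$- or $3$-cycles, hence a subgraph of a directed Hamiltonian path, and this uniformly covers the degenerate case where all six values vanish (the empty digraph), so you never need the uniqueness-of-the-conic ``exercise'' at all. The algebraic finish also differs in form but not in substance: the paper extracts three factored differences of $f$-values and solves a $3\times 3$ linear system with determinant $6$, concluding $\alpha_1=\alpha_2=\alpha_3=-u$; you use the antisymmetric identity $f(x,y)-f(y,x)=-4(x-y)(x+y+2u)$ once and then apply Vieta's formulas to the column quadratic $f(x,a)$ (roots $b,c$) and the row quadratic $f(c,y)$ (roots $a,b$), eliminating to get $a=c=-u$. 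Both finishes use exactly the invertibility of $2$ and $3$, i.e.\ $p\ge 5$, in the same essential way, so neither is more general; what your version buys is self-containedness in the combinatorial step, at the price of having to verify the path-structure claim, which you do correctly.
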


\begin{proof}
It is an easy exercise to show that, up to a nonzero multiplicative constant, the only quadratic polynomial vanishing on the six points 
$(\alpha_1,\alpha_2)$, $(\alpha_2,\alpha_1)$, $(\alpha_1,\alpha_3)$, $(\alpha_3,\alpha_1)$, $(\alpha_2,\alpha_3)$ and $(\alpha_3,\alpha_2)$ is
$$x^2+y^2+xy-(x+y)(\alpha_1+\alpha_2+\alpha_3)+(\alpha_1\alpha_2+\alpha_1\alpha_3+\alpha_2\alpha_3).$$

Thus, there exist two distinct elements $i,j \in \{1,2,3\}$ such that $f(\alpha_i,\alpha_j) \neq 0$. 
Now, let $k$ be the remaining element of $\{1,2,3\}$ and assume that for all $\sigma \in \mathfrak{S}_3$, none of the properties $(i)$ to $(iv)$ holds. 
Then one has $f(\alpha_j,\alpha_i)=0$, $f(\alpha_i,\alpha_k)=0$ and $f(\alpha_k,\alpha_j)=0$. 
In addition, since $f(\alpha_i,\alpha_j) \, f(\alpha_j,\alpha_k) \, f(\alpha_k,\alpha_i)=0$, we have that either $f(\alpha_j,\alpha_k)=0$ or $f(\alpha_k,\alpha_i)=0$.
Therefore, there exists $\sigma \in \mathfrak{S}_3$ such that
$$f\left(\alpha_{\sigma(1)},\alpha_{\sigma(2)}\right)=f(\alpha_{\sigma(2)},\alpha_{\sigma(1)})=f(\alpha_{\sigma(1)},\alpha_{\sigma(3)})=f(\alpha_{\sigma(3)},\alpha_{\sigma(2)})=0.$$
Without loss of generality, we may assume that $\sigma$ is the identity, so that
\begin{align*}
f(\alpha_1,\alpha_2)-f(\alpha_2,\alpha_1)=0 & \iff -4(\alpha_1-\alpha_2)(\alpha_1+\alpha_2+2u)=0\\
f(\alpha_1,\alpha_2)-f(\alpha_1,\alpha_3)=0 & \iff 6(\alpha_2-\alpha_3)(-\alpha_1+\alpha_2+\alpha_3+u)=0\\
f(\alpha_1,\alpha_2)-f(\alpha_3,\alpha_2)=0 & \iff 2(\alpha_1-\alpha_3)(\alpha_1-3\alpha_2+\alpha_3-u)=0.
\end{align*}
Since $\alpha_1,\alpha_2,\alpha_3$ are pairwise distinct, they are solution to the linear system
$$\left\{\begin{array}{rcl}
\alpha_1+\alpha_2\phantom{+5\alpha_3} & = & -2u\\
-\alpha_1+\alpha_2+\alpha_3 & = & -u\\
\alpha_1-3\alpha_2+\alpha_3 & = & u.
\end{array}\right.$$
Since $p \ge 5$, the determinant of this system is $6 \neq 0$ in $\mathbb{F}_p$. 
Therefore, this system has a unique solution, which is easily seen to be $\alpha_1=\alpha_2=\alpha_3=-u$.
This contradicts the fact that $\alpha_1,\alpha_2,\alpha_3$ are pairwise distinct, and the proof is complete.
\end{proof}

Our second lemma gives the structure of a pair of arithmetic progressions covering almost all $\mathbb{F}_p$, but intersecting in only one element.

\begin{lem}
\label{P1P2} 
Let $p$ be a prime. 
Let also $P_1$ and $P_2$ be two arithmetic progressions in $\mathbb{F}_p$ with distinct differences, such that $\min\{|P_1|,|P_2|\}\ge 3$ and $P_1\cap P_2=\{0\}$.

\smallskip
$\bullet$ If $|P_1|+|P_2|=p$, then there exists $r \in \mathbb{F}^*_p$ such that $P_1$ and $P_2$ have one of the following forms.
\begin{enumerate}
\item $\{0,2,4\}.r$ and $\{5,6,\dots,p-1,0,1\}.r$,
\item $\{-\frac{p-1}{2},0,\frac{p-1}{2}\}.r$ and $\{-\frac{p-5}{2},-\frac{p-7}{2},\dots,\frac{p-5}{2},\frac{p-3}{2}\}.r$.
\end{enumerate}

\smallskip
$\bullet$ If $|P_1|+|P_2|=p-1$, then there exists $r \in \mathbb{F}^*_p$ such that $P_1$ and $P_2$ have one of the following forms.
\begin{enumerate}
\setcounter{enumi}{2}
\item $\{0,2,4\}.r$ and $\{5,6,\dots,p-1,0\}.r$,
\item $\{0,2,4\}.r$ and $\{6,7,\dots,p-1,0,1\}.r$,
\item $\{0,2,4,6\}.r$ and $\{7,8,\dots,p-1,0,1\}.r$,
\item $\{0,3,6\}.r$ and $\{7,8,\dots,0,1,2\}.r$,
\item $\{-\frac{p-1}{2},0,\frac{p-1}{2}\}.r$ and $\{-\frac{p-5}{2},-\frac{p-7}{2},\dots,\frac{p-7}{2},\frac{p-5}{2}\}.r$,
\item $\{-\frac{p-1}{2},0,\frac{p-1}{2}\}.r$ and $\{-\frac{p-7}{2},-\frac{p-9}{2},\dots,\frac{p-5}{2},\frac{p-3}{2}\}.r$,
\item $\{-\frac{p-3}{2},0,\frac{p-3}{2}\}.r$ and $\{-\frac{p-5}{2},-\frac{p-7}{2},\dots,\frac{p-7}{2},\frac{p-5}{2}\}.r$,
\item $p=11$ and
\begin{enumerate}
\item $\{-6,-3,0,3,6\}.r$ and $\{-2,-1,0,1,2\}.r$,
\item $\{-4,0,4,8\}.r$ and $\{-2,-1,0,1,2,3\}.r$,
\item $\{-8,-4,0,4,8\}.r$ and $\{-2,-1,0,1,2\}.r$,
\end{enumerate}
\item $p=13$ and $\{-8,-4,0,4,8\}.r$ and $\{-3,-2,-1,0,1,2,3\}.r$.
\end{enumerate}
\end{lem}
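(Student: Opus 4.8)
The plan is to turn the statement into a one‑dimensional packing problem and then pin down the common difference by a two‑sided estimate. Since $P_1\cap P_2=\{0\}$, both progressions contain $0$. Scaling $\mathbb{F}_p$ by the inverse of the difference of $P_2$ (this scaling is exactly the factor $r$ appearing in the statement), I may assume that $P_2$ is an \emph{arc}, i.e.\ a block of consecutive residues containing $0$, and that $P_1$ is an arithmetic progression of difference $q$ with $q\not\equiv 0,\pm1\pmod p$ (if $q\equiv\pm1$ then $P_1$ would be an arc with the same difference as $P_2$). Writing $C=\mathbb{F}_p\setminus P_2$, the set $C$ is a genuine integer interval avoiding $0$ (an arc missing $0$ cannot wrap around), and the hypothesis $P_1\cap P_2=\{0\}$ says precisely that $C=(P_1\setminus\{0\})\sqcup D$ with $D=\mathbb{F}_p\setminus(P_1\cup P_2)$. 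Counting yields $|P_1|+|P_2|=p+1-|D|$, so the two items of the lemma are exactly $|D|=1$ (sum $p$) and $|D|=2$ (sum $p-1$). Thus I must classify the progressions $P_1$ of difference $q$ whose nonzero terms fill an interval $C$ of length $m=|P_1|-1+|D|$ up to at most $|D|\le 2$ missing points.

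Next I would control $q$ through two complementary estimates. Writing $q^\ast=\min(\widehat q,\,p-\widehat q)$ for the least absolute residue, the fact that two progression‑adjacent terms of $P_1$ both lie in the interval $C$ forces $q^\ast\le m-1$. More importantly, since $C$ is an interval missing only $|D|$ of the terms of $P_1$, the set $P_1$ contains at least $|P_1|-2-|D|$ pairs of consecutive integers $\{v,v+1\}$; in a progression of difference $q$ such a pair corresponds to an index gap $\equiv\pm q^{-1}$, and a direct count shows there are exactly $\max\{0,|P_1|-(q^{-1})^\ast\}$ of them. This gives the clean dichotomy: \emph{either} $|P_1|\le |D|+2$, \emph{or} $(q^{-1})^\ast\le |D|+2$. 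Combined with the elementary identity $q^\ast\,(q^{-1})^\ast\ge p-1$ (immediate from $q\,q^{-1}\equiv1$ and $q\not\equiv\pm1$), the second alternative forces $q^\ast\ge (p-1)/(|D|+2)$, i.e.\ the progression has a \emph{large} step. The crucial consequence is finiteness: a progression with large step and at least $|D|+3\ge 5$ terms cannot fill an interval leaving only $\le 2$ holes unless $p$ itself is small, so the second alternative survives only for finitely many primes.

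For the bulk case $|P_1|\le |D|+2$, I would solve the packing condition directly. When $|D|=1$ the same consecutive‑pairs count shows $|P_1|=3$ is forced, and placing a three‑term progression $\{-q,0,q\}$ or $\{0,q,2q\}$ so that its two nonzero terms fill an interval of length $3$ minus one point yields exactly the two families $(1)$ and $(2)$. When $|D|=2$, the admissible lengths are $|P_1|\in\{3,4\}$; running through the position of $0$ relative to $P_1$ and the location of the two holes, and using the symmetries $j\mapsto -j$ and reflection of the interval, produces the generic families $(3)$–$(9)$ (with $|P_1|=3$) together with $(5)$ (with $|P_1|=4$, difference $2r$). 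Here each candidate difference is checked against the condition that the nonzero terms land \emph{monotonically} inside the interval.

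The main obstacle, and the delicate part, is completeness of the enumeration at the boundary. Two phenomena create extra, non‑generic solutions only for small primes: first, in the branch $|P_1|\ge |D|+3$, the bound $(q^{-1})^\ast\le |D|+2$ together with $q^\ast(q^{-1})^\ast\ge p-1$ pins $p$ to a short list; second, even when $|P_1|=|D|+2$, a single \emph{wrapped} term of the progression may coincidentally fall into the interval (for instance $-4\equiv 7\pmod{11}$ landing in $\{4,\dots,8\}$), an accident possible only for particular $p$. Carrying out these finite checks shows that all such coincidences occur precisely at $p=11$ and $p=13$, giving the sporadic configurations $(10)$ and $(11)$. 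Throughout I would keep careful track of the involution exchanging $P_1$ and $P_2$ (equivalently $q\leftrightarrow q^{-1}$), so that each genuine unordered pair is recorded exactly once in the normalized shape demanded by the statement; this bookkeeping, rather than any single deep step, is where most of the care is required.
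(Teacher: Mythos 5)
Your overall strategy (rescale so that one progression becomes an interval, pass to the complement, and classify via a packing count) is close in spirit to the paper's proof, but there is a genuine gap at the heart of your argument: the ``exact count'' of consecutive pairs, and hence the ``clean dichotomy'', is false. A pair $\{v,v+1\}\subset P_1$ corresponds to an index gap $j-i\equiv \pm q^{-1}\pmod p$, and with indices ranging over $[0,|P_1|-1]$ such a gap can be realized by an integer of absolute value $\widehat{q^{-1}}$ \emph{or} $p-\widehat{q^{-1}}$; the correct count is $\max\{0,|P_1|-\widehat{q^{-1}}\}+\max\{0,|P_1|-(p-\widehat{q^{-1}})\}$, which exceeds your $\max\{0,|P_1|-(q^{-1})^{\ast}\}$ as soon as $|P_1|+(q^{-1})^{\ast}>p$. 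This is not a technicality, because you normalized by \emph{label} (the labelled $P_2$ becomes the arc) rather than by \emph{size}, so nothing prevents $P_1$ from being the long progression. Concretely, for every prime $p\ge 11$ take
$$P_2=\{-1,0,1\},\qquad P_1=\Bigl\{2k \,:\, -\tfrac{p-3}{2}\le k\le \tfrac{p-5}{2}\Bigr\},$$
an arithmetic progression of difference $2$ with $p-3$ terms; then $P_1\cap P_2=\{0\}$ and $|P_1|+|P_2|=p$ (this is your family $(2)$ with the two labels exchanged and $r=-2$), yet $|P_1|=p-3>|D|+2$ and $(q^{-1})^{\ast}=\tfrac{p-1}{2}>|D|+2$, so \emph{both} alternatives of your dichotomy fail and your enumeration never reaches this configuration. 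The same oversight kills the finiteness claim for the second alternative: exchanging labels in family $(1)$ gives, for every prime, a progression with step $(p+1)/2$ (large step, in your branch $(q^{-1})^{\ast}\le |D|+2$) and $p-3$ terms whose nonzero elements fill an interval up to a single hole, so that branch does \emph{not} survive only for finitely many primes.

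All of this is repaired by the step the paper performs first and you omitted: since the hypothesis and the conclusion are symmetric in the unordered pair, assume without loss of generality $|P_1|\le|P_2|$, and only then rescale so that the \emph{longer} progression is the interval. Under that normalization $|P_1|\le(p-1)/2$ and $(q^{-1})^{\ast}\le(p-1)/2$, hence $|P_1|+(q^{-1})^{\ast}<p$, the wrap-around term vanishes, your count becomes exact, the dichotomy is valid, and the large-step branch genuinely confines $p$ to a short list (this is exactly the paper's case ``$\beta\ge 1$, $|P_2|\ge 4$'', where $d-2\le 2$ forces $p\le 15$). Your closing remark about tracking the involution $q\leftrightarrow q^{-1}$ gestures at this symmetry, but as written the dichotomy is applied before any size normalization, where it is false. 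A further minor slip: when $|P_1|=3$ with $0$ as the middle term, no two progression-adjacent terms of $P_1$ lie in $C$, so that step only yields $(2q)^{\ast}\le m-1$ rather than $q^{\ast}\le m-1$.
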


\begin{proof}
Without loss of generality, we can suppose that $|P_1| \ge |P_2|$. Since $|P_1|+|P_2|\ge p-1$, one has $|P_1| \ge (p-1)/2$.
Up to a nonzero multiplicative constant, we can also assume that $P_1$ has difference $1$.
Now, we set 
$$P_2=\{-\beta d,\dots,-d,0,d,\dots,\alpha d\},$$
where $\alpha$ and $\beta$ are two nonnegative integers.
Note that replacing $(d,\alpha,\beta)$ by $(-d,\beta,\alpha)$ yields the same arithmetic progression, so we may assume that $d \in [2,(p-1)/2]$. 
In addition, $P_1$ and $P_2$ satisfy the hypothesis of the lemma if and only if $-P_1$ and $-P_2$ do so, which allows us to suppose that $d \in P_2$, that is $\alpha \ge 1$.
Since $P_1\cap P_2=\{0\}$, we obtain that  
$$P_1 = \{p-(|P_1|-(t+1)),\dots,p-1,0,1,\dots,t\},$$
where $0 \le t < d$. 

We consider first the case where $\beta = 0$.
Then, $P_2=\{0,d,\dots,\alpha d\}$ and $\alpha \ge 2$.
Also, there exists a unique pair of integers $(q,u)$ such that
$$p-(|P_1|-(t+1))=qd+u \quad \text{ and } \quad 1 \le u \le d.$$
Since $P_1\cap P_2=\{0\}$, we have $qd+u \ge \alpha d+1$. 
Now, counting separately the missing elements of $P_1 \cup P_2$ in the three intervals $[0,d]$, $[d+1,\alpha d-1]$ and $[\alpha d,p-1]$, we obtain that
$$|\mathbb{F}_p \setminus (P_1\cup P_2)|=((d-1)-t)+(\alpha-1)(d-1)+((qd+u)-(\alpha d +1)).$$
Since $\alpha,d \ge 2$, one has $(\alpha-1)(d-1) \ge 1$. 
We now distinguish two cases. 
\begin{itemize}
\item In the case $|P_1|+|P_2|=p$, one has $|P_1\cup P_2|=p-1$, so that $\alpha=d=q=2$ and $u=t=1$. 
This is the structure of case $(1)$ in the statement of the lemma.
\item In the case $|P_1|+|P_2|=p-1$, one has $|P_1\cup P_2|=p-2$, so that $(qd+u)-(\alpha d +1) \le 1$, which implies $\alpha=q$.
\begin{itemize}
\item If $q=2$ and $d=2$, one has either $u=1$ and $t=0$, this is case $(3)$, or $u=2$ and $t=1$, this is case $(4)$.
\item If $q=3$ and $d=2$, one has $u=1$ and $t=1$, this is case $(5)$.
\item If $q=2$ and $d=3$, one has $u=1$ and $t=2$, this is case $(6)$.
\end{itemize}
\end{itemize}

We now turn to the case where $\beta \ge 1$. 
Since $P_1 \subset [-d+1,d-1]$, one has 
$$\frac{p-1}{2} \le |P_1| \le 2d-1.$$
\begin{itemize}
\item If $|P_2|=3$, then either $|P_1|=p-3$ and $d=(p-1)/2$, which is case (2), or $|P_1|=p-4$ and $d \in \{(p-3)/2,(p-1)/2\}$, which yields cases $(7),(8),(9)$.
\item If $|P_2| \ge 4$, then $\alpha \ge 2$ or $\beta \ge 2$.
Therefore, one of the two intervals $[d+1,2d-1]$ and $[-2d+1,-d-1]$ has to be disjoint from $P_1$.
Such an interval can contain at most one element of $P_2$, and thus contains at least $d-2$ elements being neither in $P_1$ nor in $P_2$.
Since $|P_1|+|P_2|\ge p-1$ and $|P_1\cap P_2|=1$, one has $|P_1\cup P_2|\ge p-2$ and then $d-2\le 2$.
\begin{itemize}
\item If $d=2$, then $4 \le |P_2| \le |P_1| \le 3$, a contradiction.
\item If $d=3$, then $p \le 11$ and $4 \le |P_2| \le |P_1| \le 5$.
Since $|P_1|+|P_2| \ge 8$, it follows that $p=11$ and $|P_1|=|P_2|=5$, which gives (10.a). 
\item If $d=4$, then $p \le 15$ and $4 \le |P_2| \le |P_1| \le 7$.
Since $|P_1|+|P_2| \ge 8$, it follows that either $p=11$ and (10.b) or (10.c) holds, or $p=13$ and (11) holds.
\end{itemize}
\end{itemize}
\end{proof}

\subsection{Proof of Theorem \ref{main theorem long p-1}}
We can now prove the last of our main theorems.

\begin{proof}[Proof of Theorem \ref{main theorem long p-1}]
Let $A=(a_1,\dots,a_{p-1})$ be a zero-sum sequence of $p-1$ nonzero elements in $\mathbb{F}_p$ not being of the forms given in $(i)$ to $(iv)$.
Thanks to Proposition \ref{degeneratecase l=p-1}, we can assume that $A$ is regular.
For any given sequence $B$ of $p-1$ elements in $\mathbb{F}_p$ such that $\mathcal{S}_A\subset\mathcal{S}_B$,
let $\lambda_1,\dots,\lambda_d$ be the ratios associated with $(A,B)$ and assume $d \ge 2$.
On the one hand, the inequality (\ref{LB}) implies that $\sum_{i=1}^d(|\Sigma_i|-1) \ge p-1$. 
On the other hand, it follows from Theorem \ref{prop1} that $\sum_{i=1}^d(|\Sigma_i|-1) \le p$, which yields
$$\sum_{i=1}^d(|\Sigma_i|-1)\in\{ p-1,p\}.$$

\smallskip
\textbf{Case 1: $d=2$.}

\smallskip
By Lemma \ref{x-x}, one has $0\notin(\Sigma_1\smallsetminus\{0\})+\Sigma_2$ and Cauchy-Davenport Theorem implies that
$$|(\Sigma_1\smallsetminus\{0\})+\Sigma_2| \ge (|\Sigma_1|-1)+(|\Sigma_2|-1) \ge |S_1|+|S_2|=p-1.$$
Therefore, one has $|\Sigma_i|=|S_i|+1$ for each $i \in \{1,2\}$. 
Since $|\Sigma_i| < p$ by Lemma \ref{x-x}, it follows from Lemma \ref{Vosper} that there exists $r_i \in \mathbb{F}^*_p$ such that all elements of $S_i$ are equal to $r_i$ or $-r_i$.
In addition, since $A$ is a zero-sum sequence, Lemma \ref{x-x} yields $\sigma(S_1)=\sigma(S_2)=0$, so that $r_i$ occurs as many times as $-r_i$ in $S_i$.

Now, for each $i \in \{1,2\}$, consider the subsequence $S'_i$ of $S_i$ which contains only the occurrences of $r_i$, and set $\Sigma'_i=\Sigma(S'_i)$. 
We clearly have that
$$|S'_1|+|S'_2|=\frac{p-1}{2} \quad \text{ and } \quad |\Sigma'_1|+|\Sigma'_2|=\frac{p+3}{2}.$$
Suppose that there exist $(x_1,y_1)\in (\Sigma'_1)^2$ and $(x_2,y_2)\in (\Sigma'_2)^2$ such that $x_1+x_2=y_1+y_2$. 
Then, one has $x_1-y_1=y_2-x_2$, where $x_1-y_1\in \Sigma_1$ and $x_2-y_2\in \Sigma_2$. 
Thus, Lemma \ref{x-x} implies that $x_1=y_1$ and $x_2=y_2$. 
This proves that all elements of $\Sigma'_1+\Sigma'_2$ have a unique representation 
as a sum $x_1+x_2$, where $(x_1,x_2) \in  \Sigma'_1 \times \Sigma'_2$, so that
$$|\Sigma'_1+\Sigma'_2|=|\Sigma'_1||\Sigma'_2|=|\Sigma'_1|\left(\frac{p+3}{2}-|\Sigma'_1|\right).$$

If $2 < |\Sigma'_1| < \frac{p-1}{2}$, one has $|\Sigma'_1+\Sigma'_2|>2\left(\frac{p-1}{2}\right)=p-1$. 
In addition, since $p$ is prime, one cannot have $|\Sigma'_1+\Sigma'_2|=|\Sigma'_1||\Sigma'_2|=p$ either.
Therefore, one of the two sets, say $\Sigma'_1$, has cardinality $2$.
Then $|S'_1|=1$ and $S_1=(r_1,-r_1)$.
Now, since $A$ is regular, there is a subsequence $T_2$ of $S_2$ such that  $\sigma(S'_1)+\sigma(T_2)=0$.
It follows from Lemma \ref{x-x} that $\sigma(S'_1)=r_1=0$, which is a contradiction.

\smallskip
\textbf{Case 2:} $d \ge 3$ and $\sum_{i=1}^d(|\Sigma_i|-1)=p$.

\smallskip
On the one hand, since $d \ge 3$ and $A$ is regular, Lemmas \ref{x-x} and \ref{thecut} imply that $|\Sigma_i| < p-1$ for all $i \in [1,d]$. 
On the other hand, there is $k \in [1,d]$ such that $|\Sigma_i|=|S_i|+1$ for all $i \neq k$. 
Using Lemma \ref{Vosper}, we obtain that for every $i \neq k$, there exists $r_i\in \mathbb{F}^*_p$ such that all elements of $S_i$ are equal to $r_i$ or $-r_i$.

Now, for any two distinct elements $i_0, j_0 \in [1,d]$ with $i_0 \neq k$, and any $r_{i_0} \in S_{i_0}$,
let us consider the sets $\Sigma'_i$ defined by
$$\Sigma'_{i_0}=\Sigma(S_{i_0}\smallsetminus(r_{i_0})),\ \Sigma'_{j_0}=\Sigma(S_{j_0}\cup(r_{i_0}))=\Sigma_{j_0}+\{0,r_{i_0}\}$$
and $\Sigma'_i=\Sigma_i$ otherwise. Let also $Q_{i_0,j_0}$ be as in Proposition \ref{pushpull}.

Then, one has $|\Sigma'_{i_0}|=|\Sigma_{i_0}|-1$.  
In addition, since $A$ is regular, Lemmas \ref{x-x} and \ref{thecut} imply that $\pm r_{i_0} \notin S_{j_0}$. 
It follows from Theorem \ref{ThB} that $|\Sigma'_{j_0}|\ge|\Sigma_{j_0}|+2$.

We now define $(t_i)_{i \in [1,d]}$ by  $t_{i_0}=|\Sigma_{i_0}|-2$, $t_{j_0}=|\Sigma_{j_0}|+1$ and $t_i=|\Sigma_i|-1$ otherwise.
In particular, $t_i \le |\Sigma'_i|-1 < p$ for all $i \in [1,d]$ and $\sum^d_{i=1}t_i=p+1$.
By Theorem \ref{prop1}, we have that $\sum_{i=1}^d\lambda_i(|\Sigma_i|-1)=0$ in $\mathbb{F}_p$. 
Thus, Proposition \ref{pushpull} implies that, up to a nonzero multiplicative constant, the coefficient of $\prod_{i=1}^d X_i^{t_i}$ in $Q_{i_0,j_0}$ is

\begin{align*}
\left(\sum_{i=1}^d\lambda_it_i\right)^2-\left(\sum_{i=1}^d\lambda^2_it_i\right) = & (2\lambda_{j_0}-\lambda_{i_0})^2-\left(2\lambda_{j_0}^2-\lambda_{i_0}^2+\sum_{i=1}^d\lambda_i^2(|\Sigma_i|-1)\right)\\
= & (2\lambda_{j_0}^2-4\lambda_{i_0}\lambda_{j_0}+2\lambda_{i_0}^2)-\sum_{i=1}^d\lambda_i^2(|\Sigma_i|-1)\\
= & 2(\lambda_{j_0}-\lambda_{i_0})^2-\sum_{i=1}^d\lambda_i^2(|\Sigma_i|-1).
\end{align*}
Note that $\sum_{i=1}^d\lambda_i^2(|\Sigma_i|-1)$ is independent of $i_0$ and $j_0$. 
In addition, the coefficient of $\prod_{i=1}^d X_i^{t_i}$ in $Q_{i_0,j_0}$ is zero if and only if 
$$(\lambda_{j_0}-\lambda_{i_0})^2=\frac{1}{2}\sum_{i=1}^d\lambda_i^2(|\Sigma_i|-1).$$
Since $d\ge 3$, there is at least one pair $(i_0,j_0)$ such that this equality does not hold. 
Since $i_0$ and $j_0$ play symmetric roles, $i_0$ can be chosen such that $i_0 \neq k$ indeed.
It follows from Theorem \ref{Combnull} that for this actual pair, $Q_{i_0,j_0}$ cannot vanish on $\prod_{i=1}^d \Sigma'_i$, which contradicts Proposition \ref{pushpull}.

\smallskip
\textbf{Case 3:} $d \ge 3$ and $\sum_{i=1}^d(|\Sigma_i|-1)=p-1$.

\smallskip
Since $A$ is regular, Lemmas \ref{x-x} and \ref{thecut} give $|\Sigma_i|<p-1$ for all $i \in [1,d]$. 
It follows that $|\Sigma_i|=|S_i|+1$ for all $i \in [1,d]$, and Lemma \ref{Vosper} implies that there exists $r_i\in \mathbb{F}^*_p$ such that all elements of $S_i$ are equal to $r_i$ or $-r_i$.
Note, in particular, that one has $\pm r_i \neq r_j$ for any two distinct elements $i,j \in [1,d]$.

In addition, if one of the ratio sets, say $\Sigma_1$, has cardinality $p-2$, then $d=3$ and $|S_2|=|S_3|=1$.
Since $A$ is regular, there would be a subsequence $T_1$ of $S_1$ such that  $\sigma(T_1)+\sigma(S_2)=0$. 
By Lemma \ref{x-x}, we would have $\sigma(S_2)=0$, a contradiction. 
This proves that $|\Sigma_i|<p-2$ for all $i \in [1,d]$.

Now, for any two distinct elements $i_0, j_0 \in [1,d]$ and any $r_{i_0} \in S_{i_0}$ such that $|S_{j_0}| > 1$ and $\pm \frac{1}{2}r_{i_0} \notin S_{j_0}$,
let us consider the sets $\Sigma'_i$ defined by
$$\Sigma'_{i_0}=\Sigma(S_{i_0}\smallsetminus(r_{i_0})),\ \Sigma'_{j_0}=\Sigma(S_{j_0}\cup(r_{i_0}))=\Sigma_{j_0}+\{0,r_{i_0}\}$$
and $\Sigma'_i=\Sigma_i$ otherwise. 
Let also $Q_{i_0,j_0}$ be as in Proposition \ref{pushpull}.

Then, one has $|\Sigma'_{i_0}|=|\Sigma_{i_0}|-1$.  
In addition, since $|S_{j_0}| > 1$ and neither $\pm r_{i_0}$ nor $\pm \frac{1}{2}r_{i_0}$ are elements of $S_{j_0}$, it follows from Lemma \ref{theinv} that $|\Sigma'_{j_0}|\ge|\Sigma_{j_0}|+3$.

We now define $(t_i)_{i \in [1,d]}$ by  $t_{i_0}=|\Sigma_{i_0}|-2$, $t_{j_0}=|\Sigma_{j_0}|+2$ and $t_i=|\Sigma_i|-1$ otherwise.
In particular, $t_i \le |\Sigma'_i|-1 < p$ for all $i \in [1,d]$ and $\sum^d_{i=1}t_i=p+1$.
Proposition \ref{pushpull} implies that, up to a nonzero multiplicative constant, the coefficient of $\prod_{i=1}^d X_i^{t_i}$ in $Q_{i_0,j_0}$ is

\begin{align*}
 & \left(\sum_{i=1}^d\lambda_it_i\right)^2-\left(\sum_{i=1}^d\lambda^2_it_i\right)\\
 = &\left (\sum_{i=1}^d\lambda_i(|\Sigma_i|-1)+3\lambda_{j_0}-\lambda_{i_0}\right)^2-\left(\sum_{i=1}^d\lambda_i^2(|\Sigma_i|-1)+3\lambda_{j_0}^2-\lambda_{i_0}^2\right)\\
= & (2\lambda_{i_0}^2-6\lambda_{i_0}\lambda_{j_0}+6\lambda_{i_0}^2)+2\left(\sum_{i=1}^d\lambda_i(|\Sigma_i|-1)\right)(3\lambda_{j_0}-\lambda_{i_0})\\
 & \hspace{4cm}+\left(\sum_{i=1}^d\lambda_i(|\Sigma_i|-1)\right)^2-\left(\sum_{i=1}^d\lambda_i^2(|\Sigma_i|-1)\right).
\end{align*}
Since $\sum_{i=1}^d\lambda_i(|\Sigma_i|-1)$ and $\sum_{i=1}^d\lambda_i^2(|\Sigma_i|-1)$ are independent of $i_0$ and $j_0$, this last expression is a quadratic polynomial $f$ in $\lambda_{i_0}$ and $\lambda_{j_0}$. 
In addition, the coefficient of $\prod_{i=1}^d X_i^{t_i}$ in $Q_{i_0,j_0}$ is zero if and only if $f(\lambda_{i_0},\lambda_{j_0})=0$.

\smallskip
\textbf{Case 3.1:} If $d\ge 5$, then choose any $j_0 \in [1,d]$ such that $|S_{j_0}|>1$. 
On the one hand, there are at most two ratios $\lambda_i$ such that $f(\lambda_i,\lambda_{j_0})=0$. 
On the other hand, there is at most one set $S_i$ such that $\pm \frac{1}{2}r_{i} \in S_{j_0}$.
Since $d\ge 5$, the pigeonhole principle implies that there is at least one pair $(i_0,j_0)$ such that $f(\lambda_{i_0},\lambda_{j_0}) \neq 0$ and $\pm \frac{1}{2}r_{i_0} \notin S_{j_0}$.
It follows from Theorem \ref{Combnull} that for this actual pair, $Q_{i_0,j_0}$ cannot vanish on $\prod_{i=1}^d \Sigma'_i$, which contradicts Proposition \ref{pushpull}.

\smallskip
\textbf{Case 3.2:} If $d\in\{3,4\}$ and there are at least three sets $S_i$, such that $|S_i|>1$,
then without loss of generality, we may assume that $|S_i| > 1$ for each $i \in \{1,2,3\}$. 
In particular, $p \ge 7$. 
Now, by Lemma \ref{conic}, one of the following holds.

\smallskip
\begin{itemize}
\item 
If $f(\lambda_1,\lambda_2) \, f(\lambda_1,\lambda_3)\neq 0$, then there is $j_0 \in \{2,3\}$ such that $\pm\frac{1}{2}r_1 \notin S_{j_0}$. 
Therefore, setting $i_0=1$, we have $f(\lambda_{i_0},\lambda_{j_0}) \neq 0$ and $\pm \frac{1}{2}r_{i_0} \notin S_{j_0}$ indeed. 

\smallskip
\item 
If $f(\lambda_2,\lambda_1) \, f(\lambda_3,\lambda_1)\neq 0$, then there is $i_0 \in \{2,3\}$ such that $\pm\frac{1}{2}r_{i_0} \notin S_1$.
Therefore, setting $j_0=1$, we have $f(\lambda_{i_0},\lambda_{j_0}) \neq 0$ and $\pm \frac{1}{2}r_{i_0} \notin S_{j_0}$ indeed.

\smallskip
\item  
If $f(\lambda_1,\lambda_2) \, f(\lambda_2,\lambda_1)\neq 0$, then $p \ge 7$ implies that one cannot have both $r_1=\pm 2r_2$ and $r_2=\pm 2r_1$. 
Thus, there are two distinct elements $i_0,j_0 \in \{1,2\}$ such that $f(\lambda_{i_0},\lambda_{j_0}) \neq 0$ and $\pm \frac{1}{2}r_{i_0} \notin S_{j_0}$ indeed.

\smallskip
\item 
If $f(\lambda_1,\lambda_2) \, f(\lambda_2,\lambda_3) \, f(\lambda_3,\lambda_1)\neq 0$, then $p \ge 7$ implies that the three equalities $r_1=\pm 2r_2$ and $r_2=\pm 2r_3$ and $r_3=\pm 2r_1$ can occur only when $r_i=\pm 8r_i$, so that $p=7$. 
Otherwise, there are two distinct elements $i_0,j_0 \in \{1,2,3\}$ such that $f(\lambda_{i_0},\lambda_{j_0}) \neq 0$ and $\pm \frac{1}{2}r_{i_0} \notin S_{j_0}$ indeed.
\end{itemize}

It follows that whenever $p \neq 7$, there is at least one pair $(i_0,j_0)$ such that $f(\lambda_{i_0},\lambda_{j_0}) \neq 0$ and $\pm \frac{1}{2}r_{i_0} \notin S_{j_0}$.
Theorem \ref{Combnull} then implies that for this actual pair, $Q_{i_0,j_0}$ cannot vanish on $\prod_{i=1}^d \Sigma'_i$, which contradicts Proposition \ref{pushpull}.

\smallskip
The only remaining case to consider is $p=7$, $d=3$ and $|S_i|=2$ for each $i \in \{1,2,3\}$. 
Therefore, by relabelling if necessary, there are only two possible cases. 
\begin{itemize}
\item $\Sigma_1=\{0,1,2\}$, $\Sigma_2=\{0,2,4\}$, $\Sigma_3=\{0,4,1\}$, which gives $S_1=(1,1)$, $S_2=(2,2)$ and $S_3=(-3,-3)$. 
It can easily be checked that such a sequence $A$ satisfies $\dim(A)=p-2$, so that $d=1$, a contradiction. 
\item $\Sigma_1=\{-1,0,1\}$, $\Sigma_2=\{-2,0,2\}$, $\Sigma_3=\{-3,0,3\}$, which gives $S_1=(-1,1)$, $S_2=(-2,2)$ and $S_3=(-3,3)$. 
Therefore, by relabelling if necessary, one has $A=(-1,1,-2,2,-3,3 )$. 
The following table then gives a basis of $\langle \mathcal{S}_A \rangle$.

$$\begin{array}{|cc|cc|cc|}
\hline
 -1 & 1 & -2 & 2 & -3 & 3 \\
\hline
1 &  1   &  0  & 0 &  0  & 0  \\
 0 & 0 & 1  & 1  & 0 & 0 \\
 0  & 0 & 0 & 0  & 1 & 1 \\
 1 & 0 & 1 & 0 & 0 & 1 \\
\hline
\end{array}$$
\end{itemize}

\smallskip
\textbf{Case 3.3: } If $d\in\{3,4\}$ and there are exactly two sets $S_i$ such that $|S_i|>1$, 
then without loss of generality, we can suppose that these two sets are $S_1$ and $S_2$, where $|S_1| \le |S_2|$. 
Therefore $\Sigma_1$ and $\Sigma_2$ are two arithmetic progressions such that $|\Sigma_1|+|\Sigma_2|=p+3-d$. 
In addition, it follows from Lemma \ref{x-x} that $\Sigma_1$ and $-\Sigma_2$ satisfy the hypothesis of Lemma \ref{P1P2}.
We thus consider the following cases. 

\smallskip
If $d=3$, then $p \ge 7$ and since $\sigma(S_1)+\sigma(S_2)+\sigma(S_3)=0$, Lemma \ref{x-x} implies that $\sigma(S_1)$ and $\sigma(S_2)$ are nonzero. 
Therefore, Lemma \ref{P1P2} implies there exists $r \in \mathbb{F}^*_p$ such that $\Sigma_1 = \{0,2,4\}.r$ and $\Sigma_2=\{-1,0,1,\dots,p-5\}.r$. 
On the one hand, this gives $\sigma(S_1)+\sigma(S_2)=-2r$. 
On the other hand, the single element of $S_3$ has to be in $\{3,4\}.r$, so that $\sigma(A) \neq 0$, a contradiction.

\smallskip
If $d=4$, then $p \ge 11$ and since $\sigma(S_1)+\sigma(S_2)+\sigma(S_3)+\sigma(S_4)=0$, Lemma \ref{x-x} implies that either $\sigma(S_1)$ or $\sigma(S_2)$ is nonzero. 
Therefore, Lemma \ref{P1P2} implies there exists $r \in \mathbb{F}^*_p$ such that one of the following cases holds. 

\begin{itemize}
\item $\Sigma_1=\{0,2,4\}.r$ and $\Sigma_2=\{0,1,\dots,p-5\}.r$. 
Then, $\sigma(S_1)+\sigma(S_2)=-r$ and the elements of $S_3$ and $S_4$ have to be in $\{3,4\}.r$. 
Therefore, one has $\sigma(A) \neq 0$, which is a contradiction.
\item $\Sigma_1=\{0,2,4\}.r$ and $\Sigma_2=\{-1,0,1,\dots,p-6\}.r$. 
Then, $\sigma(S_1)+\sigma(S_2)=-3r$ and the elements of $S_3$ and $S_4$ have to be in $\{3,4,5\}.r$.
Therefore, one has $\sigma(A) \neq 0$, which is a contradiction.
\item $\Sigma_1=\{0,2,4,6\}.r$ and $\Sigma_2=\{-1,0,1,\dots,p-7\}.r$. 
Then, $\sigma(S_1)+\sigma(S_2)=-2r$ and the elements of $S_3$ and $S_4$ have to be in $\{3,4,6\}.r$ when $p=11$, and in $\{3,4,5,6\}.r$ whenever $p\ge 13$.
In all cases, one has $\sigma(A) \neq 0$, which is a contradiction.
\item $\Sigma_1=\{0,3,6\}.r$ and $\Sigma_2=\{-2,-1,0,\dots,p-7\}.r$. 
Then, $\sigma(S_1)+\sigma(S_2)=-3r$ and the elements of $S_3$ and $S_4$ have to be in $\{4,6\}.r$ when $p=11$, and in $\{4,5,6\}.r$ whenever $p\ge 13$.
In all cases, one has $\sigma(A) \neq 0$, which is a contradiction.
\item 
$\Sigma_1=\{-\frac{p-1}{2},0,\frac{p-1}{2}\}.r$ and $\Sigma_2=\{-\frac{p-3}{2},\dots,-1,0,1,\dots,\frac{p-7}{2}\}.r$
Then, $\sigma(S_1)+\sigma(S_2)=-2r$ 
and the elements of $S_3$ and $S_4$ have to be in $\{\frac{p+3}{2},\frac{p+5}{2}\}.r$.
Therefore, one has $\sigma(A) \neq 0$, which is a contradiction.
\item $p=11$,
$\Sigma_1=\{-4,0,4,8\}.r$ and $\Sigma_2=\{-2,-1,0,1,2,3\}.r$. 
Then, the elements of $S_3$ and $S_4$ have to be in $\{-5,5\}.r$, which would contradict Lemma \ref{x-x}.
\end{itemize}

\smallskip
\textbf{Case 3.4: } If $d\in\{3,4\}$ and there is only one set $S_i$ such that $|S_i|>1$, then without loss of generality, we can suppose that this set is $S_1$. 
If $d=3$, and since $A$ is regular, there is a subsequence $T_1$ of $S_1$ such that  $\sigma(T_1)+\sigma(S_2)=0$. 
It follows from Lemma \ref{x-x} that $\sigma(S_2)=0$, which is a contradiction.
Thus, $d=4$ and Lemma \ref{x-x} implies that, by relabelling if necessary, there exist $t \in [0,p-4]$ and $r \in \mathbb{F}^*_p$ such that
\[\Sigma_1=\{-(p-4-t),\dots,-1,0,1,\dots,t\}.r,\]
\[\Sigma_2=\{0,-(t+1)\}.r,\ \Sigma_3=\{0,-(t+2)\}.r,\ \Sigma_4=\{0,-(t+3)\}.r.\]
Since $t \in [0,p-4]$, one has $\sigma(A)=-(t+2)r \neq 0$, which contradicts the fact that $A$ is a zero-sum sequence. 
\end{proof}

\begin{proof}[Proof of Corollary \ref{reconstruction long p-1}]
Let $A,B$ be two sequences of $p-1$ nonzero elements in $\mathbb{F}_p$.
It is easily checked that $\mathcal{S}_A=\mathcal{S}_B$ whenever $A$ and $B$ are collinear, or have one of the forms given in $(i)$, $(ii)$ and $(iii)$. 
Conversely, suppose that $A,B$ are such that $\mathcal{S}_A=\mathcal{S}_B$. 
If $\sigma(A)\neq 0$ then Lemma \ref{reconstruction of non zero-sum sequences} and Corollary \ref{reconstruction l=p} imply that $A$ and $B$ are collinear.
If $\sigma(A)=0$ then, using Theorem  \ref{main theorem long p-1}, there are six cases to consider. 

\smallskip
$\bullet$ If $\dim(A)=p-2$ then $\mathcal{S}_A \subset \mathcal{S}_B$ already implies that $A$ and $B$ are collinear.

\smallskip
$\bullet$ If $\dim(A)=1$, then $\dim(B)=1$ and it is easily seen that there exist $\lambda,r \in \mathbb{F}^*_p$ such that $A$ (resp. $B$) consists of $p-2$ copies of $r$ (resp. $\lambda r$) and one copy of $2r$ (resp. $2\lambda r$). Therefore, $A$ and $B$ either are collinear or have the form given in $(i)$. 

\smallskip
$\bullet$ If $p=7$ and, by relabelling if necessary, one has 
$$A=(-1,1,-2,2,-3,3),$$ 
then it can be checked by hand that $\mathcal{S}_A = \mathcal{S}_B$ if and only there is $\lambda \in \mathbb{F}^*_p$ such that 
$$B=(-\lambda,\lambda,-2\lambda,2\lambda,-3\lambda,3\lambda) \quad \text{ or } \quad B=(-\lambda,\lambda,3\lambda,-3\lambda,2\lambda,-2\lambda).$$
Therefore, $A$ and $B$ either are collinear or have the form given in $(iii)$.

\smallskip
$\bullet$ If $\dim(A)=p-3$ and there exist $t\in[1,p-4],r \in \mathbb{F}^*_p$ such that, by relabelling if necessary, one has
$$A=(\underbrace{r,\dots,r}_{t},\underbrace{-r,\dots,-r}_{p-3-t},-(t+1)r,-(t+2)r),$$
then $\mathcal{S}_A \subset \mathcal{S}_B$ implies that
$$B=(\underbrace{\lambda r,\dots,\lambda r}_{t},\underbrace{-\lambda r,\dots,-\lambda r}_{p-3-t},-(t+1)\lambda r+d,-(t+2)\lambda r-d)$$
for some $\lambda \in \mathbb{F}^*_p$ and $d \in \mathbb{F}_p$. 
By Theorem \ref{main theorem long p-1}, the equality $\dim(B)=p-3$ holds only when $d=0$, that is $A$ and $B$ are collinear, or $d=-\lambda r$, which gives case $(ii)$. 

\smallskip
$\bullet$ If $\dim(A)=p-3$ and there exist $t\in[0,p-6],r \in \mathbb{F}^*_p$ such that, by relabelling if necessary, one has
$$A=(\underbrace{r,\dots,r}_{t},\underbrace{-r,\dots,-r}_{p-4-t},2r,-(t+3)r,-(t+3)r),$$
then $\mathcal{S}_A \subset \mathcal{S}_B$ implies that
$$B=(\underbrace{\lambda r,\dots,\lambda r}_{t},\underbrace{-\lambda r,\dots,-\lambda r}_{p-4-t},2\lambda r,-(t+3)\lambda r+d,-(t+3)\lambda r-d)$$
for some $\lambda \in \mathbb{F}^*_p$ and $d \in \mathbb{F}_p$. 
By Theorem \ref{main theorem long p-1}, the equality $\dim(B)=p-3$ holds only when $d=0$, that is $A$ and $B$ are collinear. 

\smallskip
$\bullet$ If $\dim(A)=p-4$, then there exist $r \in \mathbb{F}^*_p$ such that, by relabelling if necessary, one has
$$A=(\underbrace{r,\dots,r}_{p-5},-r,2r,2r,2r).$$
Now, either one has $p=5$, which brings us back to the case $\dim(A)=1$, or $p \ge 7$ and $\mathcal{S}_A \subset \mathcal{S}_B$ implies that
$$B=(\underbrace{\lambda r,\dots,\lambda r}_{p-5},-\lambda r,2\lambda r+a,2\lambda r+b,2\lambda r-(a+b))$$
for some $\lambda \in \mathbb{F}^*_p$ and $a,b \in \mathbb{F}_p$. 
Then, by Theorem \ref{main theorem long p-1}, the equality $\dim(B) = p-4$ holds only when $a=b=0$, so that $A$ and $B$ are collinear. 
\end{proof}

\begin{proof}[Proof of Corollary \ref{long p-1 minimales}]
Let $A$ be a sequence of $p-1$ nonzero elements in $\mathbb{F}_p$ not being of the forms given in $(i)$ and $(ii)$.
If $\sigma(A) \neq 0$, then Lemma \ref{dim of non zero-sum sequences} and Theorem \ref{main theorem long p} readily imply that $\dim(A) \ge p-3$.
Then, the desired result follows directly from Proposition \ref{mimimini}.
If $\sigma(A)= 0$ then, using Theorem \ref{main theorem long p-1}, there are only two cases to consider. 
If $\dim(A) \ge p-3$, then Proposition \ref{mimimini} implies that the number of minimal elements in $\mathcal{S}_A$ is at least $p-3$.
If $\dim(A)=p-4$, then there exist $r \in \mathbb{F}^*_p$ such that $A$ consists of $p-5$ copies of $r$, one copy of $-r$ and three copies of $2r$.
Now, either one has $p=5$ and $A$ has the form given in $(ii)$, or $p \ge 7$ and the number of minimal elements in $\mathcal{S}_A$ is $2(p-5) \ge p-3$. 
\end{proof}

\section{A concluding remark}
\label{section : conclusion}

Let $p$ be a prime and let $A=(a_1,\dots,a_\ell)$ be a sequence of $\ell \ge 1$ nonzero elements in $\mathbb{F}_p$.
In this paper, we proved that for every $\ell \ge p-1$, the equality $\dim(A)=\ell-1$ holds except for a very limited number of exceptional sequences which can be fully determined.
However, our results can easily be extended to the affine setting.

\smallskip
Given any element $\alpha \in \mathbb{F}_p$, let $\mathcal{S}^\alpha_A$ be the set of all $0$-$1$ solutions to the equation
$$a_1x_1 + \dots + a_\ell x_\ell = \alpha,$$
and let 
$$\dim(A,\alpha)=\dim(\mathrm{aff}(\mathcal{S}^\alpha_A))$$
be the dimension of the affine hull of $\mathcal{S}_A^\alpha$.

\smallskip
For any $I \subset [1,\ell]$, we consider the sequence $A_I=(a'_1,\dots,a'_\ell)$ defined by $a'_i=-a_i$ if $i\in I$ and $a'_i=a_i$ otherwise.
Whenever $\ell\ge p-1$, the following lemma shows that there is an affine transformation mapping $\mathcal{S}^\alpha_A$ onto $\mathcal{S}_{A_I}$, for a particular $I \subset [1,\ell]$.
Therefore, our results provide a full description of the dimension and structure of the sets $\mathcal{S}^\alpha_A$, for all $\alpha \in \mathbb{F}_p$. 

\begin{lem}
Let $p$ be a prime and let $A=(a_1,\dots,a_\ell)$ be a sequence of $\ell \ge p-1$ nonzero elements in $\mathbb{F}_p$. 
Then, for every $\alpha \in \mathbb{F}_p$, there exists $I\subset [1,\ell]$ such that
$$\dim(A,\alpha)=\dim(A_I).$$
\end{lem}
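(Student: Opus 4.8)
The plan is to realize the hint preceding the statement: for a suitable $I$, exhibit an explicit affine bijection of $\mathbb{F}^\ell_p$ carrying $\mathcal{S}^\alpha_A$ onto $\mathcal{S}_{A_I}=\mathcal{S}^0_{A_I}$, and then transport the dimension. First I would choose $I$ so that $\alpha$ is the subsum $\sum_{i\in I}a_i$. Such an $I$ exists precisely when $\alpha\in\Sigma(A)$, and since $\ell\ge p-1$, the Cauchy--Davenport Theorem applied to the sets $\{0,a_i\}$ gives $|\Sigma(A)|\ge\min\{p,\ell+1\}=p$, so that $\Sigma(A)=\mathbb{F}_p$. Hence for every $\alpha\in\mathbb{F}_p$ there is indeed $I\subset[1,\ell]$ with $\sum_{i\in I}a_i=\alpha$; fix one such $I$.

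Next I would introduce the coordinate-flip map $\varphi\colon\mathbb{F}^\ell_p\to\mathbb{F}^\ell_p$ defined by $\varphi(x)_i=1-x_i$ for $i\in I$ and $\varphi(x)_i=x_i$ otherwise. This is an affine involution of $\mathbb{F}^\ell_p$ which restricts to a bijection of the cube $\{0,1\}^\ell$. Writing $y=\varphi(x)$ and substituting $x_i=1-y_i$ for $i\in I$, one computes
$$\sum_{i=1}^\ell a_i x_i=\sum_{i\notin I}a_i y_i+\sum_{i\in I}a_i(1-y_i)=\sum_{i=1}^\ell a'_i y_i+\sum_{i\in I}a_i,$$
where $a'_i$ denotes the $i$-th term of $A_I$. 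Since $\sum_{i\in I}a_i=\alpha$, the condition $\sum_i a_ix_i=\alpha$ is equivalent to $\sum_i a'_i y_i=0$, that is, $x\in\mathcal{S}^\alpha_A$ if and only if $\varphi(x)\in\mathcal{S}_{A_I}$. As $\varphi$ is a bijection, it maps $\mathcal{S}^\alpha_A$ bijectively onto $\mathcal{S}_{A_I}$; note that $\mathcal{S}^\alpha_A$ is nonempty, as it contains the indicator vector of $I$.

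Finally, since $\varphi$ is an affine bijection of $\mathbb{F}^\ell_p$, it sends the affine hull of $\mathcal{S}^\alpha_A$ onto the affine hull of $\mathcal{S}_{A_I}$ and preserves its dimension, so $\dim(A,\alpha)=\dim(\mathrm{aff}(\mathcal{S}_{A_I}))$. To conclude I would observe that the zero vector lies in $\mathcal{S}_{A_I}$, and that the affine hull of a set containing $0$ coincides with its linear span: any linear combination $\sum c_i s_i$ equals the affine combination $\sum c_i s_i+(1-\sum c_i)\cdot 0$. Therefore $\mathrm{aff}(\mathcal{S}_{A_I})=\langle\mathcal{S}_{A_I}\rangle$ and $\dim(\mathrm{aff}(\mathcal{S}_{A_I}))=\dim(A_I)$, giving $\dim(A,\alpha)=\dim(A_I)$. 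There is no serious obstacle here; the only genuine input is the surjectivity of subsums $\Sigma(A)=\mathbb{F}_p$ for $\ell\ge p-1$, and the remaining care lies in checking the substitution and the elementary identification of the affine hull with the linear span when $0$ is present.
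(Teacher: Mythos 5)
Your proof is correct and follows essentially the same route as the paper: choose $I$ with $\sum_{i\in I}a_i=\alpha$ via Cauchy--Davenport, then show the coordinate-flip involution $y_i=1-x_i$ ($i\in I$), $y_i=x_i$ (otherwise) carries $\mathcal{S}^\alpha_A$ bijectively onto $\mathcal{S}_{A_I}$. Your only addition is to spell out the final bookkeeping (affine bijections preserve affine hulls, and the affine hull equals the linear span since $0\in\mathcal{S}_{A_I}$), which the paper leaves implicit.
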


\begin{proof} 
By Cauchy-Davenport Theorem, one has $|\Sigma(A)|\ge\min\{p,\ell+1\}=p$. 
Thus, there exists $I\subset[1,\ell]$ such that $\sum_{i\in I}a_i=\alpha$.
Now, let $A_I=(a'_1,\dots,a'_\ell)$.
Then, for every $J\subset[1,\ell]$, one has  
$$\sum_{i\in J}a_i =\alpha$$
if and only if 
$$\sum_{i\in J}a_i-\sum_{i\in I}a_i=0$$
which is equivalent to   
$$\sum_{i\in J\smallsetminus I}a_i-\sum_{i\in I\smallsetminus J}a_i=0$$
that is to say  
$$\sum_{i\in I \Delta J}a'_i=0.$$
Therefore, $\mathcal{S}_{A_I}$ is the image of $\mathcal{S}^\alpha_A$ by the affine transformation $(x_1,\dots,x_\ell)\mapsto(y_1,\dots y_\ell)$, where $y_i=1-x_i$ if $i\in I$ and $y_i=x_i$ otherwise.
\end{proof}

\end{document}